\tikzset{curve/.style={settings={#1},to path={(\tikztostart)
    .. controls ($(\tikztostart)!\pv{pos}!(\tikztotarget)!\pv{height}!270:(\tikztotarget)$)
    and ($(\tikztostart)!1-\pv{pos}!(\tikztotarget)!\pv{height}!270:(\tikztotarget)$)
    .. (\tikztotarget)\tikztonodes}},
    settings/.code={\tikzset{quiver/.cd,#1}
        \def\pv##1{\pgfkeysvalueof{/tikz/quiver/##1}}},
    quiver/.cd,pos/.initial=0.35,height/.initial=0}
\tikzset{tail reversed/.code={\pgfsetarrowsstart{tikzcd to}}}
\tikzset{2tail/.code={\pgfsetarrowsstart{Implies[reversed]}}}
\tikzset{2tail reversed/.code={\pgfsetarrowsstart{Implies}}}
\tikzset{no body/.style={/tikz/dash pattern=on 0 off 1mm}}
\newtheorem{theorem}{Theorem}[section]
\newtheorem{proposition}[theorem]{Proposition}
\newtheorem{lemma}[theorem]{Lemma}
\theoremstyle{definition}
\newtheorem{definition}[theorem]{Definition}
\newtheorem{remark}[theorem]{Remark}
\newtheorem{example}[theorem]{Example}
\newcommand{\qedno}{\null\nobreak\hfill\ensuremath{\square}} 
\newcommand{\uxa}{\ensuremath{(\underline{X},\underline{A})}} 
\newcommand{\ux}{\ensuremath{(X,\ast)}}
\newcommand{\cxx}{\ensuremath{(\underline{CX},\underline{X})}}
\newcommand{\clxx}{\ensuremath{(\underline{C\Omega X},\underline{\Omega X})}}
\newcommand{\djk}{\ensuremath{DJ_{K}}}
\newcommand{\zk}{\ensuremath{\mathcal{Z}_{K}}} 
\newcommand{\conn}{\ensuremath{\#}} 
\begin{document}
\title{Loop space decompositions of highly symmetric spaces with applications to polyhedral products}
\author{Lewis Stanton*}

\subjclass[2020]{Primary 55P15, 55P35; Secondary 05C90.}
\keywords{homotopy type, loop space, polyhedral product}

\begin{abstract} 
We generalise the fold map for the wedge sum and use this to give a loop space decomposition of topological spaces with a high degree of symmetry. This is applied to polyhedral products to give a loop space decomposition of polyhedral products associated to families of graphs.\vspace{-3em}
\end{abstract}
\maketitle
\section{Introduction}
\label{sec:intro} 

The motivation for this paper is to study polyhedral products associated to graphs with a high degree of symmetry. This is the first systematic study of polyhedral products associated to graphs. To do this, in the first part of the paper, we prove a general loop space decomposition theorem that holds for topological spaces constructed iteratively via a generalisation of the fold map. This is used as a tool for the main objective of this paper, which is to prove a loop space decomposition of polyhedral products associated to a particular family of graphs known as generalised book graphs. The generalised book graph case also requires a novel result about the naturality of a homotopy equivalence derived from a certain homotopy pushout considered in \cite{GT1}, which is of interest in its own right.

We first provide the necessary setup to state the general loop space decomposition. Let $X$ be a pointed $CW$-complex. The fold map $F:X \vee X \rightarrow X$ on the wedge sum is defined as a pushout map resulting in a commutative diagram \[\begin{tikzcd}
	{*} & X \\
	X & {X \vee X} \\
	&& X.
	\arrow[from=2-2, to=3-3]
	\arrow[from=1-2, to=2-2]
	\arrow[from=2-1, to=2-2]
	\arrow[from=1-1, to=2-1]
	\arrow[from=1-1, to=1-2]
	\arrow[curve={height=-12pt}, Rightarrow, no head, from=1-2, to=3-3]
	\arrow[curve={height=12pt}, Rightarrow, no head, from=2-1, to=3-3]
\end{tikzcd}\] Intuitively, this takes each copy of $X$ in the wedge and folds them onto a single copy of $X$.

In this paper, we generalise the idea of the fold map and determine its homotopy fibre. In particular, for $n \geq 2$ let $X_1,\cdots,X_n$ be homeomorphic, path-connected CW-complexes, and for $1 \leq i < j \leq n$, let $\phi_{i,j}:X_i \rightarrow X_j$ be a homeomorphism. Let $A_1$, $A_2$ be homeomorphic subcomplexes of $X_1$. Also, assume there is an automorphism $\psi$ of $X_1$ such that $\psi(A_1) = A_2$ and $\psi(A_2) = A_1$. Define the space $P_2$ as the pushout \[\begin{tikzcd}
	{A_2} & {X_2} \\
	{X_1} & {P_2}
	\arrow["{f_2}", from=1-1, to=1-2]
	\arrow[from=1-2, to=2-2]
	\arrow[hook, from=1-1, to=2-1]
	\arrow[from=2-1, to=2-2]
\end{tikzcd}\] where $f_2$ is the composite \[A_2 \xrightarrow{\psi|_{A_2}} A_1 \xrightarrow{\phi_{1,2}|_{A_1}} \phi_{1,2}(A_1) \hookrightarrow X_2.\] This corresponds to gluing $X_{1}$ to $X_2$ by gluing the copy of $A_{2}$ in $X_1$ to the copy of $A_{1}$ in $X_2$. Inductively define $P_n$ as the pushout \[\begin{tikzcd}
	{\phi_{1,n-1}(A_{2})} & {X_n} \\
	{P_{n-1}} & {P_n}
	\arrow[from=2-1, to=2-2]
	\arrow[from=1-2, to=2-2]
	\arrow["{f_n}", from=1-1, to=1-2]
	\arrow[hook, from=1-1, to=2-1]
\end{tikzcd}\] where $f_n$ is the composite \[\phi_{1,n-1}(A_{2}) \xrightarrow{\phi_{1,n-1}^{-1}|_{\phi_{1,n-1}(A_2)}} A_2 \xrightarrow{\psi|_{A_2}} A_1 \xrightarrow{\phi_{1,n}|_{A_1}} \phi_{1,n}(A_1) \hookrightarrow X_n.\] This corresponds to gluing the copy of $X_{n-1}$ contained in $P_{n-1}$ to $X_n$ by gluing the copy of $A_{2}$ in $X_{n-1}$ to the copy of $A_{1}$ in $X_n$. We define a \textit{fold map} \[g_n:P_n \rightarrow X_1\] by sending each copy of $X_i$ via the homeomorphism $\phi_{1,i}^{-1}$ into $X_1$. The wedge sum $X \vee X \rightarrow X$ corresponds to letting $n=2$, $A_1 = A_2 = *$, $X_1 = X_2 = X$, $\phi = id_X$ and $\psi = id_X$, where $id_X: X\rightarrow X$ is the identity map. In Section ~\ref{sec:symsimpdecomp}, we decompose the homotopy fibre of $g_n$ and use this to give a loop space decomposition of $P_n$. Specifically, we prove the following result.

\begin{theorem}
\label{symDecomp}

Let $P_n$ be the topological space constructed as above where $n \geq 2$. Let $F_{A_2}$ be the homotopy fibre of the inclusion $A_2 \hookrightarrow X_1$. Then \[\Omega P_n \simeq \Omega X_1 \times \Omega F\] where $F \simeq \bigvee\limits_{i=1}^{n-1} \Sigma F_{A_2}$.
\end{theorem}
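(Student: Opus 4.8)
The plan is to realise $g_n\colon P_n\to X_1$ as a homotopy fibration admitting a section, deduce $\Omega P_n\simeq\Omega X_1\times\Omega F$ with $F$ its homotopy fibre, and then identify $F$ by induction on $n$ using Mather's cube theorem. The main obstacle is a null-homotopy that must hold at each stage of the induction; once it is in hand, the wedge decomposition of $F$ follows formally.

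For the splitting: the composite $X_1\hookrightarrow P_n\xrightarrow{g_n}X_1$ of the inclusion of the first copy with the fold map is the restriction of $g_n$ to $X_1$, which by construction is a self-homeomorphism of $X_1$ (indeed it is intended to be the identity). Precomposing the inclusion with a homotopy inverse of $g_n|_{X_1}$ gives $s\colon X_1\to P_n$ with $g_n\circ s\simeq\mathrm{id}_{X_1}$. Looping the homotopy fibration $F\to P_n\xrightarrow{g_n}X_1$ gives a homotopy fibration $\Omega F\to\Omega P_n\xrightarrow{\Omega g_n}\Omega X_1$ with section $\Omega s$; the map $\Omega X_1\times\Omega F\to\Omega P_n$, $(\alpha,\beta)\mapsto(\Omega s)(\alpha)\cdot\iota(\beta)$, where $\iota\colon\Omega F\to\Omega P_n$ is the fibre inclusion and $\cdot$ is the loop multiplication, is then a homotopy equivalence by the standard splitting of a looped fibration with a section. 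This reduces the theorem to showing $F\simeq\bigvee_{i=1}^{n-1}\Sigma F_{A_2}$.

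Write $F_k$ for the homotopy fibre of $g_k\colon P_k\to X_1$, with the convention $P_1=X_1$, $g_1=\mathrm{id}$, so $F_1\simeq\ast$; I prove $F_k\simeq\bigvee_{i=1}^{k-1}\Sigma F_{A_2}$ by induction on $k$. The pushouts defining the $P_k$ are along cofibrations, hence homotopy pushouts, and by construction $g_k$ restricts to $g_{k-1}$ on $P_{k-1}\subseteq P_k$ and to the homeomorphism $\phi_{1,k}^{-1}$ on $X_k\subseteq P_k$. Passing to homotopy fibres over the basepoint of $X_1$ turns the homotopy pushout square defining $P_n$, equipped with $g_n\colon P_n\to X_1$, into a cube whose bottom face is that pushout, whose top face is the square of homotopy fibres, and whose four side faces are homotopy pullbacks (in each, the two vertical homotopy fibres are $\Omega X_1$ and the comparison map between them is the identity); by Mather's cube theorem the top face is a homotopy pushout. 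Its three corners other than $F_n$ are the homotopy fibres over $X_1$ of $\phi_{1,n-1}(A_2)\hookrightarrow P_{n-1}$, of $X_n\xrightarrow{\phi_{1,n}^{-1}}X_1$, and of $P_{n-1}\xrightarrow{g_{n-1}}X_1$, which are respectively $\simeq F_{A_2}$ (the map being a homeomorphism onto $A_2$ followed by the inclusion $A_2\hookrightarrow X_1$), $\simeq\ast$, and $F_{n-1}$. Hence $F_n$ is the homotopy cofibre of a map $j\colon F_{A_2}\to F_{n-1}$.

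The crux is that $j$ is null-homotopic. It is induced on homotopy fibres over $X_1$ by the inclusion $\phi_{1,n-1}(A_2)\hookrightarrow P_{n-1}$, which factors as $\phi_{1,n-1}(A_2)\hookrightarrow X_{n-1}\hookrightarrow P_{n-1}$; these inclusions are compatible with the maps to $X_1$ obtained by restricting $g_{n-1}$, so by naturality of the homotopy fibre $j$ factors through the homotopy fibre over $X_1$ of $X_{n-1}\xrightarrow{\phi_{1,n-1}^{-1}}X_1$, which is contractible. The homotopy cofibre of a null-homotopic map $F_{A_2}\to F_{n-1}$ is $F_{n-1}\vee\Sigma F_{A_2}$, so by the inductive hypothesis $F_n\simeq\bigl(\bigvee_{i=1}^{n-2}\Sigma F_{A_2}\bigr)\vee\Sigma F_{A_2}=\bigvee_{i=1}^{n-1}\Sigma F_{A_2}$, completing the induction and the proof. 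The delicate points are this null-homotopy---which is precisely what makes a wedge summand split off at each stage, and which uses that $X_n$ is attached along a subspace of a copy $X_{n-1}$ that $g_{n-1}$ sends homeomorphically onto $X_1$---together with checking that the fold maps $g_k$ are compatible under the inclusions $P_{k-1}\hookrightarrow P_k$.
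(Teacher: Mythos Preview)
Your proof is correct and follows essentially the same strategy as the paper: exhibit a section of $g_n$ to split $\Omega P_n$, apply Mather's cube lemma to the defining pushout to obtain a homotopy pushout of fibres, show the map $F_{A_2}\to F_{n-1}$ is null-homotopic by factoring through a contractible fibre, and conclude by induction. Your null-homotopy argument (factoring directly through $X_{n-1}\subseteq P_{n-1}$) is in fact cleaner than the paper's, which routes through $g_{n-1}$ and back and must treat the parity of $n$ separately; the only imprecision is that the restriction of $g_k$ to $X_k$ is $\psi_k\circ\phi_{1,k}^{-1}$ rather than $\phi_{1,k}^{-1}$, and $g_{n-1}$ may send $\phi_{1,n-1}(A_2)$ to $A_1$ rather than $A_2$ depending on parity, but since $\psi$ is an automorphism of $X_1$ interchanging $A_1$ and $A_2$ this does not affect any of your conclusions.
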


As examples, in Section ~\ref{sec:symsimpdecomp} we apply Theorem ~\ref{symDecomp} to the wedge sum $\bigvee_{i=1}^n X$ to obtain a version of the Hilton-Milnor theorem, and to the connected sum $X \conn X$, when $X$ is a closed, $n$-dimensional manifold, to obtain a homotopy decomposition for $\Omega(X \conn X)$. The decomposition of $\Omega(X \conn X)$ recovers a special case of \cite{BT} using a different approach, and a special case of \cite[Lemma 2.1]{HT2}.

The main motivation behind Theorem ~\ref{symDecomp} in this paper is to study polyhedral products associated to graphs. Polyhedral products have attracted considerable attention due to their diverse range of applications across mathematics. They originated in the study of spaces called moment-angle manifolds, but have since been applied to number theory, combinatorics, free groups and robotics to name just a few areas \cite{BBC}.
  
A polyhedral product is a natural subspace of $\prod_{i=1}^m X_i$ defined as follows. Let $K$ be a simplicial complex on the vertex set $[m] = \{1,2,\cdots,m\}$. For $1 \leq i \leq m$, let $(X_i,A_i)$ be a pair of pointed $CW$-complexes, where $A_i$ is a pointed $CW$-subcomplex of $X_i$. Let $\uxa = \{(X_i,A_i)\}_{i=1}^m$ be the sequence of pairs. For each simplex $\sigma \in K$, let $\uxa^\sigma$ be defined by \[ \uxa^\sigma = \prod\limits_{i=1}^m Y_i \text{ where }  Y_i = \begin{cases} X_i & i \in \sigma \\ A_i & i \notin \sigma. \end{cases}\] The \textit{polyhedral product} determined by $\uxa$ and $K$ is \[\uxa^K = \bigcup\limits_{\sigma \in K} \uxa^{\sigma} \subseteq \prod\limits_{i=1}^m X_i.\]

For example, if $A_i$ is a point for all $i$ and $K$ is $m$ disjoint points, then $(\underline{X},\underline{*})^K = X_1 \vee \cdots \vee X_m$. However, if $K$ is the full $(m-1)$-simplex $\Delta^{m-1}$, then $(\underline{X},\underline{*})^K = \prod_{i=1}^m X_i$. In this paper, we specialise to the case when $X_i=X$ and $A_i=A$ for all $i$, and in this case, the polyhedral product is denoted by $(X,A)^K$. 

For a simplicial complex with a high degree of symmetry, we define a fold map which is similar to the fold map constructed for Theorem ~\ref{symDecomp}. It is shown that for polyhedral products of the form $(X,*)^K$, this induces a fold map on the level of polyhedral products. This gives us a special case of Theorem ~\ref{symDecomp}, presented in the paper as Theorem ~\ref{polysymDecomp}, in the case of polyhedral products. 

    Polyhedral products of the form $(\mathbb{C}P^{\infty},*)^K$ are known as \textit{Davis-Januszkiewicz} spaces which are denoted by $\djk$. These are closely related to polyhedral products of the form $(D^2,S^1)^K$ which are known as \textit{moment-angle complexes}, and these are denoted by $\zk$. There are various families of simplicial complexes for which it is known that the corresponding moment-angle complexes are homotopy equivalent to a wedge of simply connected spheres. These include shifted complexes \cite{GT2}, flag complexes with chordal 1-skeleton \cite{PT} and totally fillable complexes \cite{IK}. However, there are many examples of simplicial complexes for which the loops on the corresponding moment-angle complex are homotopy equivalent to a product of spheres and loops on simply connected spheres, but is not homotopy equivalent to a wedge of spheres before looping. In \cite{PT}, it was shown that the loops on a moment-angle complex associated to flag complexes (without the assumption on the 1-skeleton) are homotopy equivalent to a product of spheres and loops on simply connected spheres. The decomposition obtained here was a coarse description with no way to enumerate the spheres given in the product. In Section ~\ref{sec:bookgraphex}, we apply Theorem ~\ref{symDecomp} to give an explicit decomposition of the loops on a family of graphs known as book graphs.
    
The author would like to thank Stephen Theriault for his guidance and diligent proof reading during the preparation of this work.

\section{Preliminary homotopy theory}
\label{sec:homotopytheory}

In this section, we prove the results in homotopy theory that will be required later on. All the spaces in this section will be assumed to be path-connected $CW$-complexes. First, we have a result known as Mather's Cube Lemma \cite{M}.

\begin{theorem}
\label{cubelemma}
Suppose there is a homotopy commutative diagram of spaces and maps \[\begin{tikzcd}[row sep=scriptsize, column sep=scriptsize]
 E \arrow[dr]{} \arrow[rr] \arrow[dd] & & F \arrow[dr] \arrow[dd] &  \\
& G \arrow[rr, crossing over] \arrow[dd, crossing over] & & H \\
 A \arrow[dr] \arrow[rr] & & B \arrow[dr] &  \\
& C \arrow[rr] & & D \arrow[from=uu, crossing over]\\
\end{tikzcd}\] where the bottom face is a homotopy pushout and the four sides are homotopy pullbacks, then the top face is also a homotopy pushout.
\qedno
\end{theorem}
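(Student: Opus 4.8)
The content of the statement is that the pullback of the bottom square along the map $H\to D$ is again a pushout square; the plan is to establish this by strictifying the cube and then reading off the conclusion at the point-set level, where it becomes almost tautological. The first step is a reduction using the invariance of homotopy pushouts and homotopy pullbacks under objectwise homotopy equivalence. I would rectify the homotopy commutative cube to a strictly commutative one (which can be arranged for a diagram of this shape by absorbing the connecting homotopies into the spaces), then factor $A\to B$ and $A\to C$ as closed cofibrations followed by homotopy equivalences, factor $H\to D$ as a homotopy equivalence followed by a Hurewicz fibration $p\colon H\to D$, and redefine $F$, $G$, $E$ as the strict pullbacks $p^{-1}(B)$, $p^{-1}(C)$, $p^{-1}(A)$. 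After these replacements one may assume that $A\hookrightarrow B$ and $A\hookrightarrow C$ are closed cofibrations and that $D=B\cup_{A}C$ is the genuine pushout, so that $D$ is also the homotopy pushout and $B$, $C$ are closed subspaces of $D$ with $B\cap C=A$; that $p\colon H\to D$ is a fibration; and that $F=p^{-1}(B)$, $G=p^{-1}(C)$, $E=p^{-1}(A)$ are models for the homotopy pullbacks demanded by the four side faces, precisely because $p$ is a fibration.

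With the cube in this rigid form the top face becomes essentially combinatorial. As subspaces of $H$ one has $F\cap G=p^{-1}(B)\cap p^{-1}(C)=p^{-1}(B\cap C)=p^{-1}(A)=E$ and $F\cup G=p^{-1}(B\cup C)=p^{-1}(D)=H$, and since $B$ and $C$ are closed in $D$, the subspaces $F$ and $G$ are closed in $H$. A space which is the union of two closed subspaces is the pushout in $\mathrm{Top}$ of those subspaces along their intersection, so $H$ is the strict pushout of $F\leftarrow E\to G$. To upgrade this to a homotopy pushout it suffices to know that $E\hookrightarrow F$ is a cofibration, which follows from the standard point-set fact that the preimage of a closed cofibration under a Hurewicz fibration is again a closed cofibration, applied here to the fibration $p|_{F}\colon F\to B$ and the closed cofibration $A\hookrightarrow B$. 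Transporting this conclusion back along the objectwise homotopy equivalences introduced in the reduction step shows that the original top face is a homotopy pushout.

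The main obstacle is entirely point-set: one must carry out the rectification and the cofibration/fibration replacements so that all four side squares genuinely remain homotopy pullbacks while the bottom square becomes an honest pushout along closed cofibrations, and one must supply (or cite) the lemma that cofibrations pull back to cofibrations along fibrations. An alternative to the strictification route is Mather's original argument in \cite{M}, which instead builds the comparison homotopy equivalence between $H$ and the double mapping cylinder of $F\leftarrow E\to G$ directly, using an explicit lifting function for the fibration $H\to D$; conceptually, the whole statement is the shadow in $\mathrm{Top}$ of the fact that colimits of spaces are stable under base change.
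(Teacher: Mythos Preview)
The paper does not prove this statement at all: the theorem is stated with a terminal \qedno\ and attributed to Mather \cite{M}, so there is no in-paper argument to compare against. Your proposal is therefore not competing with anything the author wrote.

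That said, your sketch is a reasonable and essentially correct outline of one standard route to the cube lemma. The rectification step is the place where real work hides: turning a homotopy-commutative cube into a strict one while preserving the homotopy-pullback/pushout properties of the faces requires some care, and you have only asserted it can be done. The remaining point-set ingredients you invoke (pushouts along closed cofibrations model homotopy pushouts; the pullback of a closed cofibration along a Hurewicz fibration is again a closed cofibration, due to Str{\o}m) are standard. You yourself note that Mather's original proof proceeds differently, constructing the comparison map to the double mapping cylinder directly via a lifting function; either route is acceptable, but since the paper simply cites \cite{M}, the intended ``proof'' here is just the reference.
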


Let $\Omega B \xrightarrow{t} F \rightarrow E \xrightarrow{f} B$ be a homotopy fibration sequence. Let $b_0$ be the basepoint of $B$ and $PB = \{\omega:[0,1] \rightarrow B\: | \: \omega(1) = b_0\}$ be the path space of $B$. The homotopy fibre $F$ can be regarded as the space $P_f$ (c.f \cite{S}[p.59]) defined by \[P_f = \{(e,\omega) \in E \times PB | f(e) = \omega(0)\}.\] There is an action $\theta:\Omega B \times F \rightarrow F$ given by $\theta(\lambda,(e,\omega)) = (e,\lambda \cdot \omega)$ where $\lambda \cdot \omega$ is the path defined by \[\lambda \cdot \omega(t) = \begin{cases} \lambda(2t) & \text{if }0 \leq t \leq \frac{1}{2} \\ \omega(2t-1) & \text{if } \frac{1}{2} \leq t \leq 1.\end{cases}\] It follows from the definition that $\theta$ restricted to $F$ is homotopic to the identity and $\theta$ restricted to $\Omega B$ is homotopic to $t$. One property of this action is that the following diagram homotopy commutes \[\begin{tikzcd}
	{\Omega B \times F} & F \\
	F & E
	\arrow["{\pi_2}", from=1-1, to=2-1]
	\arrow["\theta", from=1-1, to=1-2]
	\arrow[from=1-2, to=2-2]
	\arrow[from=2-1, to=2-2]
\end{tikzcd}\] where $\pi_2$ is the projection into the second factor. With this action, the following result will be used to determine the homotopy type of various fibres.

\begin{lemma}
\label{Fibrethomotopyequiv}
Let $F \rightarrow E \rightarrow B$ and $F'\rightarrow E' \xrightarrow{s} B'$ be homotopy fibrations and suppose there exists a homotopy fibration diagram \[\begin{tikzcd}
	{\Omega F'} & G & F & {F'} \\
	{\Omega E'} & H & E & {E'} \\
	{\Omega B'} & J & B & {B'}
	\arrow[from=1-3, to=1-4]
	\arrow[from=1-1, to=1-2]
	\arrow["v", from=1-2, to=2-2]
	\arrow[from=1-3, to=2-3]
	\arrow[from=1-4, to=2-4]
	\arrow["s", from=2-4, to=3-4]
	\arrow[from=2-3, to=3-3]
	\arrow[from=2-3, to=2-4]
	\arrow[from=3-2, to=3-3]
	\arrow[from=2-2, to=2-3]
	\arrow[from=1-2, to=1-3]
	\arrow[from=1-1, to=2-1]
	\arrow[from=2-1, to=2-2]
	\arrow["t", from=3-1, to=3-2]
	\arrow["\Omega s", from=2-1, to=3-1]
	\arrow["u", from=2-2, to=3-2]
	\arrow[from=3-3, to=3-4]
\end{tikzcd}\] that defines the spaces $G$, $H$, $J$ and the maps $v:G \rightarrow H$ and $u: H \rightarrow J$. If $t \circ \Omega s$ has a right homotopy inverse $w:J \rightarrow \Omega E'$, then the composite \[\psi:J \times G \xrightarrow{w \times v} \Omega E' \times H \xrightarrow{\theta} H\] is a homotopy equivalence. 
\end{lemma}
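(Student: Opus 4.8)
The plan is to compare $\psi$ with the middle column of the diagram, $G \xrightarrow{v} H \xrightarrow{u} J$, which is itself a homotopy fibration. More precisely, I will show that $u \circ \psi \simeq \pi_1 \colon J \times G \to J$, so that $\psi$ is a map from the (trivial) homotopy fibration $G \to J \times G \xrightarrow{\pi_1} J$ to the homotopy fibration $G \xrightarrow{v} H \xrightarrow{u} J$ covering the identity on $J$; then I will check that the induced map on fibres is homotopic to the identity on $G$; and finally I will deduce that $\psi$ is a homotopy equivalence from the five lemma applied to the long exact homotopy sequences of these two fibrations, together with Whitehead's theorem, since all spaces here are path-connected $CW$-complexes.

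To see that $u \circ \psi \simeq \pi_1$, let $\theta' \colon \Omega B' \times J \to J$ denote the fibre action of the homotopy fibration $J \to B \to B'$ coming from the bottom two rows, whose connecting map is $t$. The map $u \colon H \to J$, the map $E \to B$ from the third column, and $s \colon E' \to B'$ assemble into a map of homotopy fibrations from $H \to E \to E'$ to $J \to B \to B'$, and the fibre action is natural under such maps, so $u \circ \theta \simeq \theta' \circ (\Omega s \times u)$. Therefore
\[ u \circ \psi \;=\; u \circ \theta \circ (w \times v) \;\simeq\; \theta' \circ \bigl( (\Omega s \circ w) \times (u \circ v) \bigr). \]
Since $G \xrightarrow{v} H \xrightarrow{u} J$ is a homotopy fibration, $u \circ v$ is null-homotopic; since $\theta'$ restricted to $J$ is homotopic to the identity and $\theta'$ restricted to $\Omega B'$ is homotopic to $t$, the right-hand side is homotopic to $t \circ \Omega s \circ w \circ \pi_1$. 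By hypothesis $w$ is a right homotopy inverse of $t \circ \Omega s$, so $t \circ \Omega s \circ w \simeq \mathrm{id}_J$, whence $u \circ \psi \simeq \pi_1$.

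For the induced map on fibres, we may take $w$ to be pointed. The fibre of $\pi_1$ is $\{\ast\} \times G$, and the middle column exhibits $G$ as the homotopy fibre of $u$ via $v$; since $u \circ \psi \simeq \pi_1$, the map $\psi$ induces a map $\overline{\psi} \colon G \to G$ on homotopy fibres. Restricting $\psi$ to $\{\ast\} \times G$ gives $y \mapsto \theta(w(\ast), v(y)) = \theta(\ast, v(y)) \simeq v(y)$, using that $\theta$ restricted to $H$ is homotopic to the identity; under the identification of the homotopy fibre of $u$ with $G$ via $v$, this reads $\overline{\psi} \simeq \mathrm{id}_G$. Thus $\psi$ fits into a map of homotopy fibrations inducing homotopy equivalences on both base and fibre, so $\psi_{\ast}$ is an isomorphism on all homotopy groups by the five lemma, and hence $\psi$ is a homotopy equivalence by Whitehead's theorem.

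The main obstacle is the first step: pinning down that the prescribed map $u$ is compatible with the fibre actions, i.e.\ the naturality relation $u \circ \theta \simeq \theta' \circ (\Omega s \times u)$, and then carefully composing the resulting chain of homotopies to reduce $u\circ\psi$ to $\pi_1$. Once $u \circ \psi \simeq \pi_1$ is established, the identification of the fibre map and the final Whitehead argument are formal consequences of the properties of the fibre action recorded before the lemma.
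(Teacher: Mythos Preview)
Your approach is essentially the same as the paper's: establish $u\circ\psi\simeq\pi_1$ via naturality of the holonomy action, regard $\psi$ as a map of fibrations over $J$ covering the identity, identify the induced map on fibres, and finish with the five lemma and Whitehead's theorem. The paper carries out exactly these steps in the same order.

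The one place where you are looser than the paper is the sentence ``under the identification of the homotopy fibre of $u$ with $G$ via $v$, this reads $\overline{\psi}\simeq\mathrm{id}_G$.'' What your computation actually shows is $\psi\circ i_2\simeq v$, hence $v\circ\overline{\psi}\simeq v$; but $v$ need not be a homotopy monomorphism, so this does not by itself give $\overline{\psi}\simeq\mathrm{id}_G$. The paper closes this gap by noting that the square
\[
\begin{tikzcd}
G \arrow[r,"\overline{\psi}"] \arrow[d,"i_2"'] & G \arrow[d,"v"] \\
J\times G \arrow[r,"\psi"] & H
\end{tikzcd}
\]
is a homotopy pullback (it is the fibre square of a map of fibrations over the identity on $J$), and then uses its universal property: the maps $i_2\colon G\to J\times G$ and $\mathrm{id}_G\colon G\to G$ together with the homotopy $\psi\circ i_2\simeq v=v\circ\mathrm{id}_G$ produce a lift $\beta\colon G\to G$ with $\overline{\psi}\circ\beta\simeq\mathrm{id}_G$ and $i_2\circ\beta\simeq i_2$; composing the latter with $\pi_2$ gives $\beta\simeq\mathrm{id}_G$, whence $\overline{\psi}\simeq\mathrm{id}_G$. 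Once this small extra step is inserted, your argument and the paper's coincide.
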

\begin{proof}

First, consider the diagram \begin{equation}\label{splitting1stdiag}\begin{tikzcd}
	{J \times G} & {\Omega E' \times H} & H \\
	& {\Omega B' \times J} & J
	\arrow["\theta", from=1-2, to=1-3]
	\arrow["{w \times v}", from=1-1, to=1-2]
	\arrow["{r \times *}"', from=1-1, to=2-2]
	\arrow["{\Omega s \times u}", from=1-2, to=2-2]
	\arrow["{u}", from=1-3, to=2-3]
	\arrow["{\theta'}"', from=2-2, to=2-3]
\end{tikzcd}\end{equation} where $r = \Omega s \circ w$. The left triangle homotopy commutes since the composite $G \rightarrow H \rightarrow J$ is null homotopic and by definition of $r$. The right square homotopy commutes by the naturality of the homotopy action. The first row of (\ref{splitting1stdiag}) is the definition of $\psi$. The bottom direction of (\ref{splitting1stdiag}) is homotopic to the projection onto the first factor $\pi_1: J \times G \rightarrow J$ by definition of $r$ and since the restriction of $\theta'$ to $\Omega B'$ is homotopic to $t$. This gives us that $u \circ \psi \simeq \pi_1$. This homotopy results in the following homotopy fibration diagram that defines a map $\alpha: G \rightarrow G$
\begin{equation}\label{eq:5lemma1}\begin{tikzcd}
	G & G \\
	{J \times G} & H \\
	J & J
	\arrow["\alpha", from=1-1, to=1-2]
	\arrow["v", from=1-2, to=2-2]
	\arrow["u", from=2-2, to=3-2]
	\arrow["\psi", from=2-1, to=2-2]
	\arrow["i_2", from=1-1, to=2-1]
	\arrow["{\pi_1}", from=2-1, to=3-1]
	\arrow[Rightarrow, no head, from=3-1, to=3-2]
\end{tikzcd}\end{equation} where $i_2$ is the inclusion into the second factor. We want to show that $\alpha$ can be chosen to be the identity map.

The top square of (\ref{eq:5lemma1}) is a homotopy pullback. Since the restriction of $\theta$ to $H$ is homotopic to the identity, the composite $\psi \circ i_2$ is homotopic to $v$. Therefore, we obtain a pullback map $\beta: G \rightarrow G$ \[\begin{tikzcd}
	G \\
	& G & G \\
	& {J \times G} & H.
	\arrow["{i_2}", curve={height=12pt}, from=1-1, to=3-2]
	\arrow["\psi", from=3-2, to=3-3]
	\arrow["v", from=2-3, to=3-3]
	\arrow["\alpha", from=2-2, to=2-3]
	\arrow["{id_G}", curve={height=-12pt}, from=1-1, to=2-3]
	\arrow["i_2", from=2-2, to=3-2]
	\arrow["\beta", dashed, from=1-1, to=2-2]
\end{tikzcd}\] Now we wish to show that $\beta \simeq id_G$ which will imply that $\alpha \simeq id_G$. Consider the homotopy commutative diagram \[\begin{tikzcd}
	G \\
	& G \\
	& {J \times G} & G.
	\arrow["{id_G}", from=2-2, to=3-3]
	\arrow["\beta", from=1-1, to=2-2]
	\arrow["{i_2}", from=2-2, to=3-2]
	\arrow["{i_2}", curve={height=12pt}, from=1-1, to=3-2]
	\arrow["{\pi_2}", from=3-2, to=3-3]
\end{tikzcd}\] We can see from this that $id_G \circ \beta \simeq \pi_2 \circ i_2 = id_G$. Thus $\beta \simeq id_G$, implying that $\alpha \simeq id_G$. By the 5-lemma applied to (\ref{eq:5lemma1}), $\psi$ induces an isomorphism on homotopy groups. Therefore by Whitehead's theorem, $\psi$ is a homotopy equivalence.
\end{proof}
Let $i:A \hookrightarrow A \vee B$ be the inclusion into the first wedge summand and let $F$ be the homotopy fibre of $i$. In this case, the homotopy type of $F$ can be determined in terms of the homotopy fibre $F'$ of the pinch map $p:A \vee B \rightarrow A$. If $X$ and $Y$ are pointed spaces, the \textit{right half-smash} is defined by $X \rtimes Y = X \times Y /(*\times Y)$. It was shown in \cite{G} that there exists a homotopy equivalence $F' \simeq B \rtimes \Omega A$. The following proposition is a slight generalisation of \cite[Lemma 5.2]{HT1}.

\begin{proposition}
\label{inclusionfibre}
Let $A$ and $B$ be pointed spaces and let $i:A \hookrightarrow A \vee B$ be the inclusion. Denote by $F$ the homotopy fibre of $i$. Then there exists a homotopy equivalence $F \simeq \Omega(B \rtimes \Omega A)$.
\end{proposition}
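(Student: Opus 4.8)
The plan is to identify $F$ as the loop space on the homotopy fibre of the pinch map. Let $p:A\vee B\to A$ be the pinch map, which is the identity on $A$ and the constant map on $B$; then $p\circ i=\mathrm{id}_A$, so $i$ is a section of $p$. Write $F'$ for the homotopy fibre of $p$, so that by \cite{G} there is a homotopy equivalence $F'\simeq B\rtimes\Omega A$. I claim that $F\simeq\Omega F'$, which gives the result.

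To prove the claim I would first use the standard path-space model $F'=(A\vee B)\times_A PA$, where $PA\to A$ is the based path fibration $\omega\mapsto\omega(0)$; then the projection $\iota:F'\to A\vee B$ is literally the pullback of the fibration $PA\to A$ along $p$. Next I would form the homotopy pullback $Q$ of $i:A\to A\vee B$ along $\iota$. Since $\iota$ is a fibration this is a strict pullback, and stacking it with the previous square gives a rectangle
\[\begin{tikzcd}
	Q & {F'} & PA \\
	A & {A \vee B} & A
	\arrow[from=1-1, to=1-2]
	\arrow[from=1-1, to=2-1]
	\arrow["\iota", from=1-2, to=2-2]
	\arrow[from=1-2, to=1-3]
	\arrow[from=1-3, to=2-3]
	\arrow["i"', from=2-1, to=2-2]
	\arrow["p"', from=2-2, to=2-3]
\end{tikzcd}\]
both of whose squares are homotopy pullbacks. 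By the pasting law the outer square is a homotopy pullback, exhibiting $Q$ as the pullback of $PA\to A$ along $p\circ i=\mathrm{id}_A$; pulling a fibration back along the identity changes nothing, so $Q\simeq PA$ is contractible.

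Finally, in the left-hand square the maps $Q\to F'$ and $i:A\to A\vee B$ are parallel, so their homotopy fibres agree: the homotopy fibre of $Q\to F'$ is $F$. (Here $F'$ is path-connected since $p$ is surjective on $\pi_1$, so $\Omega F'$ is unambiguous.) Since $Q$ is contractible, the homotopy fibre of $Q\to F'$ is $\Omega F'$, whence $F\simeq\Omega F'\simeq\Omega(B\rtimes\Omega A)$. I do not expect a serious obstacle here; the only points to get right are the identification of $\iota$ with a pullback of the path fibration and the indexing in the pasting law. It is worth noting that the naive alternative — splitting $\Omega(A\vee B)\simeq\Omega A\times\Omega F'$ and trying to read off $F$ — only yields $\Omega F\simeq\Omega\Omega F'$ and must then contend with the fact that $F$ is typically disconnected, a difficulty the present argument avoids.
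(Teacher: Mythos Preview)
Your proof is correct and takes essentially the same approach as the paper: both exploit the retraction $p\circ i=\mathrm{id}_A$ to build a homotopy pullback square with $F'$ in one corner and a contractible space in the other, then read off $F\simeq\Omega F'$. The paper organises this as a vertical homotopy fibration diagram over the square $\begin{smallmatrix}A&\to&A\vee B\\ \| & & \downarrow p\\ A&=&A\end{smallmatrix}$ (so the contractible corner is literally the fibre of $\mathrm{id}_A$), whereas you obtain the same square by pulling back $\iota$ along $i$ and invoking the pasting law; these are two presentations of the same argument.
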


\begin{proof}
Consider the following diagram \begin{equation}\label{incpinch}\begin{tikzcd}
	{*} & {B \rtimes \Omega A} \\
	A & {A \vee B} \\
	A & A.
	\arrow["p", from=2-2, to=3-2]
	\arrow["i",hook, from=2-1, to=2-2]
	\arrow[Rightarrow, no head, from=2-1, to=3-1]
	\arrow[Rightarrow, no head, from=3-1, to=3-2]
	\arrow[from=1-2, to=2-2]
	\arrow[from=1-1, to=1-2]
	\arrow[from=1-1, to=2-1]
\end{tikzcd}\end{equation} The bottom square commutes since the pinch map is a left inverse of $i$. Therefore, we can take homotopy fibres to obtain (\ref{incpinch}). In particular, the top square is a homotopy pullback. Therefore, $F$ is homotopy equivalent to the homotopy fibre of $* \rightarrow B \rtimes \Omega A$. Hence, there is a homotopy equivalence $F \simeq \Omega(B \rtimes \Omega A)$.
\end{proof}

Next, we will require the homotopy type of a specific homotopy pushout to decompose the homotopy fibre of a map in Section ~\ref{sec:application}. If $X$ and $Y$ are pointed spaces with basepoints $x_0$ and $y_0$ respectively, then the \textit{reduced join} is defined by $X * Y = (X \times I \times Y)/\sim$, where $(x,0,y) \sim (x,0,y')$, $(x,1,y) \sim (x',1,y)$ and $(x_0,t,y_0) \sim (x_0,0,y_0)$ for all $x,x' \in X$, $y,y' \in Y$ and $t \in I$. By definition, $A*B = A \times CB \cup_{A \times B} CA \times B$. The following result is from \cite{GT1}.

\begin{lemma}
\label{coordinatehtpyequiv}
Let $A$, $B$ and $C$ be spaces. Define $Q$ as the homotopy pushout \[\begin{tikzcd}
	{A \times B} & {C \times B} \\
	A & Q.
	\arrow["{\pi_1}", from=1-1, to=2-1]
	\arrow["{* \times id_B}", from=1-1, to=1-2]
	\arrow[from=1-2, to=2-2]
	\arrow[from=2-1, to=2-2]
\end{tikzcd}\] Then $Q \simeq (A*B) \vee (C \rtimes  B)$. 
\qedno
\end{lemma}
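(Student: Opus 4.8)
The plan is to break the defining pushout into two stacked homotopy pushouts, the first of which is a join and the second of which has a null homotopic leg. The key observation is that the map $*\times id_B\colon A\times B\to C\times B$ factors as $A\times B\xrightarrow{\pi_2}B\xrightarrow{\iota}C\times B$, where $\iota$ is the inclusion $b\mapsto(*,b)$ of $B$ as the subspace $\{*\}\times B$. By the composition law for homotopy pushouts, $Q$ is therefore the outer rectangle of
\[\begin{tikzcd}
	{A\times B} & B & {C\times B}\\
	A & P & Q
	\arrow["{\pi_2}",from=1-1,to=1-2]
	\arrow["\iota",from=1-2,to=1-3]
	\arrow["{\pi_1}"',from=1-1,to=2-1]
	\arrow[from=1-2,to=2-2]
	\arrow[from=1-3,to=2-3]
	\arrow[from=2-1,to=2-2]
	\arrow[from=2-2,to=2-3]
\end{tikzcd}\]
in which the left square defines $P$ as the homotopy pushout of the two projections out of $A\times B$ and the right square is a homotopy pushout by construction.

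The first step is to identify $P$. It is standard that the homotopy pushout of $A\xleftarrow{\pi_1}A\times B\xrightarrow{\pi_2}B$ is the join $A*B$; this also follows from the description $A*B=A\times CB\cup_{A\times B}CA\times B$ recalled above, since the projections $A\times CB\to A$ and $CA\times B\to B$ are homotopy equivalences compatible with $\pi_1$ and $\pi_2$. Moreover the structure map $k\colon B\to A*B$ is null homotopic: restricting the two homotopic composites $A\times B\to P$ given by the commuting left square to $\{*\}\times B\cong B$, the composite through $\pi_2$ becomes $k$ while the composite through $\pi_1$ becomes a constant map, so $k$ is homotopic to a constant. Hence $Q$ is the homotopy pushout of $A*B\xleftarrow{k}B\xrightarrow{\iota}C\times B$ with $k$ null homotopic.

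Since a homotopy pushout is unchanged up to homotopy equivalence when a leg is replaced by a homotopic map, we may replace $k$ by the constant map; the resulting homotopy pushout is $(A*B)\vee C_\iota$, where $C_\iota$ is the homotopy cofibre of $\iota$ and the wedge point is the cone point of the mapping cone. Finally, $\iota$ is, up to the evident homeomorphism, the inclusion of the subcomplex $\{*\}\times B\hookrightarrow C\times B$, which is a cofibration, so $C_\iota\simeq(C\times B)/(\{*\}\times B)=C\rtimes B$. Combining the last two equivalences gives $Q\simeq(A*B)\vee(C\rtimes B)$.

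I do not expect a serious obstacle once the factorisation of $*\times id_B$ through $B$ is in place; the points needing care, rather than being genuinely difficult, are the bookkeeping for the composition law for homotopy pushouts, the verification that the join structure map $B\to A*B$ is null homotopic, and the identification of the homotopy cofibre of $\iota$ with the strict right half-smash $C\rtimes B$, the last of these using well-pointedness, which is automatic in the $CW$ setting.
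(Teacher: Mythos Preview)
Your argument is correct. You factor the horizontal map $*\times id_B$ through the projection $\pi_2$ to $B$, recognise the left square as the join, observe that the structure map $B\to A*B$ is null homotopic, and finish by identifying the cofibre of $\iota$ with $C\rtimes B$. Each step is standard and your justifications are adequate.

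The paper does not prove this lemma directly (it is cited from \cite{GT1}), but it effectively reproves it inside the proof of Lemma~\ref{naturalhtpyequiv}, and that argument is organised differently. There the projection $\pi_1$ is replaced by the cofibration $A\times B\hookrightarrow A\times CB$, and the null homotopic map $A\to C$ is factored through the cone $CA$. This produces a $3\times 3$ grid of strict pushouts with intermediate spaces $M=CA\times B\cup_B CB$ and $N=(C\times B)\cup_B CB$; since $M$ is contractible and $N\simeq C\rtimes B$, the bottom-right square yields $(A*B)\vee(C\rtimes B)$. In short, the paper works with explicit cofibrant replacements and strict pushouts, whereas you stay at the level of homotopy pushouts and invoke homotopy invariance to replace $k$ by a constant.

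The trade-off is this: your route is shorter and cleaner for the bare statement, but the paper's more explicit construction is set up precisely so that naturality can be read off from the diagram, which is the point of Lemma~\ref{naturalhtpyequiv}. Your step ``replace $k$ by the constant map'' is exactly where naturality would need extra care, since the choice of null homotopy enters. For the lemma as stated, however, your proof is perfectly fine.
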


In Section ~\ref{sec:application} we will require a naturality property of the homotopy equivalence in Lemma ~\ref{coordinatehtpyequiv}. Suppose there are two homotopy pushouts $Q$ and $Q'$ of the form \begin{equation}\label{eq:pushoutsfornatural}\begin{tikzcd}
	{A \times B} & {C \times B} & {A' \times B'} & {C' \times B'} \\
	A & Q & {A'} & {Q'.}
	\arrow["{f \times id_B}", from=1-1, to=1-2]
	\arrow["{\pi_1}", from=1-1, to=2-1]
	\arrow[from=2-1, to=2-2]
	\arrow[from=1-2, to=2-2]
	\arrow["{\pi_1}", from=1-3, to=2-3]
	\arrow["{f' \times id_{B'}}", from=1-3, to=1-4]
	\arrow[from=2-3, to=2-4]
	\arrow[from=1-4, to=2-4]
\end{tikzcd}\end{equation} In addition, suppose $f$ and $f'$ are null homotopic, realised by homotopies $H:A \times I \rightarrow C$ and $H':A' \times I \rightarrow C'$. Suppose $H$ restricted to $A \times \{1\}$ and $H'$ restricted to $A' \times \{1\}$ are the respective constant maps. Let $a:A \xrightarrow{a} A'$, $b:B \xrightarrow{b} B'$ and $c: C \xrightarrow{c} C'$ be continuous maps, such that the following diagram strictly commutes \begin{equation}\label{eq:diagramsfornatural}\begin{tikzcd}
	{A \times I} & C \\
	{A' \times I} & {C'.}
	\arrow["H", from=1-1, to=1-2]
	\arrow["{H'}", from=2-1, to=2-2]
	\arrow["{a \times id_I}", from=1-1, to=2-1]
	\arrow["c", from=1-2, to=2-2]
\end{tikzcd}\end{equation} Since $H$ and $H'$ restricted to $A \times \{1\}$ and $A' \times \{1\}$ respectively are constant maps, there are quotient maps $\overline{f}$, $\overline{f}'$ such that (\ref{eq:diagramsfornatural}) can be written as a commutative diagram \begin{equation}\label{quotientdiagfornatural}\begin{tikzcd}
	CA & C \\
	{CA'} & {C'.}
	\arrow["{\overline{f}}", from=1-1, to=1-2]
	\arrow["{\overline{f}'}", from=2-1, to=2-2]
	\arrow["Ca", from=1-1, to=2-1]
	\arrow["c", from=1-2, to=2-2]
\end{tikzcd}\end{equation} The next lemma will show that the homotopy equivalence in Lemma ~\ref{coordinatehtpyequiv} is natural in this case. Let $f: A \rightarrow Y$ and $g: B \rightarrow Y$ be maps. The map $A \vee B \rightarrow Y$ determined by $f$ and $g$ is denoted by \[f \perp g: A \vee B \rightarrow Y.\]

\begin{lemma}
\label{naturalhtpyequiv}
Suppose we have data as in (\ref{eq:pushoutsfornatural}) and (\ref{eq:diagramsfornatural}). Then the homotopy equivalence in Lemma ~\ref{coordinatehtpyequiv} is natural with respect to $a$, $b$ and $c$.
\end{lemma}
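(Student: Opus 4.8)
The plan is to revisit the construction of the homotopy equivalence $Q \simeq (A*B)\vee(C\rtimes B)$ from \cite{GT1} and to check that each step of it is compatible with the maps $a$, $b$ and $c$ (which we take to be based). Recall that the equivalence is built in three steps. First, since $f$ is null homotopic via $H$, the homotopy $H \times id_B$ deforms $f \times id_B$ to $* \times id_B$, so $Q$ is homotopy equivalent to the homotopy pushout $Q_0$ of $A \xleftarrow{\pi_1} A \times B \xrightarrow{* \times id_B} C \times B$. Second, $* \times id_B$ factors as $A \times B \xrightarrow{\pi_2} B \xrightarrow{j} C \times B$ with $j(b) = (*,b)$, so pasting homotopy pushouts identifies $Q_0$ with the homotopy pushout of $A*B \xleftarrow{k} B \xrightarrow{j} C \times B$, where $A*B$ is the homotopy pushout of the projections $A \xleftarrow{\pi_1} A\times B \xrightarrow{\pi_2} B$ and $k$ is the induced map. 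Third, $k$ is null homotopic by sliding along the join coordinate and $j$ is a cofibration, so this last homotopy pushout splits as $(A*B)\vee\bigl((C\times B)\cup_j CB\bigr)\simeq(A*B)\vee(C\rtimes B)$.

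I would then run the same three steps for the primed data and compare via $a$, $b$ and $c$. First I would observe that evaluating (\ref{eq:diagramsfornatural}) at $t=0$ gives $f'\circ a = c\circ f$; together with $\pi_1\circ(a\times b)=a\circ\pi_1$ this makes $(a\times b,\, c\times b,\, a)$ a strict morphism between the pushout squares (\ref{eq:pushoutsfornatural}), hence it induces a map $Q\to Q'$. Likewise $a$ and $b$ induce $a*b\colon A*B\to A'*B'$, and $c$ and $b$ induce $c\rtimes b\colon C\rtimes B\to C'\rtimes B'$, giving $(a*b)\perp(c\rtimes b)$ on the wedges. The goal is to show that the square built from $Q\to Q'$, $(a*b)\perp(c\rtimes b)$ and the two equivalences of Lemma~\ref{coordinatehtpyequiv} homotopy commutes, and I would verify this one step at a time: the passage $Q \rightsquigarrow Q_0$ is natural because the deformations $H\times id_B$ and $H'\times id_{B'}$ are intertwined by $a$, $b$, $c$ — this is precisely hypothesis (\ref{eq:diagramsfornatural}), equivalently the commuting square (\ref{quotientdiagfornatural}); the pasting step and the description of $A*B$ as the homotopy pushout of two projections are natural because products and homotopy pushouts are functorial; and in the splitting step $j\colon B\hookrightarrow C\times B$ is carried to $j'\colon B'\hookrightarrow C'\times B'$ by $c\times b$, so the cofibre identification $(C\times B)\cup_j CB\simeq C\rtimes B$ is natural in $c$ and $b$, while the null homotopy of $k$ (contracting the join parameter towards the $A$-vertex) is preserved by the based map $a*b$.

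I expect the main obstacle to be the bookkeeping of homotopies in the splitting step. The general fact that a homotopy pushout of $Y_1\xleftarrow{\ell}Z\xrightarrow{j}Y_2$ with $\ell$ null homotopic and $j$ a cofibration is homotopy equivalent to $Y_1\vee(Y_2\cup_j CZ)$ depends on a choice of null homotopy of $\ell$, and one must check that the chosen null homotopies of $k$ and of its primed analogue correspond under $a*b$ and $b$; this holds because in both cases one uses the canonical join-coordinate contraction, but pinning this down — and, similarly, fixing the intermediate homotopy equivalences so that primed and unprimed choices are compatible — is the technical heart of the argument. Once these compatibilities are in place, stacking the resulting homotopy-commutative squares yields the naturality of the equivalence of Lemma~\ref{coordinatehtpyequiv} with respect to $a$, $b$ and $c$.
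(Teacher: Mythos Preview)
Your outline is correct and will yield the result, but it takes a somewhat different route from the paper's proof. The paper does not track homotopies through a three-step deformation; instead it encodes the null homotopy $H$ once and for all as a strict map $\overline{f}\colon CA\to C$ (this is exactly the passage from (\ref{eq:diagramsfornatural}) to (\ref{quotientdiagfornatural})), replaces $\pi_1$ by the cofibration $A\times B\hookrightarrow A\times CB$, and then builds the equivalence as a $2\times 2$ grid of \emph{strict} pushouts. The left column gives $A*B$ directly, the top-right pushout gives a space $N=(C\times B)\cup_B CB\simeq C\rtimes B$, the space $M$ in the middle is contractible, and the bottom-right square produces the map $(A*B)\vee(C\rtimes B)\to\overline{Q}$. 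Naturality then comes almost for free: every map in the grid is either visibly natural in $a$, $b$, $c$, or is $\overline{f}\times id_B$, whose naturality is precisely the hypothesis (\ref{quotientdiagfornatural}); the induced pushout maps $\alpha$, $\lambda$, $\psi$ are then natural by functoriality of strict pushouts, and the collapse $\epsilon\colon N\to C\rtimes B$ is natural in $c$ and $b$.

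The practical difference is exactly the issue you flag as the ``main obstacle'': by passing to strict pushouts and absorbing $H$ into $\overline{f}$, the paper eliminates the need to compare chosen null homotopies of $k$ and $k'$ or to check that intermediate homotopy equivalences are compatibly chosen. Your sequential approach works, but you will have to do that bookkeeping honestly; the paper's diagrammatic reformulation buys you a cleaner verification at the cost of setting up the $2\times 2$ grid.
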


\begin{proof}
Consider the homotopy pushout \[\begin{tikzcd}
	{A \times B} & {C \times B} \\
	A & Q.
	\arrow[from=1-2, to=2-2]
	\arrow[from=2-1, to=2-2]
	\arrow["{\pi_1}", from=1-1, to=2-1]
	\arrow["{f \times id_B}", from=1-1, to=1-2]
\end{tikzcd}\] Up to homotopy equivalences, replace the projection $A \times B \xrightarrow{\pi_1} A$ with the inclusion $A \times B \hookrightarrow A \times CB$ and define $\overline{Q}$ as the strict pushout \begin{equation}\label{strictnaturalpushout}\begin{tikzcd}
	{A \times B} & {C \times B} \\
	{A \times CB} & {\overline{Q}.}
	\arrow["{f \times id_B}", from=1-1, to=1-2]
	\arrow[hook , from=1-1, to=2-1]
	\arrow[from=2-1, to=2-2]
	\arrow["\beta", from=1-2, to=2-2]
\end{tikzcd}\end{equation} Note that $Q \simeq \overline{Q}$.

The map $f$ factors as the composite $A \hookrightarrow CA \xrightarrow{\overline{f}} C$, and so the pushout (\ref{strictnaturalpushout}) is equivalent to the iterated pushout \begin{equation}\label{iteratedpushoutnatural}\begin{tikzcd}
	{A \times B} & {CA \times B} & {C \times B} \\
	{A\times CB} & {A*B} & \overline{Q}
	\arrow[hook, from=1-1, to=2-1]
	\arrow[hook, from=1-2, to=2-2]
	\arrow[hook, from=2-1, to=2-2]
	\arrow["{\alpha}",from=2-2, to=2-3]
	\arrow["{\overline{f} \times id_B}", from=1-2, to=1-3]
	\arrow["{\beta}",from=1-3, to=2-3]
	\arrow[hook, from=1-1, to=1-2]
\end{tikzcd}\end{equation} which defines the map $\alpha$. Further, the inclusion $A \times B \hookrightarrow A \times CB$ factors as the composite $A \times B \hookrightarrow A \times B \cup_B CB \hookrightarrow A \times CB$. This implies that (\ref{iteratedpushoutnatural}) gives rise to the commutative diagram of pushouts \begin{equation}\label{iteratedpushforequiv}\begin{tikzcd}
	{A \times B} & {CA \times B} & {C \times B} \\
	{A \times B \cup_B CB} & M & N \\
	{A \times CB} & {A*B} & {\overline{Q}}
	\arrow["{\overline{f} \times id_B}", from=1-2, to=1-3]
	\arrow["\lambda", from=2-2, to=2-3]
	\arrow[from=1-3, to=2-3]
	\arrow[hook, from=1-2, to=2-2]
	\arrow[hook, from=1-1, to=1-2]
	\arrow[hook, from=1-1, to=2-1]
	\arrow[from=2-1, to=2-2]
	\arrow[from=2-1, to=3-1]
	\arrow[from=2-2, to=3-2]
	\arrow[hook, from=3-1, to=3-2]
	\arrow["\psi", from=2-3, to=3-3]
	\arrow["\alpha", from=3-2, to=3-3]
\end{tikzcd}\end{equation} defining spaces $M$ and $N$ and maps $\lambda$ and $\psi$. In particular, from the top squares in (\ref{iteratedpushforequiv}), $M = CA \times B \cup_{B} CB$ and $N = (C \times B) \cup_B CB$. Observe that in $N$, by contracting $CB$ to the cone point, we have a homotopy equivalence $\epsilon: N \xrightarrow{\simeq} C \rtimes B$. Now for $M$, by contracting $CB$ to the cone point, there is a homotopy equivalence $CA \times B \cup_{B} CB \simeq CA \rtimes B$. However, since $CA$ is contractible, it follows that $M$ is also contractible. Let $\overline{\gamma}:C \rtimes B \rightarrow \overline{Q}$ be the composite \[\overline{\gamma}:C \rtimes B \xrightarrow{\epsilon^{-1}} N \xrightarrow{\psi} \overline{Q}.\] Since $M \simeq *$ and $N \simeq C \rtimes B$, the bottom right square in (\ref{iteratedpushforequiv}) which is a pushout, implies that \[(A*B) \vee (C \rtimes B) \xrightarrow{\alpha \perp \overline{\gamma}} \overline{Q}\] is a homotopy equivalence.  

This recovers the result in Lemma ~\ref{coordinatehtpyequiv} but in a form that is better for proving naturality. All the maps in the two left squares of (\ref{iteratedpushforequiv}) are natural. By hypothesis, (\ref{quotientdiagfornatural}) commutes so $\overline{f}$ is natural with respect to such diagrams. The naturality of $\lambda$, $\psi$ and $\alpha$ then follow as they are induced pushout maps. The homotopy equivalence $\epsilon: N \rightarrow C \rtimes B$ is natural for maps $B \xrightarrow{b} B'$, $C \xrightarrow{c} C'$, giving a map $m:M \rightarrow M'$ and a homotopy commutative diagram \[\begin{tikzcd}
	M & {C \rtimes B} \\
	{M'} & {C' \rtimes B'.}
	\arrow["\epsilon", from=1-1, to=1-2]
	\arrow["{c \rtimes b}", from=1-2, to=2-2]
	\arrow["m", from=1-1, to=2-1]
	\arrow["\epsilon'", from=2-1, to=2-2]
\end{tikzcd}\]  This implies $\overline{\gamma}$ is natural as it is a composite of natural maps. Hence, the homotopy equivalence $(A*B) \vee (C \rtimes B) \xrightarrow{\alpha \perp \overline{\gamma}} \overline{Q}$ is natural.
\end{proof}

\section{The generalised fold map}
\label{sec:symsimpdecomp}

In this section, we define a generalised fold map and determine the homotopy type of its fibre. For $n \geq 2$, let $X_1,\cdots,X_n$ be homeomorphic, path-connected CW-complexes and for $1 \leq i < j \leq n$, let $\phi_{i,j}:X_i \rightarrow X_j$ be a homeomorphism. Let $A_1$, $A_2$ be homeomorphic subcomplexes of $X_1$. Also, assume there is an automorphism $\psi$ of $X_1$ such that $\psi(A_1) = A_2$ and $\psi(A_2) = A_1$. Define the space $P_2$ via the pushout \[\begin{tikzcd}
	{A_2} & {X_2} \\
	{X_1} & {P_2}
	\arrow["{f_2}", from=1-1, to=1-2]
	\arrow[from=1-2, to=2-2]
	\arrow[from=1-1, to=2-1]
	\arrow[from=2-1, to=2-2]
\end{tikzcd}\] where $f_2$ is the composite \[A_2 \xrightarrow{\psi|_{A_2}} A_1 \xrightarrow{\phi_{1,2}|_{A_1}} \phi_{1,2}(A_1) \hookrightarrow X_2.\] This corresponds to gluing $X_{1}$ to $X_2$ by gluing the copy of $A_{2}$ in $X_1$ to the copy of $A_{1}$ in $X_2$. Inductively define $P_n$ as the pushout \begin{equation}\label{definitionofPn}\begin{tikzcd}
	{\phi_{1,n-1}(A_{2})} & {X_n} \\
	{P_{n-1}} & {P_n}
	\arrow[from=2-1, to=2-2]
	\arrow[from=1-2, to=2-2]
	\arrow["{f_n}", from=1-1, to=1-2]
	\arrow[hook, from=1-1, to=2-1]
\end{tikzcd}\end{equation} where $f_n$ is the composite \[\phi_{1,n-1}(A_{2}) \xrightarrow{\phi_{1,n-1}^{-1}|_{\phi_{1,n-1}(A_2)}} A_2 \xrightarrow{\psi|_{A_2}} A_1 \xrightarrow{\phi_{1,n}|_{A_1}} \phi_{1,n}(A_1) \hookrightarrow X_n.\] This corresponds to gluing the copy of $X_{n-1}$ contained in $P_{n-1}$ to $X_n$ by gluing the copy of $A_{2}$ in $X_{n-1}$ to the copy of $A_{1}$ in $X_n$. We exploit the symmetry of the space $P_n$ to define a map $g_n$ called the \textit{fold map}. 

\begin{proposition}
\label{pushoutdefined}
For $n \geq 2$, there exists a pushout map $g_n$ making the following diagram commute \begin{equation}\label{eq:Fndefinition}\begin{tikzcd}
	{\phi_{1,n-1}(A_2)} & {X_n} \\
	{P_{n-1}} & {P_n} \\
	&& {X_1}
	\arrow[from=1-1, to=2-1]
	\arrow["f_n", from=1-1, to=1-2]
	\arrow[from=1-2, to=2-2]
	\arrow[from=2-1, to=2-2]
	\arrow["{g_{n-1}}", curve={height=12pt}, from=2-1, to=3-3]
	\arrow["{g_n}", from=2-2, to=3-3]
	\arrow["\psi_n \circ \phi_{1,n}^{-1}", curve={height=-12pt}, from=1-2, to=3-3]
\end{tikzcd}\end{equation} where $\psi_n$ is the identity on $X_1$ when $n$ is odd and $\psi_n = \psi^{-1}$ when $n$ is even. Moreover, $g_n$ maps $\phi_{1,n-1}(A_2)$ to $A_2$ when $n$ is even and to $A_1$ when $n$ is odd.
\end{proposition}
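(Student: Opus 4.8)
The plan is to induct on $n$, using at each stage only the universal property of the pushout (\ref{definitionofPn}) that defines $P_n$. It is convenient to set $P_1 = X_1$, $g_1 = \mathrm{id}_{X_1}$, $\psi_1 = \mathrm{id}_{X_1}$ and $\phi_{1,1} = \mathrm{id}_{X_1}$, so that the assertion for $n=2$ already has the shape claimed for general $n$, and the curved arrow labelled $g_{n-1}$ in (\ref{eq:Fndefinition}) makes sense in the base case.

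For the base case $n=2$, the two legs of the pushout defining $P_2$ are the inclusion $A_2 \hookrightarrow X_1$ and $f_2 : A_2 \to X_2$. I would feed the maps $\mathrm{id}_{X_1} : X_1 \to X_1$ and $\psi^{-1}\circ\phi_{1,2}^{-1} : X_2 \to X_1$ into the universal property. The one thing to check is that these agree after restriction to $A_2$: expanding $f_2$ and cancelling $\phi_{1,2}^{-1}$ against $\phi_{1,2}|_{A_1}$ reduces $\psi^{-1}\circ\phi_{1,2}^{-1}\circ f_2$ to $\psi^{-1}\circ(A_1\hookrightarrow X_1)\circ\psi|_{A_2}$, which is just the inclusion $A_2\hookrightarrow X_1$ since $\psi^{-1}\psi = \mathrm{id}$. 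This produces $g_2 : P_2\to X_1$ restricting to $\mathrm{id}_{X_1}$ on $X_1$ and to $\psi^{-1}\phi_{1,2}^{-1} = \psi_2\circ\phi_{1,2}^{-1}$ on $X_2$, which is exactly (\ref{eq:Fndefinition}) for $n=2$; and since $\phi_{1,1}(A_2)=A_2\subseteq X_1$ and $g_2|_{X_1}=\mathrm{id}$, we get $g_2(\phi_{1,1}(A_2))=A_2$, as required in the even case.

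For the inductive step, assume $g_{n-1}:P_{n-1}\to X_1$ exists and fits diagram (\ref{eq:Fndefinition}) at stage $n-1$; in particular its restriction to the copy of $X_{n-1}$ inside $P_{n-1}$ is $\psi_{n-1}\circ\phi_{1,n-1}^{-1}$. The key observation is that the subcomplex $\phi_{1,n-1}(A_2)$ along which $X_n$ is attached in (\ref{definitionofPn}) lies inside that copy of $X_{n-1}$ (because $\phi_{1,n-1}$ carries $X_1$ to $X_{n-1}$), so the restriction of $g_{n-1}$ to $\phi_{1,n-1}(A_2)$ is known: it is $\phi_{1,n-1}(A_2)\xrightarrow{\phi_{1,n-1}^{-1}}A_2\hookrightarrow X_1\xrightarrow{\psi_{n-1}}X_1$. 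On the other leg, composing $\psi_n\circ\phi_{1,n}^{-1}$ with $f_n$ and cancelling $\phi_{1,n}^{-1}$ against $\phi_{1,n}|_{A_1}$ gives $\phi_{1,n-1}(A_2)\xrightarrow{\phi_{1,n-1}^{-1}}A_2\xrightarrow{\psi}A_1\hookrightarrow X_1\xrightarrow{\psi_n}X_1$. These two composites agree precisely when $\psi_{n-1}|_{A_2} = (\psi_n\circ\psi)|_{A_2}$, and this is what the parity-dependent definition of $\psi_n$ arranges: one checks the cases $n$ even and $n$ odd separately (when $n$ is even both composites are the inclusion $A_2\hookrightarrow X_1$ since $\psi^{-1}\psi=\mathrm{id}$). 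The universal property of (\ref{definitionofPn}) then yields $g_n:P_n\to X_1$ restricting to $g_{n-1}$ on $P_{n-1}$ and to $\psi_n\phi_{1,n}^{-1}$ on $X_n$, i.e.\ (\ref{eq:Fndefinition}). The ``moreover'' clause then follows by inspection: $\phi_{1,n-1}(A_2)\subseteq X_{n-1}\subseteq P_{n-1}$ and $g_n|_{P_{n-1}}=g_{n-1}$, so $g_n$ sends $\phi_{1,n-1}(A_2)$ to $\psi_{n-1}(A_2)$, which is $A_2$ when $n$ is even (as $\psi_{n-1}=\mathrm{id}$) and $\psi^{-1}(A_2)=A_1$ when $n$ is odd.

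The only genuine difficulty is the bookkeeping: at each application of the various $\phi_{1,k}$, $\phi_{1,k}^{-1}$, $\psi$ and $\psi^{-1}$ one must track which of $A_1$ or $A_2$ the current attaching subcomplex sits inside, and verify that the parity of $\psi_n$ is exactly the one that reconciles the two descriptions of $g_n$ on $\phi_{1,n-1}(A_2)$; the identification of $\phi_{1,n-1}(A_2)$ as a subcomplex of the fresh copy $X_{n-1}\subseteq P_{n-1}$ is precisely what makes the previous stage's diagram applicable. Everything else is a formal consequence of the universal property of pushouts.
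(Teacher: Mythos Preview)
Your argument is essentially the paper's: both induct on $n$, feed $g_{n-1}$ and $\psi_n\circ\phi_{1,n}^{-1}$ into the universal property of the pushout defining $P_n$, and split into parity cases to check agreement on the attaching locus $\phi_{1,n-1}(A_2)$. You are in fact more explicit than the paper in isolating the exact identity that must hold, namely $\psi_{n-1}|_{A_2}=(\psi_n\circ\psi)|_{A_2}$.

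One caution on the step you leave as ``one checks'': for $n$ odd that identity reads $\psi^{-1}|_{A_2}=\psi|_{A_2}$, and the stated hypotheses $\psi(A_1)=A_2$, $\psi(A_2)=A_1$ only force $\psi$ to swap $A_1$ and $A_2$ as subsets, not that $\psi^2$ is the identity on $A_2$. The paper's own treatment of the odd case glosses over the same point, arguing only that both composites have image contained in $A_1$, which is weaker than equality of maps. In every application in the paper $\psi$ is taken to be the identity (or at worst an involution), so nothing goes wrong in practice; but strictly speaking the odd inductive step needs this extra assumption, and you should flag it rather than fold it into ``one checks''.
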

\begin{proof}
First consider the case $n=2$. By definition, $P_2$ is the pushout \[\begin{tikzcd}
	{A_2} & {X_2} \\
	{X_1} & {P_2.}
	\arrow["{f_2}", from=1-1, to=1-2]
	\arrow[from=1-2, to=2-2]
	\arrow[from=1-1, to=2-1]
	\arrow[from=2-1, to=2-2]
\end{tikzcd}\] Observe that by the definition of $\phi_{1,2}$ and $\psi$, the following diagram commutes \[\begin{tikzcd}
	{A_2} & {X_2} & {X_1} & {X_1} \\
	&&& {X_1}.
	\arrow["{f_2}", from=1-1, to=1-2]
	\arrow["{\phi_{1,2}^{-1}}", from=1-2, to=1-3]
	\arrow["\psi^{-1}", from=1-3, to=1-4]
	\arrow[Rightarrow, no head, from=1-4, to=2-4]
	\arrow[hook, from=1-1, to=2-4]
\end{tikzcd}\] Therefore, by the universal property of a pushout, we obtain a pushout map $g_2:P_2 \rightarrow X_1$ resulting in a commutative diagram \[\begin{tikzcd}
	{A_2} & {X_2} \\
	{X_1} & {P_2} \\
	&& {X_1}.
	\arrow[hook, from=1-1, to=2-1]
	\arrow["{f_2}", from=1-1, to=1-2]
	\arrow[from=1-2, to=2-2]
	\arrow[from=2-1, to=2-2]
	\arrow["{g_2}", from=2-2, to=3-3]
	\arrow["{\psi^{-1} \circ \phi_{1,2}^{-1}}", curve={height=-12pt}, from=1-2, to=3-3]
	\arrow["{id_{X_1}}", curve={height=12pt}, from=2-1, to=3-3]
\end{tikzcd}\]

Now suppose the result is true for $n-1$. First consider the case $n$ is odd and so $n-1$ is even. Consider the map $X_{n-1} \xrightarrow{\psi^{-1} \circ \phi_{1,n-1}^{-1}} X_1$ restricted to the subspace $\phi_{1,n-1}(A_2)$. The composite $\psi^{-1} \circ \phi_{1,n-1}^{-1}$ maps $\phi_{1,n-1}(A_2)$ to $A_1$ by the definition of $\psi$. Therefore, by the commutativity of \[\begin{tikzcd}
	{X_{n-1}} \\
	{P_{n-1}} & {X_1}
	\arrow[hook, from=1-1, to=2-1]
	\arrow["{g_{n-1}}", from=2-1, to=2-2]
	\arrow["{\psi^{-1}\circ \phi_{1,n-1}^{-1}}", from=1-1, to=2-2]
\end{tikzcd}\] in the inductive hypothesis, $\phi_{1,n-1}(A_2)$ is mapped to $A_1$ by $g_{n-1}$. Now, $\phi_{1,n-1}(A_2)$ is mapped to $\phi_{1,n}(A_1)$ in $X_n$ by $f_n$, as $f_n$ is the composite \[\phi_{1,n-1}(A_{2}) \xrightarrow{\phi_{1,n-1}^{-1}|_{\phi_{1,n-1}(A_2)}} A_2 \xrightarrow{\psi|_{A_2}} A_1 \xrightarrow{\phi_{1,n}|_{A_1}} \phi_{1,n}(A_1) \hookrightarrow X_n.\] Therefore, the composite $\phi^{-1}_{1,n} \circ f_n$ maps $\phi_{1,n-1}(A_1)$ to $A_1$ and so (\ref{eq:Fndefinition}) commutes in this case.

Now suppose $n$ is even and so $n-1$ is odd. Consider the map $X_{n-1} \xrightarrow{\phi_{1,n-1}^{-1}} X_1$ restricted to the subspace $\phi_{1,n-1}(A_2)$. The map $\phi_{1,n-1}^{-1}$ sends $\phi_{1,n-1}(A_2)$ to $A_2$. By the commutativity of \[\begin{tikzcd}
	{X_{n-1}} \\
	{P_{n-1}} & {X_1}
	\arrow[hook, from=1-1, to=2-1]
	\arrow["{g_{n-1}}", from=2-1, to=2-2]
	\arrow["{\phi_{1,n-1}^{-1}}", from=1-1, to=2-2]
\end{tikzcd}\] in the inductive hypothesis, $\phi_{1,n-1}(A_2)$ is mapped to $A_2$ by $g_{n-1}$. However, $\phi_{1,n-1}(A_2)$ is mapped to $\phi_{1,n}(A_1)$ in $X_n$ by $f_n$, and therefore the composite $\psi^{-1} \circ \phi^{-1}_{1,n} \circ f_n$ maps $\phi_{1,n-1}(A_2)$ to $A_2$ so (\ref{eq:Fndefinition}) commutes in this case.
\end{proof}

Intuitively, $g_n$ takes each copy of $X_i$ contained in $P_n$ and folds it down onto the copy of $X_1$. Now consider the homotopy fibration \[F_{P_n} \rightarrow P_n \xrightarrow{g_n} X_1\] that defines the space $F_{P_n}$. The composite \[X_1 \xrightarrow{\phi_{1,n} \circ \psi_n^{-1}} X_n \hookrightarrow P_n \xrightarrow{g_n} X_1\] is the identity map, since $\phi_{1,n} \circ \psi_n^{-1}$ is the inverse for the map $\psi_n \circ \phi_{1,n}^{-1}$ in (\ref{eq:Fndefinition}), and hence $g_n$ has a right homotopy inverse. This gives us the following result. 

\begin{lemma}
\label{Fsplitting}
The homotopy fibration $F_{P_n} \rightarrow P_n \xrightarrow{g_n} X_1$ splits after looping to give a homotopy equivalence \[\Omega P_n \simeq \Omega X_1 \times \Omega F_{P_n}.\]
\qedno
\end{lemma}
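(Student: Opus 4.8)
The plan is to deduce this entirely from the fact recorded in the discussion immediately preceding the lemma: the fold map $g_n$ admits a right homotopy inverse, namely the composite $\sigma\colon X_1 \xrightarrow{\phi_{1,n}\circ\psi_n^{-1}} X_n \hookrightarrow P_n$, which satisfies $g_n\circ\sigma\simeq \mathrm{id}_{X_1}$. Everything then follows from the standard principle that a homotopy fibration $F\xrightarrow{j} E\xrightarrow{p} B$ whose projection $p$ has a right homotopy inverse splits after looping as $\Omega E\simeq\Omega B\times\Omega F$. I would either invoke this as well known or include the short self-contained argument sketched below.

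To prove the principle, write $m$ for the loop concatenation on $\Omega E$ and consider the composite
\[
\Theta\colon \Omega B\times\Omega F \xrightarrow{\Omega\sigma\times\Omega j} \Omega E\times\Omega E \xrightarrow{m} \Omega E .
\]
Since $\Omega p$ commutes with loop concatenation and $\Omega p\circ\Omega\sigma\simeq\mathrm{id}_{\Omega B}$ while $\Omega p\circ\Omega j\simeq\ast$ (the composite $F\to E\to B$ is null homotopic), post-composing with $\Omega p$ gives $\Omega p\circ\Theta\simeq\pi_1$, the projection onto $\Omega B$. Thus $\Theta$ is a map from the trivial fibration $\Omega F\to\Omega B\times\Omega F\xrightarrow{\pi_1}\Omega B$ to the looped fibration $\Omega F\xrightarrow{\Omega j}\Omega E\xrightarrow{\Omega p}\Omega B$ covering the identity of $\Omega B$. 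Restricting $\Theta$ to the fibre $\ast\times\Omega F$ yields $m(e,\Omega j(-))\simeq\Omega j$, so the induced map of fibres is homotopic to $\mathrm{id}_{\Omega F}$. Comparing the two long exact sequences of homotopy groups and applying the five lemma shows $\Theta$ induces an isomorphism on all homotopy groups, and since all spaces in sight are $CW$-complexes, Whitehead's theorem gives that $\Theta$ is a homotopy equivalence. (Equivalently, this is the case of Lemma~\ref{Fibrethomotopyequiv} in which the right-hand fibration is taken to be $F_{P_n}\to P_n\xrightarrow{g_n} X_1$ and the middle one is $\ast\to\ast\to\ast$, so that $G\simeq\Omega F_{P_n}$, $H\simeq\Omega P_n$, $J\simeq\Omega X_1$, and the required right homotopy inverse of $t\circ\Omega g_n$ is $\Omega\sigma$.)

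Applying the principle to $F_{P_n}\to P_n\xrightarrow{g_n} X_1$ with the right homotopy inverse $\sigma$ then gives the asserted homotopy equivalence $\Omega P_n\simeq\Omega X_1\times\Omega F_{P_n}$. There is no genuine obstacle: the real work was already done in Proposition~\ref{pushoutdefined} and the surrounding discussion, where the section $\sigma$ of $g_n$ was produced. The only point needing any care is the verification that the fibre comparison map of $\Theta$ is homotopic to the identity, and this uses nothing beyond the fact that $\Omega p$ is an H-map and that $m$ is unital up to homotopy.
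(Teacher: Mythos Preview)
Your proposal is correct and follows exactly the paper's approach: the paper gives no proof at all for this lemma (note the \texttt{\textbackslash qedno}), regarding it as immediate from the sentence just before it, namely that $g_n$ admits the right homotopy inverse $X_1\xrightarrow{\phi_{1,n}\circ\psi_n^{-1}} X_n\hookrightarrow P_n$. Your write-up simply fills in the well-known splitting argument (via loop multiplication and the five lemma, or equivalently via the degenerate instance of Lemma~\ref{Fibrethomotopyequiv} you indicate), which is exactly what the paper is tacitly invoking.
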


Recall from Proposition ~\ref{pushoutdefined} the definition of $g_n$ as the pushout map \begin{equation}\label{eq:usethecube}\begin{tikzcd}
	{\phi_{1,n-1}(A_2)} & {X_n} \\
	{P_{n-1}} & {P_n} \\
	&& {X_1.}
	\arrow["{g_n}", from=2-2, to=3-3]
	\arrow["{\psi_n^{-1} \circ \phi_{1,n}^{-1}}", curve={height=-12pt}, from=1-2, to=3-3]
	\arrow["{g_{n-1}}", curve={height=12pt}, from=2-1, to=3-3]
	\arrow[from=2-1, to=2-2]
	\arrow[from=1-2, to=2-2]
	\arrow["{f_n}", from=1-1, to=1-2]
	\arrow[hook, from=1-1, to=2-1]
\end{tikzcd}\end{equation} For $X$ one of $\phi_{1,n-1}(A_2)$, $X_n$, $P_{n-1}$, or $P_n$, let $F_X$ be the homotopy fibre of the composite $X \hookrightarrow P_n \xrightarrow{g_n} X_1$. Since all the homotopy fibres are given by composing into a common base, the following cube homotopy commutes \begin{equation}\label{cubeforpushoutfibres}\begin{tikzcd}[row sep=scriptsize, column sep=scriptsize]
 F_{\phi_{1,n-1}(A_2)} \arrow[dr]{} \arrow[rr] \arrow[dd] & & F_{X_n} \arrow[dr] \arrow[dd] &  \\
& F_{P_{n-1}} \arrow[rr, crossing over] \arrow[dd] & & F_{P_n} \\
 \phi_{1,n-1}(A_2) \arrow[rr] \arrow[dr] & & X_n \arrow[dr] &  \\
& P_{n-1} \arrow[rr] & & P_n \arrow[from=uu, crossing over]\\
\end{tikzcd}\end{equation} where the sides are homotopy pullbacks and the bottom face is a homotopy pushout. Therefore by Theorem \ref{cubelemma}, the top face is a homotopy pushout. We can identify the space $F_{P_n}$ in the top face of the cube more precisely.  
\begin{proposition}
\label{fibredecomp}
For $n \geq 2$, there is a homotopy equivalence \[F_{P_n} \simeq \bigvee\limits_{i=1}^{n-1} \Sigma F_{A_2}\] where $F_{A_2}$ is the homotopy fibre of the inclusion $A_2 \hookrightarrow X_1$. 
\end{proposition}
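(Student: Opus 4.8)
The plan is to induct on $n$, using the homotopy pushout on homotopy fibres that the cube lemma already provides. Recall that in the cube (\ref{cubeforpushoutfibres}) the four sides are homotopy pullbacks and the bottom face is a homotopy pushout, so by Theorem \ref{cubelemma} the top face is a homotopy pushout; hence $F_{P_n}$ is the homotopy pushout of $F_{X_n}\leftarrow F_{\phi_{1,n-1}(A_2)}\rightarrow F_{P_{n-1}}$, and it remains to identify these three spaces and the two maps between them.

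First I would identify the corner spaces using Proposition \ref{pushoutdefined}. By the commutativity of (\ref{eq:Fndefinition}), the composite $X_n\hookrightarrow P_n\xrightarrow{g_n}X_1$ is the homeomorphism $\psi_n\circ\phi_{1,n}^{-1}$, so $F_{X_n}\simeq *$. Likewise $P_{n-1}\hookrightarrow P_n\xrightarrow{g_n}X_1$ is $g_{n-1}$, so $F_{P_{n-1}}$ is exactly the homotopy fibre at stage $n-1$, which by the inductive hypothesis is $\bigvee_{i=1}^{n-2}\Sigma F_{A_2}$. For $F_{\phi_{1,n-1}(A_2)}$, the composite $\phi_{1,n-1}(A_2)\hookrightarrow P_n\xrightarrow{g_n}X_1$ agrees with $\phi_{1,n-1}(A_2)\hookrightarrow P_{n-1}\xrightarrow{g_{n-1}}X_1$, which by Proposition \ref{pushoutdefined} is injective with image $A_1$ (if $n$ is odd) or $A_2$ (if $n$ is even). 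Since $\psi$ is a homeomorphism of pairs carrying $(X_1,A_1)$ to $(X_1,A_2)$, the homotopy fibre of either inclusion $A_1\hookrightarrow X_1$ or $A_2\hookrightarrow X_1$ is homotopy equivalent to $F_{A_2}$, giving $F_{\phi_{1,n-1}(A_2)}\simeq F_{A_2}$. The base case $n=2$ is the same analysis: the bottom face of the cube is the pushout defining $P_2$, both $F_{X_2}$ and the homotopy fibre of $X_1\hookrightarrow P_2\xrightarrow{g_2}X_1$ are contractible (the relevant composites into $X_1$ being $\psi^{-1}\circ\phi_{1,2}^{-1}$ and $\mathrm{id}_{X_1}$), so $F_{P_2}$ is the homotopy pushout of $*\leftarrow F_{A_2}\rightarrow *$, namely $\Sigma F_{A_2}$.

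The crux is to show the map $F_{\phi_{1,n-1}(A_2)}\to F_{P_{n-1}}$ in the top face is null homotopic. Here I would use that the inclusion $\phi_{1,n-1}(A_2)\hookrightarrow P_{n-1}$ factors through the copy of $X_{n-1}$ that is glued on in forming $P_{n-1}$. Passing to homotopy fibres over $X_1$ via $g_{n-1}$, the induced map $F_{\phi_{1,n-1}(A_2)}\to F_{P_{n-1}}$ then factors through $F_{X_{n-1}}$, the homotopy fibre of $X_{n-1}\hookrightarrow P_{n-1}\xrightarrow{g_{n-1}}X_1$; but this composite is again a homeomorphism by (\ref{eq:Fndefinition}) (and is $\mathrm{id}_{X_1}$ when $n=2$), so $F_{X_{n-1}}\simeq *$ and the map is null homotopic. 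I expect this factorisation/naturality bookkeeping — checking that the chain $\phi_{1,n-1}(A_2)\hookrightarrow X_{n-1}\hookrightarrow P_{n-1}$, all taken over the common base $X_1$, induces the claimed composite on homotopy fibres — to be the only genuinely delicate point; it should be routine since every arrow in sight is a restriction of a structure map already used to build the cube.

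Finally, with $F_{X_n}\simeq *$ and the map $F_{A_2}\simeq F_{\phi_{1,n-1}(A_2)}\to F_{P_{n-1}}\simeq\bigvee_{i=1}^{n-2}\Sigma F_{A_2}$ null homotopic, the homotopy pushout $F_{P_n}$ is the mapping cone of a null homotopic map out of $F_{A_2}$, hence
\[F_{P_n}\simeq\Sigma F_{A_2}\vee\Bigl(\bigvee_{i=1}^{n-2}\Sigma F_{A_2}\Bigr)=\bigvee_{i=1}^{n-1}\Sigma F_{A_2},\]
which completes the induction.
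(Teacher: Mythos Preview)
Your proposal is correct and follows essentially the same route as the paper's proof: induct using the cube lemma, handle $n=2$ directly, and show the map $F_{\phi_{1,n-1}(A_2)}\to F_{P_{n-1}}$ is null homotopic by factoring through the contractible fibre over $X_{n-1}$. The paper makes this last step explicit via large ladder diagrams (separating the cases $n$ even and $n$ odd to track whether $g_{n-1}$ sends $\phi_{1,n-1}(A_2)$ to $A_1$ or $A_2$), but your more direct observation that $\phi_{1,n-1}(A_2)\hookrightarrow P_{n-1}$ factors through $X_{n-1}\hookrightarrow P_{n-1}$, combined with (\ref{eq:Fndefinition}) showing $g_{n-1}|_{X_{n-1}}$ is a homeomorphism, accomplishes the same thing more economically.
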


\begin{proof} This proof will be done in three steps. First, the $n=2$ case will be proved. Then, it will be shown that the induced map of fibres $F_{\phi_{1,n-1}(A_2)} \rightarrow F_{P_{n-1}}$ is null homotopic. Finally, this will allow us to inductively prove the proposition for $n \geq 3$.

\textit{Step 1: } By Proposition ~\ref{pushoutdefined}, we have a commutative diagram \[\begin{tikzcd}
	{A_1} & {X_2} \\
	{X_1} & {P_2} \\
	&& {X_1}.
	\arrow["{g_2}", from=2-2, to=3-3]
	\arrow["{f_2}", from=1-1, to=1-2]
	\arrow[hook, from=1-1, to=2-1]
	\arrow["{id_{X_1}}", curve={height=12pt}, from=2-1, to=3-3]
	\arrow[from=1-2, to=2-2]
	\arrow[from=2-1, to=2-2]
	\arrow["{\psi^{-1} \circ \phi_{1,2}^{-1}}", curve={height=-12pt}, from=1-2, to=3-3]
\end{tikzcd}\] By the top face of (\ref{cubeforpushoutfibres}) and the fact the homotopy fibre of a homeomorphism is contractible, we obtain a homotopy pushout of fibres \[\begin{tikzcd}
	{F_{A_2}} & {*} \\
	{*} & {F_{P_2}.}
	\arrow[from=1-1, to=1-2]
	\arrow[from=1-2, to=2-2]
	\arrow[from=1-1, to=2-1]
	\arrow[from=2-1, to=2-2]
\end{tikzcd}\] Therefore, $F_{P_2} \simeq \Sigma F_{A_2}$.

\textit{Step 2: } By construction, the map $F_{\phi_{1,n-1}(A_2)} \rightarrow F_{P_{n-1}}$ is the map of homotopy fibres induced by the bottom square in the following homotopy fibration diagram \begin{equation}\label{inducedmaptofactor}\begin{tikzcd}
	{F_{\phi_{1,n-1}(A_2)}} & {F_{P_{n-1}}} \\
	{\phi_{1,n-1}(A_2)} & {P_{n-1}} \\
	{X_1} & {X_1}
	\arrow["{g_{n-1}}", from=2-2, to=3-2]
	\arrow[Rightarrow, no head, from=3-1, to=3-2]
	\arrow[from=2-1, to=3-1]
	\arrow[from=1-1, to=1-2]
	\arrow[from=1-2, to=2-2]
	\arrow["\alpha",hook, from=2-1, to=2-2]
	\arrow[from=1-1, to=2-1]
\end{tikzcd}\end{equation} We consider the case when $n$ is odd and $n$ is even seperately. 

Suppose $n$ is even. By Proposition ~\ref{pushoutdefined}, the map $g_{n-1}$ maps $\phi_{1,n-1}(A_2)$ to $A_2$ in $X_1$. Consider the following diagram \begin{equation}\label{eq:nevenbigdiag}\begin{tikzcd}
	{F_{\phi_{1,n-1}(A_2)}} & {F_{P_{n-1}}} & {*} & {*} & {F_{P_{n-1}}} \\
	{\phi_{1,n-1}(A_2)} & {P_{n-1}} & {X_1} & {X_{n-1}} & {P_{n-1}} \\
	{X_1} & {X_1} & {X_1} & {X_1} & {X_1.}
	\arrow[from=2-1, to=3-1]
	\arrow[Rightarrow, no head, from=3-1, to=3-2]
	\arrow["\alpha",hook, from=2-1, to=2-2]
	\arrow["{g_{n-1}}", from=2-2, to=3-2]
	\arrow[from=1-1, to=1-2]
	\arrow[from=1-1, to=2-1]
	\arrow[from=1-2, to=2-2]
	\arrow["{g_{n-1}}", from=2-2, to=2-3]
	\arrow[Rightarrow, no head, from=3-2, to=3-3]
	\arrow[Rightarrow, no head, from=2-3, to=3-3]
	\arrow[from=1-2, to=1-3]
	\arrow[from=1-3, to=2-3]
	\arrow["{\phi_{1,n-1}}", from=2-3, to=2-4]
	\arrow[Rightarrow, no head, from=3-3, to=3-4]
	\arrow[from=1-3, to=1-4]
	\arrow[from=1-4, to=2-4]
	\arrow["{g_{n-1}}", from=2-5, to=3-5]
	\arrow[Rightarrow, no head, from=3-4, to=3-5]
	\arrow[from=1-4, to=1-5]
	\arrow[from=1-5, to=2-5]
	\arrow[hook, from=2-4, to=2-5]
	\arrow["{\phi_{1,n-1}^{-1}}", from=2-4, to=3-4]
\end{tikzcd}\end{equation}

Considering the bottom squares going from left to right: the first square commutes by definition, the second and third squares clearly commute, and the fourth square commutes by definition of $g_{n-1}$. Then take homotopy fibres vertically to obtain (\ref{eq:nevenbigdiag}). The middle row is equal to $\alpha$ and the bottom row is the identity on $X_1$. Each of the top squares is a homotopy pullback, and so the sequence of four homotopy pullbacks in the top part of (\ref{eq:nevenbigdiag}) is a homotopy pullback. Therefore, the top row shows that the induced map of fibres in (\ref{inducedmaptofactor}) factors through a contractible space, and so the map between fibres is null homotopic.

Now suppose $n$ is odd. By Proposition ~\ref{pushoutdefined}, the map $g_{n-1}$ maps $\phi_{1,n-1}(A_2)$ to $A_1$ in $X_1$. Consider the following diagram \begin{equation}\label{eq:noddbigdiag}\begin{tikzcd}
	{F_{\phi_{1,n-1}(A_2)}} & {F_{P_{n-1}}} & {*} & {*} & {*} & {F_{P_{n-1}}} \\
	{\phi_{1,n-1}(A_2)} & {P_{n-1}} & {X_1} & {X_1} & {X_{n-1}} & {P_{n-1}} \\
	{X_1} & {X_1} & {X_1} & {X_1} & {X_1} & {X_1.}
	\arrow["{g_{n-1}}", from=2-6, to=3-6]
	\arrow[from=1-6, to=2-6]
	\arrow[hook, from=2-5, to=2-6]
	\arrow["{\phi^{-1}_{1,n-1}}", from=2-5, to=3-5]
	\arrow["{\psi^{-1}}", from=3-5, to=3-6]
	\arrow[Rightarrow, no head, from=3-4, to=3-5]
	\arrow[from=1-5, to=1-6]
	\arrow[from=1-5, to=2-5]
	\arrow[from=1-4, to=1-5]
	\arrow[from=1-4, to=2-4]
	\arrow[Rightarrow, no head, from=2-4, to=3-4]
	\arrow["{\phi_{1,n-1}}", from=2-4, to=2-5]
	\arrow["\psi", from=2-3, to=2-4]
	\arrow["\psi", from=3-3, to=3-4]
	\arrow[from=1-3, to=1-4]
	\arrow[from=1-3, to=2-3]
	\arrow[Rightarrow, no head, from=2-3, to=3-3]
	\arrow["{g_{n-1}}", from=2-2, to=2-3]
	\arrow["{g_{n-1}}", from=2-2, to=3-2]
	\arrow[Rightarrow, no head, from=3-2, to=3-3]
	\arrow[from=1-2, to=2-2]
	\arrow[from=1-2, to=1-3]
	\arrow[from=1-1, to=1-2]
	\arrow[from=1-1, to=2-1]
	\arrow["\alpha",hook, from=2-1, to=2-2]
	\arrow[Rightarrow, no head, from=3-1, to=3-2]
	\arrow[from=2-1, to=3-1]
\end{tikzcd}\end{equation}

Considering the bottom squares going from left to right: the first square commutes by definition, the second, third and fourth squares clearly commute, and the fifth square commutes by definition of $g_{n-1}$. Then take homotopy fibres vertically to obtain (\ref{eq:noddbigdiag}). The middle row is equal $\alpha$ and the bottom row is the identity on $X_1$. Considering the top squares: the first and second squares are homotopy pullbacks and the outer perimeter of the third to fifth squares is a homotopy pullback, since the composite $\psi^{-1} \circ \psi \simeq id_{X_1}$. Therefore, the sequence of five homotopy pullbacks in the top part of (\ref{eq:noddbigdiag}) is a homotopy pullback. Hence, the top row shows that the induced map of fibres in (\ref{inducedmaptofactor}) factors through a contractible space, and so the map between fibres is null homotopic.

\textit{Step 3: } By (\ref{eq:usethecube}), the composite $X_n \hookrightarrow P_n \xrightarrow{g_n} X_1$ is equal to the composite $X_n \xrightarrow{\phi_{1,n}^{-1}} X_1 \xrightarrow{\psi_n^{-1}} X_1$, which is a homeomorphism. Therefore $F_{X_n}$ is contractible and so the homotopy pushout in the top face of (\ref{cubeforpushoutfibres}) \[\begin{tikzcd}
	{F_{\phi_{1,n-1}(A_2)}} & {F_{X_n}} \\
	{F_{P_{n-1}}} & {F_{P_{n}}}
	\arrow[from=1-1, to=1-2]
	\arrow[from=1-2, to=2-2]
	\arrow[from=1-1, to=2-1]
	\arrow[from=2-1, to=2-2]
\end{tikzcd}\] is equivalent, up to homotopy, to the homotopy pushout \[\begin{tikzcd}
	{F_{\phi_{1,n-1}(A_2)}} & {*} \\
	{F_{P_{n-1}}} & {F_{P_{n}}.}
	\arrow[from=1-1, to=2-1]
	\arrow[from=1-1, to=1-2]
	\arrow[from=1-2, to=2-2]
	\arrow[from=2-1, to=2-2]
\end{tikzcd}\]
This implies that $F_{P_{n}}$ is the homotopy cofibre of the map $F_{\phi_{1,n-1}(A_2)} \rightarrow F_{P_{n-1}}$. However, this map is null homotopic and so $F_{P_n} \simeq F_{P_{n-1}} \vee \Sigma F_{\phi_{1,n-1}(A_2)}$. Observe that the composite $\phi_{1,n-1}(A_2) \hookrightarrow P_{n-1} \xrightarrow{g_{n-1}} X_1$ is the same, up to homeomorphism, as the inclusion $A_2 \hookrightarrow X_1$. Therefore, there is a homotopy equivalence $F_{\phi_{1,i}(A_2)} \simeq F_{A_2}$ for all $2 \leq i \leq n-1$. By induction, we obtain \[F_{P_n} \simeq \bigvee\limits_{i=1}^{n-1} \Sigma F_{A_2}.\]
\end{proof}
We now have everything we need to prove Theorem ~\ref{symDecomp}.

\begin{proof}[Proof of Theorem ~\ref{symDecomp}:]
By Lemma ~\ref{Fsplitting}, there is a homotopy equivalence \begin{equation}\label{initialdecomp}\Omega P_n \simeq \Omega X_1 \times \Omega F_{P_n}\end{equation} where $F_{P_n}$ is the homotopy fibre of $P_n \xrightarrow{g_{n}} X_1$. By Proposition ~\ref{fibredecomp}, there is a homotopy equivalence \[F_{P_n} \simeq \bigvee\limits_{i=1}^{n-1} \Sigma F_{A_2}.\] Substituting this into (\ref{initialdecomp}), we obtain $\Omega P_n \simeq \Omega X_1 \times \Omega\left(\bigvee\limits_{i=1}^{n-1} \Sigma F_{A_2}\right)$ as desired.
\end{proof}

\begin{remark}
    As pointed out by an anonymous referee, there is an alternative, slick proof of Theorem ~\ref{symDecomp} using \cite[p.180]{F}. However, we leave the proof as presented as it is more hands-on and fits in well with the later applications. 
\end{remark}

\begin{remark}
The case where $X_i = X$ for $1 \leq i \leq n$ and $A_1 = A_2$ has many applications. In this case, we can take $\psi:X_1 \rightarrow X_1$ and $\phi_{i,j}:X_i \rightarrow X_j$ to be the identity map on $X$.
\end{remark}

\begin{example}
Let $X$ be a topological space. We apply Theorem ~\ref{symDecomp} to the wedge $\bigvee_{i=1}^n X$ to obtain a version of the Hilton-Milnor theorem. The wedge sum is formed by gluing the copies of $X$ together by the corresponding basepoints. Let $X_i = X$ for $1 \leq i \leq n$ and $A_1 = A_2 =*$. This means that the homotopy fibre $F_{A_2}$ in the statement of Theorem ~\ref{symDecomp} is the homotopy fibre of the inclusion $* \hookrightarrow X$. Hence $F_{A_2} \simeq \Omega X$. Applying Theorem ~\ref{symDecomp}, we obtain \[\Omega\left(\bigvee\limits_{i=1}^n X\right) \simeq \Omega X \times \Omega\left(\bigvee\limits_{i=1}^{n-1} \Sigma \Omega X\right).\] Denote by $(\Sigma \Omega)^k$ the operation $\overbrace{(\Sigma \Omega) \cdots (\Sigma \Omega)}^{k\text{ times}}$ with the convention that when $k=0$, the operator is the identity. By induction, we can further decompose $\Omega\left(\bigvee_{i=1}^{n-1} \Sigma \Omega X\right)$ to obtain \[\Omega\left(\bigvee\limits_{i=1}^n X\right) \simeq \prod\limits_{i=0}^{n-1} \Omega(\Sigma\Omega)^i X.\]
\end{example}

\begin{example}
We can also apply Theorem ~\ref{symDecomp} to the connected sum $X \conn X$, when $X$ is a closed $n$-dimensional manifold. Denote by $X\setminus D^n$ the manifold $X$ with an open disk removed. Let $S^{n-1}$ be the boundary circle of $D^n$ in $X$. We can think of $X \conn X$ as two copies of $X \setminus D^n$ glued together over the boundary circles $S^{n-1}$. Let $X_i = X\setminus D^n$ for $1 \leq i \leq 2$ and $A_1 = A_2 =S^{n-1}$. Let $G$ be the homotopy fibre of the inclusion $S^{n-1} \hookrightarrow X \setminus D^n$. Then applying Theorem ~\ref{symDecomp}, we obtain \[\Omega (X \conn X) \simeq \Omega (X\setminus D^n) \times \Omega\Sigma G.\] The manifold $X \conn X$ is a special case of the double of a manifold with boundary. Loop space decompositions of the double of a manifold with boundary were considered in \cite[Section 2]{HT2}, and this recovers a special case of \cite[Lemma 2.1]{HT2}.

\end{example}

\section{Properties of Polyhedral Products}
\label{sec:basicpropPP}
In this section, we introduce the basic properties of polyhedral products needed to apply Theorem ~\ref{symDecomp} to them. First, we will require the notion of two simplicial complexes being \textit{isomorphic}.
\begin{definition}
\label{simpiso}
Let $K_1$ and $K_2$ be simplicial complexes. A \textit{simplicial isomorphism} is a bijective map $f:K_1 \rightarrow K_2$ such that if $\sigma$ is a face of $K_1$, then $f(\sigma)$ is a face of $K_2$ for all $\sigma \in K_1$.
\end{definition} 

Two simplicial complexes are called \textit{isomorphic} if there exists a simplicial isomorphism between $K_1$ and $K_2$. This definition means that two simplicial complexes are isomorphic if there exists a relabelling of $K_2$ which makes it identical to $K_1$.

Let $\mathcal{K}$ be the category of simplicial complexes with morphisms which are simplicial maps and $CW_*$ be the category of connected, pointed $CW$-complexes and pointed continuous maps. The polyhedral product is a functor from $\mathcal{K}$ to $CW_*$ \cite{BBCG}. From this, we obtain the following results.

\begin{theorem}
\label{inducedinclusion}
Let $K$ and $L$ be simplicial complexes on the vertex sets $[m]$ and $[n]$ respectively. Suppose there is a simplicial inclusion $K\xrightarrow{f} L$. Then for any sequence $\uxa$ of pointed, path connected $CW$-complexes, there is an induced map $\uxa^K \rightarrow \uxa^L$.
\qedno
\end{theorem}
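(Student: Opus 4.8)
The plan is to exhibit the induced map directly, since everything reduces to monotonicity of the polyhedral product in its simplicial complex variable once the vertex sets have been matched up. As $f$ is a simplicial inclusion it is injective on vertices, so after relabelling (Definition~\ref{simpiso}) I may assume $[m] \subseteq [n]$ with $f$ the inclusion of vertex sets, and, with the conventions in force, the pairs are indexed by $[n]$ while $\uxa^{K}$ is built from their restriction to $[m]$ (when all pairs coincide, as in the applications, this is automatic).

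I would then write down the map. The basepoints $\ast \in A_{j} \subseteq X_{j}$ give a pointed map $\iota \colon \prod_{i \in [m]} X_{i} \rightarrow \prod_{j \in [n]} X_{j}$ whose $j$-th coordinate is $x_{j}$ for $j \in [m]$ and $\ast$ for $j \in [n]\setminus[m]$. It remains to check that $\iota$ sends $\uxa^{K}$ into $\uxa^{L}$. A point of $\uxa^{K}$ lies in $\uxa^{\sigma}$ for some $\sigma \in K$; unwinding the definition, its image under $\iota$ has $j$-th coordinate in $X_{j}$ for $j \in \sigma$ and in $A_{j}$ for $j \notin \sigma$ (for $j \in [m]\setminus\sigma$ because $x_{j} \in A_{j}$, and for $j \notin [m]$ because of the basepoint), so it lies in the copy of $\uxa^{\sigma}$ inside $\prod_{j\in[n]}X_{j}$. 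Since $f$ is a simplicial inclusion, $\sigma = f(\sigma) \in L$, so $\uxa^{\sigma} \subseteq \uxa^{L}$; taking the union over $\sigma \in K$ gives $\iota(\uxa^{K}) \subseteq \uxa^{L}$, and the restriction of $\iota$ is the required map. This also follows at once from the functoriality of the polyhedral product recorded above \cite{BBCG}, but the explicit description is convenient for the later sections.

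I do not expect a genuine obstacle: the entire content is the vertex-set bookkeeping together with the elementary observations that each $A_{j} \hookrightarrow X_{j}$ lets the extra coordinates be filled in by basepoints, and that, by the very definition of a simplicial inclusion, a face of $K$ maps to a face of $L$ — which is exactly what is needed to land $\uxa^{\sigma}$ in the correct summand of the union defining $\uxa^{L}$.
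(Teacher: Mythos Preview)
Your proposal is correct. The paper does not actually give a proof of this statement: it is stated with a \qedno and is presented as an immediate consequence of the functoriality of the polyhedral product from $\mathcal{K}$ to $CW_*$ established in \cite{BBCG}. Your explicit description of the induced map via basepoint insertion on the extra coordinates, together with the check that faces of $K$ map to faces of $L$, is precisely what that functoriality unpacks to in this special case, so your argument is in line with (and a bit more detailed than) what the paper invokes.
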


\begin{theorem}
\label{isoPP}
Let $K_1$ and $K_2$ be isomorphic simplicial complexes on the vertex set $[m]$. Let $(X,A)$ be a pointed $CW$-pair. Then there exists a homeomorphism $(X,A)^{K_1} \cong (X,A)^{K_2}$.
\qedno
\end{theorem}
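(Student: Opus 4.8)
The plan is to realise the asserted homeomorphism by permuting the coordinates of the ambient product $\prod_{i=1}^m X$. By the discussion following Definition~\ref{simpiso}, an isomorphism $K_1\cong K_2$ of simplicial complexes on the common vertex set $[m]$ is realised by a permutation $\tau$ of $[m]$, in the sense that $\sigma\in K_1$ if and only if $\tau(\sigma)\in K_2$ for every $\sigma\subseteq[m]$; in particular $\tau$ restricts to a bijection from the face set of $K_1$ to the face set of $K_2$. Because $(X,A)^{K_j}$ is assembled from the single pair $(X,A)$ in each of the $m$ coordinates, permuting coordinates by $\tau$ should carry $(X,A)^{K_1}$ onto $(X,A)^{K_2}$.

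Concretely, I would define $\Phi\colon\prod_{i=1}^m X\to\prod_{i=1}^m X$ by declaring the $j$-th coordinate of $\Phi(x_1,\dots,x_m)$ to be $x_{\tau^{-1}(j)}$. This is a homeomorphism, with inverse the permutation of coordinates by $\tau$, and it is pointed because every coordinate carries the same basepoint. The key step is to check that $\Phi$ maps the subproduct $(X,A)^\sigma$ onto $(X,A)^{\tau(\sigma)}$ for each $\sigma\subseteq[m]$: a point lies in $(X,A)^\sigma$ exactly when its $i$-th coordinate lies in $A$ for all $i\notin\sigma$, and the $j$-th coordinate of its image under $\Phi$ then lies in $A$ precisely when $\tau^{-1}(j)\notin\sigma$, that is, when $j\notin\tau(\sigma)$; applying the same reasoning to $\Phi^{-1}$ gives the reverse containment. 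Taking unions over faces then yields
\[
\Phi\bigl((X,A)^{K_1}\bigr)=\bigcup_{\sigma\in K_1}\Phi\bigl((X,A)^\sigma\bigr)=\bigcup_{\sigma\in K_1}(X,A)^{\tau(\sigma)}=\bigcup_{\rho\in K_2}(X,A)^{\rho}=(X,A)^{K_2},
\]
since $\tau$ gives a bijection between the faces of $K_1$ and those of $K_2$. As $\Phi$ is a homeomorphism of the ambient product carrying one subspace onto the other, its restriction is the required homeomorphism $(X,A)^{K_1}\cong(X,A)^{K_2}$.

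There is no genuine difficulty here; the only point demanding care is the bookkeeping with $\tau$ versus $\tau^{-1}$ in the definition of $\Phi$, together with the verification that $\Phi$ respects the pattern of $X$- and $A$-factors coordinate by coordinate. An alternative, even shorter, argument uses the functoriality of the polyhedral product \cite{BBCG}: the simplicial isomorphism $f$ and its inverse $f^{-1}$ are both simplicial maps, so they induce continuous maps $(X,A)^{K_1}\to(X,A)^{K_2}$ and $(X,A)^{K_2}\to(X,A)^{K_1}$ whose composites are induced by identity simplicial maps and are therefore the respective identities. I would nevertheless present the explicit coordinate-permutation version, since it exhibits the homeomorphism transparently and is the form in which the result is used later.
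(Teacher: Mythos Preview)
Your proposal is correct. The paper does not give a proof at all: it simply records the theorem as an immediate consequence of the functoriality of the polyhedral product \cite{BBCG} and marks it with \qedno. Your alternative functoriality argument is therefore exactly the paper's approach, while your explicit coordinate-permutation argument is a more detailed unpacking of that same functorial statement; both are fine, and the explicit version has the virtue of making the homeomorphism concrete.
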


There is a second type of map we can define between polyhedral products. This involves a special subcomplex known as a full subcomplex.

\begin{definition}
Let $K$ be a simplicial complex on the vertex set $[m]$. If $I \subseteq [m]$, then the \textit{full subcomplex} $K_I$ of $K$ is defined as the simplicial complex \[K_I = \bigcup\{\sigma \in K \: | \: \text{the vertex set of } \sigma \text{ is a subset of } I\}.\]
\end{definition}

In the special case where $K$ is a graph, $K_I$ is known as the \textit{induced subgraph} on $I$. Since $K_I$ is a subcomplex of the simplicial complex $K$, there is an inclusion $\uxa^{K_I} \rightarrow \uxa^K$ of polyhedral products. Working the other way, projecting from $[m]$ to $I$ does not induce a map of simplicial complexes $K \rightarrow K_I$. However, there is a projection on the level of polyhedral products. If $I = \{i_1,\cdots,i_k\}$ for $1 \leq i_1 < \cdots <i_k \leq m$, let $X^I = \prod_{j=1}^k X_{i_j}$. The following result from \cite{DS} shows that $\uxa^{K_I}$ retracts off $\uxa^K$.

\begin{proposition}
\label{fullretract}
Let $K$ be a simplicial complex on the vertex set $[m]$ and let $\uxa$ be any sequence of pointed, path-connected CW-pairs. Let $I \subseteq [m]$. Then the projection $\prod_{i=1}^m X_i \rightarrow X^I$ induces a map $\uxa^K \rightarrow \uxa^{K_I}$. Further, the composite $\uxa^{K_I} \hookrightarrow \uxa^K \rightarrow \uxa^{K_I}$ is the identity map.
\qedno
\end{proposition}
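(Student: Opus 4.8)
The plan is to prove Proposition~\ref{fullretract} directly from the functoriality of the polyhedral product construction together with the combinatorial compatibility of full subcomplexes with coordinate projections. The key observation is that although there is no simplicial map $K \to K_I$ realising the projection, the product projection $\pi\colon \prod_{i=1}^m X_i \to X^I$ does restrict to a well-defined map on the polyhedral subspaces.

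First I would set up notation: write $I = \{i_1 < \cdots < i_k\}$, let $\pi\colon \prod_{i=1}^m X_i \to X^I = \prod_{j=1}^k X_{i_j}$ be the coordinate projection, and recall $\uxa^K = \bigcup_{\sigma \in K} \uxa^\sigma$. The core step is to show $\pi(\uxa^K) \subseteq \uxa^{K_I}$. For this, fix $\sigma \in K$ and consider $\uxa^\sigma = \prod_{i=1}^m Y_i$ with $Y_i = X_i$ for $i \in \sigma$ and $Y_i = A_i$ otherwise. Applying $\pi$ simply discards the coordinates not in $I$, so $\pi(\uxa^\sigma) = \prod_{j=1}^k Y_{i_j}$, which is exactly $\uxa_I^{\sigma \cap I}$ where $\sigma \cap I$ is regarded as a face on the vertex set $I$. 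The point is that $\sigma \cap I$ is a face of $K_I$: since $\sigma \in K$ and $\sigma \cap I$ has vertex set contained in $I$, by definition $\sigma \cap I \in K_I$. Hence $\pi(\uxa^\sigma) \subseteq \uxa^{K_I}$, and taking the union over all $\sigma \in K$ gives $\pi(\uxa^K) \subseteq \uxa^{K_I}$. Thus $\pi$ restricts to a continuous map $\uxa^K \to \uxa^{K_I}$, which is pointed since $\pi$ preserves basepoints.

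For the retraction statement, I would compose with the inclusion. The inclusion $\iota\colon \uxa^{K_I} \hookrightarrow \uxa^K$ is induced by $\prod_{j=1}^k X_{i_j} \hookrightarrow \prod_{i=1}^m X_i$, the map that fills in the coordinates outside $I$ with basepoints $a_i \in A_i$ (equivalently, it is the map of polyhedral products associated to viewing $K_I$ as a subcomplex of $K$). The composite $\pi \circ \iota$ at the level of the ambient products is the identity on $\prod_{j=1}^k X_{i_j}$, because inserting basepoint coordinates and then projecting them away returns the original point. Restricting to the polyhedral product, $\uxa^{K_I} \hookrightarrow \uxa^K \to \uxa^{K_I}$ is therefore the identity map.

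The only mild subtlety — and the step I would be most careful about — is verifying that the map $\uxa^K \to \uxa^{K_I}$ really does land in the correct subspace and agrees with the combinatorial picture, i.e.\ that $\sigma \cap I$ (possibly empty, corresponding to the product of all $A_{i_j}$) is always a face of $K_I$; this is immediate from the definition of $K_I$ but should be stated since $\sigma \cap I$ can differ from any face of $K$ when $\sigma \not\subseteq I$. Everything else is formal: continuity and basepoint-preservation are inherited from the coordinate projection and inclusion of products, and the retraction identity holds already on the ambient products. No homotopy theory is needed here; this is a strictly point-set argument, and the result is recorded so it can be invoked later when splitting off full-subcomplex polyhedral products after looping.
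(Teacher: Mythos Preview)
Your argument is correct and is exactly the standard point-set verification of this fact. Note, however, that the paper does not actually prove this proposition: it is quoted from \cite{DS} and stated with a \qedno, so there is no proof in the paper to compare against. Your write-up supplies the omitted details, and the one step worth making slightly more explicit is that $\sigma\cap I\in K$ because $K$ is closed under taking faces, hence $\sigma\cap I\in K_I$; otherwise everything is in order.
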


The next property is the relation between pushouts of simplicial complexes and pushouts of polyhedral products. Let $K$ be a simplicial complex on the vertex set $[m]$. Suppose there is a pushout of simplicial complexes \[\begin{tikzcd}
	L & {K_1} \\
	{K_2} & K.
	\arrow[from=1-1, to=1-2]
	\arrow[from=1-2, to=2-2]
	\arrow[from=1-1, to=2-1]
	\arrow[from=2-1, to=2-2]
\end{tikzcd}\] To compare the polyhedral products for $L$, $K_1$, $K_2$ and $K$, we should consider them all over the same vertex set $[m]$. We can do this by introducing ghost vertices. Considering $L$, $K_1$ and $K_2$ as simplcial complexes over $[m]$, a \textit{ghost vertex} is a one element subset of $[m]$ that is not in the given simplicial complex. Denote the simplicial complexes over the vertex set $[m]$ by $\bar{L}$, $\bar{K}_1$, $\bar{K}_2$. In this case, $K = \bar{K}_1 \cup_{\bar{L}} \bar{K}_2$. The following result is from \cite{GT2}.

\begin{proposition}
\label{inducepushout}
Let $K$ be a simplicial complex on the vertex set $[m]$. Suppose there is a pushout of simplicial complexes \[\begin{tikzcd}
	L & {K_1} \\
	{K_2} & K.
	\arrow[from=1-1, to=1-2]
	\arrow[from=1-2, to=2-2]
	\arrow[from=1-1, to=2-1]
	\arrow[from=2-1, to=2-2]
\end{tikzcd}\] Then there is a pushout of polyhedral products \[\begin{tikzcd}
	{(\underline{X},\underline{A})^{\bar{L}}} & {(\underline{X},\underline{A})^{\bar{K}_1}} \\
	{(\underline{X},\underline{A})^{\bar{K}_2}} & {(\underline{X},\underline{A})^K}.
	\arrow[from=1-1, to=1-2]
	\arrow[from=1-1, to=2-1]
	\arrow[from=2-1, to=2-2]
	\arrow[from=1-2, to=2-2]
\end{tikzcd}\]
\qedno
\end{proposition}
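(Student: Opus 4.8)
The plan is to realise all four polyhedral products as subspaces of the common ambient product $\prod_{i=1}^{m} X_i$ and then observe that the proposed square is nothing but the square expressing a CW-complex as the union of two CW-subcomplexes along their intersection, which is automatically a pushout.

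First I would unwind the ghost-vertex bookkeeping. Realised over $[m]$, the complexes $\bar{L}$, $\bar{K}_1$, $\bar{K}_2$ are subcomplexes of the full simplex $\Delta^{m-1}$, and the hypothesis that the square of simplicial complexes is a pushout means, via this realisation (as recorded in the discussion preceding the statement), that $\bar{K} = \bar{K}_1 \cup \bar{K}_2$ and $\bar{K}_1 \cap \bar{K}_2 = \bar{L}$ inside $\Delta^{m-1}$. Applying Theorem~\ref{inducedinclusion} to the simplicial inclusions $\bar{L} \hookrightarrow \bar{K}_j \hookrightarrow \bar{K}$ identifies the maps in the proposed square with the corresponding subspace inclusions of polyhedral products inside $\prod_{i=1}^{m} X_i$; in particular the square strictly commutes.

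Next I would establish the key identity $\uxa^{\sigma} \cap \uxa^{\tau} = \uxa^{\sigma \cap \tau}$ for arbitrary subsets $\sigma, \tau \subseteq [m]$, viewed as subspaces of $\prod_{i=1}^{m} X_i$. This is a coordinatewise check: the $i$-th factor of the left-hand side is $X_i \cap X_i = X_i$ when $i \in \sigma \cap \tau$, and equals $A_i$ in each of the three remaining cases (where at least one of $\sigma,\tau$ omits $i$), using only $A_i \subseteq X_i$. Taking unions over faces and using that $\bar{K}_1$ and $\bar{K}_2$ are downward closed then gives $\uxa^{\bar{K}_1} \cup \uxa^{\bar{K}_2} = \uxa^{\bar{K}}$, and $\uxa^{\bar{K}_1} \cap \uxa^{\bar{K}_2} = \bigcup_{\sigma \in \bar{K}_1,\, \tau \in \bar{K}_2} \uxa^{\sigma \cap \tau} = \uxa^{\bar{L}}$, since the sets $\sigma \cap \tau$ run over exactly the faces of $\bar{K}_1 \cap \bar{K}_2 = \bar{L}$ (each arising by taking $\sigma = \tau$).

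Finally I would conclude by the standard point-set argument: each $\uxa^{\bar{K}_j}$ is a CW-subcomplex of $\uxa^{\bar{K}}$ (the latter carrying the CW-structure it inherits as a subcomplex of $\prod_{i=1}^{m} X_i$), their union is $\uxa^{\bar{K}}$, and their intersection is $\uxa^{\bar{L}}$, so the square is a pushout of spaces, and in fact a homotopy pushout since inclusions of CW-subcomplexes are cofibrations. The main obstacle I anticipate is exactly this last step: one must make sure the ambient CW-structures are genuinely compatible, so that ``union of two subcomplexes along their intersection'' computes the categorical pushout and not merely a set-theoretic decomposition — together with the earlier verification that the functorially induced maps coincide with the evident subspace inclusions.
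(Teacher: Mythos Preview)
The paper does not actually prove this proposition: it is quoted from \cite{GT2} and stated with a \qedno. Your argument is correct and is precisely the standard one underlying that reference --- realise all four polyhedral products as subcomplexes of $\prod_{i=1}^m X_i$, verify the identities $\uxa^{\bar K_1}\cup\uxa^{\bar K_2}=\uxa^{K}$ and $\uxa^{\bar K_1}\cap\uxa^{\bar K_2}=\uxa^{\bar L}$ via the coordinatewise computation $\uxa^{\sigma}\cap\uxa^{\tau}=\uxa^{\sigma\cap\tau}$, and conclude that a cover of a CW-complex by two closed subcomplexes is a (homotopy) pushout over their intersection.

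One small remark on your final worry: since each $\uxa^{\sigma}$ is a product of CW-subcomplexes $A_i\subseteq X_i$ and hence a genuine subcomplex of $\prod_i X_i$, both $\uxa^{\bar K_1}$ and $\uxa^{\bar K_2}$ are closed CW-subcomplexes of $\uxa^{K}$; the gluing lemma for closed covers then gives the categorical pushout directly, and the inclusion of a subcomplex being a cofibration upgrades it to a homotopy pushout. So the obstacle you flagged is not a genuine one here.
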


In \cite{DS}, the following theorem was proved which allows us to determine the homotopy type of $\Omega(\underline{X},\underline{*})^K$ from $\Omega(\underline{C\Omega X},\underline{\Omega X})^{K}$.

\begin{theorem}
\label{coneloop}
Let $K$ be a simplicial complex on the vertex set $[m]$ and let $\{(\underline{X},\underline{*})\}_{i=1}^m$ be a sequence of pointed pairs $(X_i,\ast)$ where each $X_i$ is path-connected. Then there is a homotopy fibration \[\clxx^K \rightarrow (\underline{X},\underline{*})^K \rightarrow \prod\limits_{i=1}^m X_i.\]

Further, this fibration splits after looping to give a homotopy equivalence \[\Omega (\underline{X},\underline{*})^K \simeq \left(\prod\limits_{i=1}^m \Omega X_i \right) \times \Omega \clxx^K.\]
\qedno
\end{theorem}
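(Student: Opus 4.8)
The plan is to identify the homotopy fibre of the map $(\underline{X},\underline{*})^K \to \prod_{i=1}^m X_i$ with $\clxx^K$ by induction on the number of faces of $K$. First note that this map is just the subspace inclusion: $\prod_{i=1}^m X_i = (\underline{X},\underline{*})^{\Delta^{m-1}}$ for the full simplex $\Delta^{m-1}$ on $[m]$, and $K \subseteq \Delta^{m-1}$ as simplicial complexes, so Theorem~\ref{inducedinclusion} supplies $(\underline{X},\underline{*})^K \hookrightarrow \prod_{i=1}^m X_i$. The induction cuts $K$ into smaller complexes with Proposition~\ref{inducepushout} and transports the resulting decomposition of $(\underline{X},\underline{*})^K$ down to the homotopy fibres using Mather's cube lemma, Theorem~\ref{cubelemma}. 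The looped splitting is then obtained separately by producing a section after looping.

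For the base case, let $K = 2^{\tau}$ be a simplex on a subset $\tau \subseteq [m]$, with the remaining vertices appearing as ghost vertices. Then $(\underline{X},\underline{*})^{K} \cong \prod_{i \in \tau} X_i$, embedded in $\prod_{i=1}^m X_i$ as the subproduct with the coordinates indexed by $[m]\setminus\tau$ at the basepoint, so the homotopy fibre of the inclusion is $\prod_{i \notin \tau} \Omega X_i$; on the other hand $\clxx^{K} \cong \prod_{i \in \tau} C\Omega X_i \times \prod_{i \notin \tau} \Omega X_i \simeq \prod_{i \notin \tau} \Omega X_i$ since each cone $C\Omega X_i$ is contractible. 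These agree (the extreme cases being $K = \Delta^{m-1}$, where both sides are contractible, and $K = \{\emptyset\}$, where both sides are $\prod_{i=1}^m \Omega X_i$). For the inductive step, if $K$ is not a simplex, choose a maximal face $\sigma$ and set $K' = K \setminus \{\sigma\}$; then $K = K' \cup_{\partial\sigma} 2^{\sigma}$ is a pushout of simplicial complexes on $[m]$ (where $\partial\sigma = 2^\sigma \setminus \{\sigma\}$), and $K'$, $\partial\sigma$, $2^\sigma$ each have strictly fewer faces than $K$. Applying Proposition~\ref{inducepushout} to the sequence of pairs $(X_i,\ast)$ and also to the sequence $(C\Omega X_i, \Omega X_i)$ produces pushout squares computing $(\underline{X},\underline{*})^K$ and $\clxx^K$; the maps being pushed out are inclusions of subcomplexes, hence cofibrations, so both squares are homotopy pushouts.

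Now take homotopy fibres of the four maps $(\underline{X},\underline{*})^{\bar{L}},\,(\underline{X},\underline{*})^{\bar{K}_1},\,(\underline{X},\underline{*})^{\bar{K}_2},\,(\underline{X},\underline{*})^K \to \prod_{i=1}^m X_i$ and assemble them into a cube whose bottom face is the pushout square for $(\underline{X},\underline{*})^{(-)}$, whose top face is the square of homotopy fibres, and whose vertical edges are the fibre inclusions. Each of the four side faces of the cube is the square of total spaces and homotopy fibres for a map of homotopy fibrations over the common base $\prod_{i=1}^m X_i$, hence is a homotopy pullback; since the bottom face is a homotopy pushout, Theorem~\ref{cubelemma} shows the top face is a homotopy pushout. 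By the inductive hypothesis the three outer fibres are $\clxx^{\bar{L}}$, $\clxx^{\bar{K}_1}$, $\clxx^{\bar{K}_2}$, and the square then identifies $F_K$, the homotopy fibre of $(\underline{X},\underline{*})^K \hookrightarrow \prod_{i=1}^m X_i$, with the homotopy pushout of $\clxx^{\bar{K}_1} \leftarrow \clxx^{\bar{L}} \rightarrow \clxx^{\bar{K}_2}$, which is $\clxx^K$ by Proposition~\ref{inducepushout}. The step I expect to be the main obstacle is the bookkeeping that makes this go through cleanly: one must carry along in the inductive statement a homotopy equivalence $\clxx^K \simeq F_K$ that is natural with respect to simplicial inclusions on $[m]$, and check that the equivalence produced by the cube lemma at stage $K$ extends the ones at the smaller stages. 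This is routine but fiddly; it can be packaged using the naturality of the homotopy pushout in Mather's lemma.

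Finally, for the looped splitting, each vertex $\{i\}$ of $K$ gives an inclusion $j_i \colon X_i = (\underline{X},\underline{*})^{\{i\}} \hookrightarrow (\underline{X},\underline{*})^K$ whose composite with the inclusion into $\prod_{i=1}^m X_i$ is the $i$-th coordinate inclusion. Looping and multiplying in the homotopy-associative $H$-space $\Omega(\underline{X},\underline{*})^K$ gives a map $s \colon \prod_{i=1}^m \Omega X_i \to \Omega(\underline{X},\underline{*})^K$; composing $s$ with $\Omega$ of the inclusion $(\underline{X},\underline{*})^K \hookrightarrow \prod_{i=1}^m X_i$, which is an $H$-map because it is a loop map, recovers the identity of $\prod_{i=1}^m \Omega X_i$, since in the product the image of each looped vertex inclusion meets only its own coordinate. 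Thus the looped fibration has a section, and the standard comparison-of-fibrations argument (as in the proof of Lemma~\ref{Fsplitting}) yields $\Omega(\underline{X},\underline{*})^K \simeq \bigl(\prod_{i=1}^m \Omega X_i\bigr) \times \Omega\clxx^K$.
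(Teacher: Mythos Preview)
The paper does not prove this theorem: it is quoted from \cite{DS} and marked with \qedno. So there is no in-paper argument to compare against, and your task was really to supply a proof from scratch.

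Your argument is essentially sound and is in fact the standard way one proves this result. One remark on the point you yourself flag as ``the main obstacle'': the naturality bookkeeping becomes trivial, rather than ``routine but fiddly'', if you first write down a global comparison map. The evaluation map $C\Omega X_i\to X_i$, $(\gamma,t)\mapsto\gamma(t)$, is a map of pairs $(C\Omega X_i,\Omega X_i)\to(X_i,\ast)$, and by functoriality of the polyhedral product it induces a map $\clxx^{K}\to(\underline{X},\underline{\ast})^{K}$ that is natural in $K$. Its composite with the inclusion into $\prod_i X_i$ factors through the contractible space $\prod_i C\Omega X_i$, hence is canonically null, giving a natural map $\clxx^{K}\to F_K$ into the homotopy fibre. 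With this map in hand, your induction on the number of faces via the cube lemma becomes a straightforward check that a given natural transformation is an equivalence at each stage; you no longer need to manufacture compatible equivalences by hand or worry about whether the maps produced by Mather's lemma match the simplicial-inclusion maps. Your splitting argument for the looped fibration is correct as written.
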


The following result will be required in Section ~\ref{sec:bookgraphex} to apply Lemma ~\ref{naturalhtpyequiv}. This is an adaptation of a result in \cite{GT2}, but a different proof will be provided as we will require an explicit homotopy for our application.

\begin{lemma}
\label{conenullhtpy}
Let $\{v_1,\cdots,v_n\}$ be a set of disjoint points. Then the inclusion \[\prod_{i=1}^{n-1} X_i \times CX_n \xhookrightarrow{k} (\underline{CX},\underline{X})^{\{v_1,\cdots,v_n\}}\] is null homotopic. Moreover, the homotopy can be chosen such that, up to reparameterisation, its restriction to $X_i$ for $1 \leq i \leq n-1$ is $X_i \times I \xrightarrow{H} CX_i$ where $H$ is the standard quotient map sending $X_i \vee I$ to a point.
\end{lemma}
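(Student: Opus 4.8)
The plan is to exhibit an explicit null homotopy of $k$ by contracting each cone coordinate $CX_i$ to its cone point in turn, while keeping careful track of the homotopy on the last factor $CX_n$. Recall that $(\underline{CX},\underline{X})^{\{v_1,\dots,v_n\}}$ is, by definition of the polyhedral product over $n$ disjoint points, the subspace of $\prod_{i=1}^n CX_i$ consisting of points $(z_1,\dots,z_n)$ with at least $n-1$ of the coordinates lying in $X_i \subseteq CX_i$; equivalently it is the ``fat wedge'' $\bigcup_{j=1}^n X_1 \times \cdots \times CX_j \times \cdots \times X_n$. The map $k$ is the inclusion of the single summand $X_1 \times \cdots \times X_{n-1} \times CX_n$.

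First I would recall the standard contracting homotopy $c: CX \times I \to CX$ of a cone onto its cone point, which may be taken so that on $X \subseteq CX$ it is exactly the quotient map $X \times I \to CX$ collapsing $X \times \{1\} \cup \{*\} \times I$; write $H: X \times I \to CX$ for this quotient map, matching the notation in the statement. Then I would define the null homotopy $\mathcal{H}: \left(\prod_{i=1}^{n-1}X_i \times CX_n\right) \times I \to (\underline{CX},\underline{X})^{\{v_1,\dots,v_n\}}$ in stages: on the sub-interval $[\tfrac{j-1}{n-1}, \tfrac{j}{n-1}]$ (for $j = 1, \dots, n-1$) we contract the $(n-j)$-th cone coordinate, i.e.\ push $x_{n-j} \in X_{n-j}$ along $H$ into $CX_{n-j}$ and back to the cone point; once $x_{n-j}$ has been moved off of $X_{n-j}$ and then returned, repeat with the next index. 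The key point making this land in the polyhedral product is that at every moment at least $n-1$ of the $n$ coordinates lie in the respective $X_i$: initially coordinates $1,\dots,n-1$ are in $X_i$ and the $n$-th is in $CX_n$; during stage $j$ only coordinate $n-j$ is allowed to leave its $X$, and it does so only after all of $n-j+1,\dots,n-1$ have already been collapsed to cone points (which lie in... no — cone points are not in $X_i$), so I must instead order the contractions so that only one coordinate is ever ``active'' (off its $X_i$) at a time, returning it to the cone point before activating the next. A cleaner bookkeeping: at stage $j$, coordinates $1,\dots,n-j-1$ are still in their $X_i$, coordinates $n-j+1,\dots,n$ have been collapsed to cone points, and coordinate $n-j$ is the one moving — that is $n-1$ coordinates fixed at points of $CX_i$ but we need $n-1$ of them in $X_i$; so in fact the correct scheme is to collapse $CX_n$ first (stage $1$), which gives a point with coordinates $1,\dots,n-1$ in $X_i$ and coordinate $n$ at the cone point — still $n-1$ in $X_i$, valid — then collapse $X_{n-1}$ via $H$ (stage $2$): while it moves, coordinates $1,\dots,n-2$ are in $X_i$ and coordinate $n$ is at the cone point, giving $n-1$ coordinates in $X_i \cup \{*\}$; but $* \notin X_i$.

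Resolving this is the main obstacle, and the fix is to do the contractions in the order $CX_n, X_{n-1}, X_{n-2}, \dots, X_1$ but to observe that the cone point $*$ \emph{does} lie in $X_i$ precisely because each $X_i$ is pointed and the cone $CX_i = X_i \times I / (X_i \times \{1\} \cup \{*\} \times I)$ contains the image of $X_i \times \{0\}$, and the cone point is identified with the image of $(*,0)$, hence lies in the copy of $X_i$ sitting at the base of the cone. Thus after collapsing a coordinate to its cone point, that coordinate lies in $X_i \subseteq CX_i$, so the count of coordinates in $X_i$ never drops below $n-1$, and at the final time $t=1$ all coordinates are at cone points, i.e.\ $\mathcal{H}_1$ is the constant map at the basepoint. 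Therefore $\mathcal{H}$ is a well-defined null homotopy of $k$. Finally, by construction $\mathcal{H}$ restricted to the factor $X_i$ (for $1 \le i \le n-1$), with the other coordinates held at their basepoints, is supported on the single sub-interval where coordinate $i$ is active, and there it is exactly the quotient map $H: X_i \times I \to CX_i$ up to the affine reparameterisation $[\tfrac{n-1-i}{n-1},\tfrac{n-i}{n-1}] \cong I$; this is precisely the ``moreover'' clause. I would close by noting this is the form needed to verify the hypotheses of Lemma~\ref{naturalhtpyequiv} in Section~\ref{sec:bookgraphex}.
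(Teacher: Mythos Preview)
Your approach is essentially the same as the paper's: first contract $CX_n$ to the cone point, then push each remaining coordinate $x_i$ into $CX_i$ via the standard quotient map $H$, one at a time, using that the cone point is the basepoint of $X_i$ so that at every moment at most one coordinate lies outside its base $X_i$. The paper contracts in the order $CX_n, X_1, X_2, \ldots, X_{n-1}$ on $n$ equal sub-intervals $[\tfrac{i}{n},\tfrac{i+1}{n}]$, whereas you contract in the order $CX_n, X_{n-1}, \ldots, X_1$; the order is immaterial, but note you need $n$ stages, not $n-1$, so your interval partition should be $[\tfrac{j-1}{n},\tfrac{j}{n}]$ rather than $[\tfrac{j-1}{n-1},\tfrac{j}{n-1}]$.
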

\begin{proof}
By definition, $(\underline{CX},\underline{X})^{\{v_1,\cdots,v_n\}} = \bigcup_{i=1}^n X_1 \times \cdots \times CX_i \times \cdots \times X_n$. For $1 \leq i \leq n-1$, let $H_i:X_i \times I \rightarrow CX_i$ be the standard quotient map sending $X_i \vee I$ to a point and $H_n: CX_n \times I \rightarrow CX_n$ be the homotopy which contracts the cone to the cone point. Let the restriction of $H_i$ for $1 \leq i \leq n-1$ to $X_i \times \{1\}$ and the restriction of $H_n$ to $CX_n \times \{1\}$ be the respective constant maps. Define a homotopy $H:\prod_{i=1}^{n-1} X_i \times CX_n \times I \rightarrow \bigcup_{i=1}^n X_1 \times \cdots \times CX_i \times \cdots \times X_n$ by \[H((x_1,\cdots,x_{n-1},(x_n,t')),t) = \begin{cases} (x_1,x_2,\cdots,x_{n-1},H_n((x_n,t'),nt) & t \in [0,\frac{1}{n}] \\
(H_1(x_1,nt-1),x_2,x_3,\cdots,x_{n-1},*) & t \in [\frac{1}{n},\frac{2}{n}]\\
(*,H_2(x_2,nt-2),x_3,\cdots,x_{n-1},*) & t \in [\frac{2}{n},\frac{3}{n}]\\[-1.5ex]
\vdotswithin{ = } &\vdotswithin{ = } \\[-1ex]
(*,*,\cdots,*,H_i(x_i,nt-i),x_{i+1},\cdots,x_{n-1},*) & t \in [\frac{i}{n},\frac{i+1}{n}]\\[-1.5ex]
\vdotswithin{ = } &\vdotswithin{ = } \\[-1ex]
(*,*,\cdots,*,H_{n-1}(x_{n-1},nt-(n-1))) & t \in [\frac{n-1}{n},1].
\end{cases}\] This is well defined since at each time $t$, only one $x_i$ is not in the base of $CX_i$, at $t=\frac{1}{n}$, $H_n((x_n,t'),t) = *$ and at $t=\frac{i}{n}$, $H_{i-1}(x_{i-1},\frac{ni}{n}-(i-1)) = *$ and $H_{i}(x_{i},\frac{ni}{n}-i) = x_{i}$. At $t=0$, $H(x,0) = k(x)$ and $H(x,1) = *$. Therefore, $H$ is a homotopy between $k$ and the constant map.
\end{proof}

Now, we show that there is a fold map on the level of simplicial complexes that induces a fold map, as defined in Section ~\ref{sec:symsimpdecomp}, in the case of polyhedral products of the form $\ux^K$. For $n\geq 2$, let $K_1, K_2, \cdots, K_n$ be isomorphic simplicial complexes on a common vertex set $[m]$ and for $1 \leq i <j \leq n$, let $\phi_{i,j}:K_i \rightarrow K_j$ be a simplicial isomorphism. Let $L_1$, $L_2$ be isomorphic subcomplexes of $K_1$ and assume there is a simplicial automorphism $\psi:K_1 \rightarrow K_1$ such that $\psi(L_1) = L_2$ and $\psi(L_2) = L_1$. Define $M_2$ by the pushout \[\begin{tikzcd}
	{L_2} & {K_2} \\
	{K_1} & {M_2}
	\arrow["{f_2}", from=1-1, to=1-2]
	\arrow[from=1-2, to=2-2]
	\arrow[hook, from=1-1, to=2-1]
	\arrow[from=2-1, to=2-2]
\end{tikzcd}\] where $f_2$ is the composite \[L_2 \xrightarrow{\psi|_{L_2}} L_1 \xrightarrow{\phi_{1,2}|_{L_1}} \phi_{1,2}(L_1) \hookrightarrow K_2.\] This corresponds to gluing the copy of $L_2$ in $K_1$ to the copy of $L_1$ in $K_2$. Inductively, define $M_n$ by the pushout \[\begin{tikzcd}
	{\phi_{1,n-1}(L_2)} && {K_n} \\
	{M_{n-1}} && {M_n}
	\arrow["{f_n}", from=1-1, to=1-3]
	\arrow[from=1-3, to=2-3]
	\arrow[hook, from=1-1, to=2-1]
	\arrow[from=2-1, to=2-3]
\end{tikzcd}\] where $f_n$ is the composite \[\phi_{1,n-1}(L_{2}) \xrightarrow{\phi_{1,n-1}^{-1}|_{\phi_{1,n-1}(L_2)}} L_2 \xrightarrow{\psi|_{L_2}} L_1 \xrightarrow{\phi_{1,n}|_{L_1}} \phi_{1,n}(L_1) \hookrightarrow K_n.\] This corresponds to gluing the copy of $L_2$ in $K_{n-1}$ which is contained in $M_{n-1}$ to the copy of $L_1$ in $K_n$.

The map $f_n$ is the composite of simplicial isomorphisms followed by an inclusion. Hence, $f_n$ is injective and so is also a simplicial inclusion. Therefore by Proposition ~\ref{inducepushout}, the pushout defining $M_n$ induces a pushout of polyhedral products where since $A_i = *$, the extra terms in the pushout associated with the ghost vertices disappear \[\begin{tikzcd}
	{(X,*)^{\phi_{1,n-1}(L_2)}} & {(X,*)^{K_n}} \\
	{(X,*)^{M_{n-1}}} & {(X,*)^{M_n}}.
	\arrow[from=2-1, to=2-2]
	\arrow[from=1-1, to=2-1]
	\arrow[from=1-2, to=2-2]
	\arrow[from=1-1, to=1-2]
\end{tikzcd}\] This pushout is of the form as in (\ref{definitionofPn}) required by Theorem ~\ref{symDecomp}. Therefore, we obtain the following.

\begin{theorem}
\label{polysymDecomp}
Let $M_n$ be a simplicial complex constructed as above where $n \geq 2$. Let $X$ be a path-connected pointed $CW$-complex. Let $G$ be the homotopy fibre of the inclusion $\ux^{L_2} \hookrightarrow \ux^{K_1}$. Then there is a homotopy equivalence \[\Omega\ux^{M_n} \simeq \Omega\ux^{K_1} \times \Omega F\] where $F \simeq \bigvee\limits_{i=1}^{n-1} \Sigma G$.
\qedno
\end{theorem}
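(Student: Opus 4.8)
The plan is to show that applying the polyhedral product functor to the iterated construction of $M_n$ reproduces, verbatim, the construction of $P_n$ from Section~\ref{sec:symsimpdecomp}, and then simply to invoke Theorem~\ref{symDecomp}. First I would assemble the data that Theorem~\ref{symDecomp} requires. Set $X_i := \ux^{K_i}$ for $1 \le i \le n$; since the $K_i$ are pairwise isomorphic simplicial complexes on $[m]$, Theorem~\ref{isoPP} gives homeomorphisms $\ux^{K_i} \cong \ux^{K_j}$, so the $X_i$ are homeomorphic, path-connected $CW$-complexes, and the simplicial isomorphisms $\phi_{i,j}$ induce, by functoriality of the polyhedral product, homeomorphisms that we again call $\phi_{i,j}: X_i \to X_j$. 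Set $A_1 := \ux^{L_1}$ and $A_2 := \ux^{L_2}$; since $L_1, L_2 \subseteq K_1$ these are subcomplexes of $X_1$, and since $L_1 \cong L_2$ they are homeomorphic. The simplicial automorphism $\psi: K_1 \to K_1$ with $\psi(L_1) = L_2$ and $\psi(L_2) = L_1$ induces a self-homeomorphism of $X_1$; because the polyhedral product is a functor and $\psi$ interchanges the subcomplexes $L_1, L_2$, the induced map interchanges $A_1$ and $A_2$. This is precisely the symmetry data in the hypotheses of Theorem~\ref{symDecomp}.

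Next I would match the iterated pushouts. The map $f_n$ defining $M_n$ is a composite of simplicial isomorphisms followed by a subcomplex inclusion, hence is itself a simplicial inclusion, so Proposition~\ref{inducepushout} applies to the square defining $M_n$. Because $A_i = *$ at every vertex, the terms attached to the ghost vertices in Proposition~\ref{inducepushout} collapse — evaluating $\ux$ on a ghost vertex contributes only a point — so the induced square is literally the pushout of $\ux^{\phi_{1,n-1}(L_2)} \to \ux^{K_n}$ along $\ux^{\phi_{1,n-1}(L_2)} \hookrightarrow \ux^{M_{n-1}}$, with top map the polyhedral product of $f_n$, which by functoriality is the composite of the induced $\phi$'s and $\psi$ followed by the inclusion into $X_n$. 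Comparing with (\ref{definitionofPn}) and tracking the maps through Proposition~\ref{pushoutdefined}, one sees that $\ux^{M_n}$ is homeomorphic to the space $P_n$ built from the data $X_i$, $A_1$, $A_2$, $\phi_{i,j}$, $\psi$ above.

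Having identified $\ux^{M_n}$ with $P_n$, Theorem~\ref{symDecomp} immediately gives $\Omega \ux^{M_n} \simeq \Omega X_1 \times \Omega F$ with $F \simeq \bigvee_{i=1}^{n-1} \Sigma F_{A_2}$, where $F_{A_2}$ is the homotopy fibre of $A_2 \hookrightarrow X_1$, that is of $\ux^{L_2} \hookrightarrow \ux^{K_1}$, which is exactly the space called $G$ in the statement, and $X_1 = \ux^{K_1}$; this is the asserted decomposition. I expect the only real work to be the bookkeeping in the middle step: checking that the pushouts of polyhedral products produced by Proposition~\ref{inducepushout} coincide on the nose with the squares (\ref{definitionofPn}) — in particular that the ghost-vertex contributions genuinely vanish because $A = *$, and that the maps induced by simplicial isomorphisms are compatible with the relevant subcomplex inclusions, so that $\psi(A_1) = A_2$, $\psi(A_2) = A_1$ and the formula for $f_n$ hold as literal equalities of spaces and maps rather than merely up to homotopy. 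Once that compatibility is verified, the theorem follows by direct citation of Theorem~\ref{symDecomp}.
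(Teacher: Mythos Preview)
Your proposal is correct and follows essentially the same approach as the paper: the paper also observes that $f_n$ is a simplicial inclusion, applies Proposition~\ref{inducepushout} with the ghost-vertex terms vanishing since $A_i=*$, recognises the resulting pushout as being of the form~(\ref{definitionofPn}), and then cites Theorem~\ref{symDecomp} directly. Your write-up is simply more explicit about the bookkeeping (verifying the induced homeomorphisms $\phi_{i,j}$ and $\psi$ on polyhedral products) than the paper, which states the theorem with a \qedno\ immediately after the paragraph setting up the pushout.
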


\begin{remark}
As in Theorem ~\ref{symDecomp}, in the case where $K_i = K$ for $1 \leq i \leq n$ and $L_1 = L_2$, we can take $\psi:K_1 \rightarrow K_1$ and $\phi_{i,j}:K_i \rightarrow K_j$ to be the identity map on $K$.
\end{remark}

\section{Applications to polyhedral products associated to graphs}
\label{sec:application}

\subsection{Background on Graph Theory}
\label{sec:backgroundGT}

A graph $G$ is a set of vertices $V(G)$ together with a set of edges $E(G)$ that connect them. In general, this is not a valid simplicial complex as it may have an edge which connects a vertex to itself, or may have multiple edges connecting two vertices $v_0$ and $v_1$. Graphs which do not have either of these are known as \textit{simple} graphs. A graph $G$ is \textit{finite} if its vertex set $V(G)$ and edge set $E(G)$ are finite. A subgraph of $G$ is a graph $H$ such that $V(H) \subseteq V(G)$ and $E(H) \subseteq V(G)$.

There are various notions of adjacency in a graph. Denote the edge connecting two vertices $v$ and $v'$ by $v \cdot v'$. Two vertices $v, v' \in V(G)$ are \textit{adjacent} if there is an edge $e \in E(G)$ such that $e = v \cdot v'$. Two edges $e,e' \in E(G)$ are \textit{adjacent} if they share an end vertex. Finally, a vertex $v \in V(G)$ and $e \in E(G)$ are \textit{adjacent} if $v$ is an end vertex of $e$. 

In most cases, we will consider graphs which are \textit{connected}. This means for any pair of vertices $v$, $w \in V(G)$, there exists a sequence of vertices $v= v_0,v_1, \cdots, v_m = w$ such that $v_i$ is adjacent to $v_{i+1}$ for all $1 \leq i \leq n-1$.

We will consider many special types of graphs. 

\begin{definition}
The \textit{degree} of a vertex $v$ is the number of edges adjacent to $v$.
\end{definition}

\begin{definition}
A \textit{cycle} in a graph $G$ is a sequence of vertices $v_0,v_1,\cdots,v_m$ such that $v_0 = v_m$ and $v_i$ is adjacent to $v_{i+1}$ for all $1 \leq i \leq n-1$.
\end{definition}

\begin{definition}
A graph $G$ is a \textit{tree} if it contains no cycles.
\end{definition}

\begin{definition}
A \textit{path graph} $P_n$ is a tree on $n+1$ vertices with 2 vertices of degree 1 and $n-1$ vertices of degree 2.
\end{definition}

For example, $P_4$ is the following graph

\[\scalebox{1.5}{\begin{tikzpicture}
     \draw (0.0,-0.1) -- (1.2,-0.1); 
      
     \draw [fill] (0.0,-0.1) circle [radius=0.03];
		 \draw [fill] (0.3,-0.1) circle [radius=0.03];
		 \draw [fill] (0.6,-0.1) circle [radius=0.03];
		 \draw [fill] (0.9,-0.1) circle [radius=0.03];
		 \draw [fill] (1.2,-0.1) circle [radius=0.03];
  \end{tikzpicture}}\] 

\begin{definition}
A \textit{cycle graph} $C_L$ of length $L \geq 3$ is a graph with $L$ vertices and $L$ edges which contains a single cycle of length $L$.
\end{definition}

For example, the graph $C_4$ is the following graph

\[\scalebox{1.5}{\begin{tikzpicture}
     \draw (0.0,-0.1) -- (0.8,-0.1) -- (0.8,-0.9) -- (0.0,-0.9) -- (0.0,-0.1); 
      
     \draw [fill] (0.0,-0.1) circle [radius=0.03];
		 \draw [fill] (0.8,-0.1) circle [radius=0.03];
		 \draw [fill] (0.8,-0.9) circle [radius=0.03];
		 \draw [fill] (0.0,-0.9) circle [radius=0.03];
  \end{tikzpicture}}\] Many of the examples of graphs we will consider will be built from these. 

\subsection{Polyhedral products associated to generalised book graphs}
\label{sec:bookgraphex}

The generalised book graph is denoted $B(n,l,p)$ where $1 \leq n \leq l-2$, $l \geq 3$ and $p \geq 2$. The graph $B(n,l,p)$ is $p$ cycles $C_l$ of length $l$ glued together over a common path $P_n$ of length $n$. Formally, define $B(n,l,2)$ as the pushout \[\begin{tikzcd}
	{P_n} & {C_l} \\
	{C_l} & {B(n,l,2).}
	\arrow[hook, from=1-1, to=1-2]
	\arrow[hook, from=1-1, to=2-1]
	\arrow[from=1-2, to=2-2]
	\arrow[from=2-1, to=2-2]
\end{tikzcd}\] Then for $p >2$, define $B(n,l,p)$ iteratively as the pushout \begin{equation}\label{bookgraphdefinition}\begin{tikzcd}
	{P_n} & {C_l} \\
	{B(n,l,p-1)} & {B(n,l,p).}
	\arrow[hook, from=1-1, to=1-2]
	\arrow[hook, from=1-1, to=2-1]
	\arrow[from=1-2, to=2-2]
	\arrow[from=2-1, to=2-2]
\end{tikzcd}\end{equation} For example, $B(1,3,2)$ is the graph \[\begin{tikzpicture} 
	   \draw (-0.1,-0.1) -- (0.5,0.6) -- (0.5,-0.8) -- (-0.1,-0.1);
     \draw (0.5,0.6) -- (1.1,-0.1) -- (0.5,-0.8);
		 \draw [fill] (-0.1,-0.1) circle [radius=0.03];
		 \draw [fill] (0.5,0.6) circle [radius=0.03];
		 \draw [fill] (0.5,-0.8) circle [radius=0.03];
		 \draw [fill] (1.1,-0.1) circle [radius=0.03];
	   \node at (-0.35,-0.1) {1};
		 \node at (0.5,0.85) {2};
		 \node at (0.5,-1.05) {3};
		 \node at (1.35,-0.1) {4};
  \end{tikzpicture}\] and the graph $B(1,3,3)$ is \[\scalebox{1.5}{\begin{tikzpicture} 
     \draw (-0.1,-0.1) -- (0.5,-0.6); 
		 \draw (-0.1,-0.1) -- (0.2,0.5) -- (0.5,-0.6); 
		 \draw (0.9,0.3) -- (0.5,-0.6); 
		 \draw [dashed] (-0.1,-0.1) -- (0.9,0.3);
		 \draw (-0.1,-0.1) -- (-0.6,-0.8) -- (0.5,-0.6); 
  \end{tikzpicture}}\] 

We can apply Theorem ~\ref{polysymDecomp} to $B(n,l,p)$ using (\ref{bookgraphdefinition}). Let $L_1 = L_2 = P_n$, $K_i = C_l$ for $1 \leq i \leq p$ and $\psi$ and $\phi_{1,j}$ be the identity map for $2 \leq j \leq p$. By Theorem ~\ref{polysymDecomp}, we obtain a homotopy equivalence \[\Omega \ux^{B(n,l,p)} \simeq \Omega \ux^{C_l} \times \Omega \left(\bigvee\limits_{i=1}^{p-1} \Sigma G\right)\] where $G$ is the homotopy fibre of the inclusion $\ux^{P_n} \hookrightarrow \ux^{C_l}$. If we specialise to generalised book graphs of the form $B(l,2l,p)$, we can determine the homotopy type of $\Omega \ux^{B(l,2l,p)}$ more precisely.

For the decomposition that follows, we draw the generalised book graph in its planar form. For example $B(2,4,4)$ is a graph of the form  \[\scalebox{1.5}{\begin{tikzpicture}
     \draw (-0.1,-0.1) -- (-0.1,-0.9); 
		 \draw (-0.1,-0.1) -- (-0.5,-0.5) -- (-0.1,-0.9); 
		 \draw (-0.1,-0.1) -- (0.3,-0.5) -- (-0.1,-0.9); 
		 \draw (-0.1,-0.1) -- (0.7,-0.5) -- (-0.1,-0.9);
		 \draw (-0.1,-0.1) -- (-0.9,-0.5) -- (-0.1,-0.9);
      
     \draw [fill] (-0.1,-0.1) circle [radius=0.03];
		 \draw [fill] (-0.1,-0.5) circle [radius=0.03];
		 \draw [fill] (-0.1,-0.9) circle [radius=0.03];
		 \draw [fill] (-0.5,-0.5) circle [radius=0.03];
		 \draw [fill] (0.3,-0.5) circle [radius=0.03];
		 \draw [fill] (-0.9,-0.5) circle [radius=0.03];
		 \draw [fill] (0.7,-0.5) circle [radius=0.03];
  \end{tikzpicture}}\] In this form, the graph $B(l,2l,p)$ consists of $p+1$ paths of length $l$ glued together over their endpoints. Write $B(l,2l,p)$ as the pushout \[\begin{tikzcd}
	{\{v_0,v_l\}} & {P_l} \\
	{B(l,2l,p-1)} & {B(l,2l,p)}
	\arrow[from=1-1, to=1-2]
	\arrow[from=1-1, to=2-1]
	\arrow[from=1-2, to=2-2]
	\arrow[from=2-1, to=2-2]
\end{tikzcd}\] where $v_0,v_l$ are the endpoints of $P_l$. Let $K_i = P_l$, $L_1 = L_2 = \{v_0,v_l\}$ and $\psi$ and $\phi_{1,j}$ be the identity map for $2 \leq j \leq p+1$. Applying Theorem ~\ref{polysymDecomp}, we obtain an alternative formulation of a homotopy decomposition for $\Omega \ux^{B(l,2l,p)}$ as \[\Omega \ux^{B(l,2l,p)} \simeq \Omega\ux^{P_l} \times \Omega \left(\bigvee\limits_{i=1}^{p} \Sigma F\right)\] where $F$ is the homotopy fibre of the inclusion $\ux^{\{v_0,v_l\}} \hookrightarrow \ux^{P_l}$. In this case, we can identify $\Omega \ux^{P_l}$ and $F$. The following result from \cite{T} allows us to determine the homotopy type of $\Omega\ux^{P_l}$.

\begin{proposition}
\label{Pathisdisjointvertex}
    Let $k \geq 1$ and suppose that there is a sequence of simplicial complexes \[K_1 = \Delta^k \subseteq K_2 \subseteq \cdots \subseteq K_{\ell}\] such that, for $i > 1$, $K_i = K_{i-1} \cup_{\sigma_i} \Delta^k$ where $\sigma_i = \Delta^{k-1}$. Let $K = K_\ell$ and observe that $K$ is a simplicial complex on $k +\ell$ vertices. Then there is a homotopy equivalence \[(CX,X)^K \simeq (CX,X)^{V_\ell}\] where $V_\ell$ is $\ell$ disjoint points.
\end{proposition}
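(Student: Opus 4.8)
The plan is to argue by induction on $\ell$, comparing the defining pushout of $K_{\ell}$ with a decomposition of $(CX,X)^{V_{\ell}}$ read off from the definition of the polyhedral product. For $\ell=1$ we have $K=\Delta^{k}$, so $(CX,X)^{\Delta^{k}}=(CX)^{k+1}$ and $(CX,X)^{V_{1}}=CX$; both are contractible. For the inductive step, assume $(CX,X)^{K_{\ell-1}}\simeq (CX,X)^{V_{\ell-1}}$. Since $K_{\ell}$ is the pushout of $\Delta^{k}\hookleftarrow \sigma_{\ell}\hookrightarrow K_{\ell-1}$ with $\sigma_{\ell}=\Delta^{k-1}$, Proposition~\ref{inducepushout} gives a pushout of polyhedral products over the vertex set $[k+\ell]$ of $K_{\ell}$; recording the ghost-vertex factors it has the form
\[\begin{tikzcd}
(CX)^{k}\times X^{\ell} \arrow[r] \arrow[d] & (CX)^{k+1}\times X^{\ell-1} \arrow[d] \\
(CX,X)^{K_{\ell-1}}\times X \arrow[r] & (CX,X)^{K_{\ell}} ,
\end{tikzcd}\]
where $(CX)^{k}$ is indexed by the vertices of $\sigma_{\ell}$, the extra $CX$ on the top right by the unique new vertex $v$ of $\Delta^{k}$, and the extra $X$ on the bottom left is the factor of $v$ regarded as a ghost vertex of $K_{\ell-1}$. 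The top map is $\mathrm{id}_{(CX)^{k}}\times j\times\mathrm{id}_{X^{\ell-1}}$ for $j\colon X\hookrightarrow CX$, which is a cofibration, so the square is a homotopy pushout.

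Next I would simplify this square by deformation retracting onto the cone point the factor $(CX)^{k}$ indexed by $\sigma_{\ell}$ in the two top corners. This is compatible with the top map (the identity on that factor) and with the left map (a map out of a space agrees up to homotopy with its restriction to a deformation retract precomposed with the retraction), so it gives a strict, objectwise homotopy equivalence of squares and hence does not change the homotopy pushout. The result is that $(CX,X)^{K_{\ell}}$ is the homotopy pushout of
\[
(CX,X)^{K_{\ell-1}}\times X \;\xleftarrow{\ \iota\times\mathrm{id}\ }\; X^{\ell-1}\times X \;\xrightarrow{\ \mathrm{id}\times j\ }\; X^{\ell-1}\times CX ,
\]
where $\iota\colon X^{\ell-1}\to (CX,X)^{K_{\ell-1}}$ sends $X^{\ell-1}$ into the subproduct $(CX,X)^{\emptyset}=\prod X$ with the $\sigma_{\ell}$-coordinates at the basepoint.

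The crucial step is that $\iota$ is null homotopic. It factors through the $0$-skeleton as $X^{\ell-1}\to X^{k+\ell-1}=(CX,X)^{\emptyset}\hookrightarrow (CX,X)^{V_{k+\ell-1}}\hookrightarrow (CX,X)^{K_{\ell-1}}$, and $X^{k+\ell-1}\hookrightarrow (CX,X)^{V_{k+\ell-1}}$ is null homotopic by Lemma~\ref{conenullhtpy} (take the last coordinate as the cone factor and precompose with $\mathrm{id}\times j$ on the rest). Hence the left map above is homotopic to $(\vec{y},x)\mapsto(\ast,x)$, and $(CX,X)^{K_{\ell}}$ is the homotopy pushout of $(CX,X)^{K_{\ell-1}}\times X\xleftarrow{(\ast,\,\mathrm{pr})} X^{\ell-1}\times X\xrightarrow{\mathrm{id}\times j} X^{\ell-1}\times CX$.

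Finally, straight from the definition of the polyhedral product, $(CX,X)^{V_{\ell}}=\bigl((CX,X)^{V_{\ell-1}}\times X\bigr)\cup_{X^{\ell-1}\times X}\bigl(X^{\ell-1}\times CX\bigr)$ with gluing maps $\iota_{V}\times\mathrm{id}$ and $\mathrm{id}\times j$, where $\iota_{V}\colon X^{\ell-1}\hookrightarrow (CX,X)^{V_{\ell-1}}$ is the inclusion of $(CX,X)^{\emptyset}$; as before $\iota_{V}$ is null homotopic (by Lemma~\ref{conenullhtpy}, or trivially when $\ell=2$, the target being contractible). So $(CX,X)^{V_{\ell}}$ is the homotopy pushout of $(CX,X)^{V_{\ell-1}}\times X\xleftarrow{(\ast,\,\mathrm{pr})} X^{\ell-1}\times X\xrightarrow{\mathrm{id}\times j} X^{\ell-1}\times CX$, which has the same shape as the diagram computing $(CX,X)^{K_{\ell}}$; the inductive hypothesis $(CX,X)^{K_{\ell-1}}\simeq (CX,X)^{V_{\ell-1}}$ then yields $(CX,X)^{K_{\ell}}\simeq (CX,X)^{V_{\ell}}$. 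The main obstacle, beyond bookkeeping the ghost-vertex factors and checking compatibility of the deformation retractions with the structure maps, is exactly this null-homotopy of the gluing map into $(CX,X)^{K_{\ell-1}}$: it is what decouples the induction from the (unspecified) combinatorial position of $\sigma_{\ell}$ in $K_{\ell-1}$ and reduces everything to the homotopy type of $(CX,X)^{K_{\ell-1}}$.
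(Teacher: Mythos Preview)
Your argument is correct. The paper does not give its own proof of this proposition; it is quoted from~\cite{T} and marked with a \verb|\qedno|. So there is no in-paper proof to compare against, but your induction via Proposition~\ref{inducepushout}, the contraction of the $(CX)^{k}$-factor indexed by $\sigma_{\ell}$, and the null-homotopy of the attaching map through Lemma~\ref{conenullhtpy} is sound. The one place to be slightly more careful in writing is the ``objectwise deformation'' step: what actually makes this rigorous is the map of spans
\[
\bigl(X^{\ell-1}\times CX \leftarrow X^{\ell-1}\times X \rightarrow (CX,X)^{K_{\ell-1}}\times X\bigr)\longrightarrow
\bigl((CX)^{k+1}\times X^{\ell-1}\leftarrow (CX)^{k}\times X^{\ell}\rightarrow (CX,X)^{K_{\ell-1}}\times X\bigr)
\]
given by inserting cone points in the $\sigma_{\ell}$-coordinates; this map is strictly commutative and an objectwise homotopy equivalence, so the gluing lemma applies. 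Likewise, in the final comparison you implicitly use that any homotopy equivalence $(CX,X)^{K_{\ell-1}}\simeq (CX,X)^{V_{\ell-1}}$ can be taken to be basepoint-preserving (both spaces are path-connected), so the square with left map $\ast\times\mathrm{id}$ commutes on the nose. With those small clarifications the proof is complete.

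It is worth noting that the surrounding material in the paper (Proposition~\ref{gluingconeVertices} and the proof of Lemma~\ref{naturalendpointinclusion}) indicates that the original argument in~\cite{T} proceeds instead through Lemma~\ref{coordinatehtpyequiv}, producing at each stage an explicit wedge decomposition rather than only a homotopy equivalence. Your approach is more direct for the bare statement, while the route via Lemma~\ref{coordinatehtpyequiv} yields the finer splitting information the paper needs later.
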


We can determine $\Omega\ux^{P_l}$ by using Theorem ~\ref{coneloop}, which gives us that \[\Omega\ux^{P_l} \simeq \left(\prod\limits_{i=0}^l \Omega X \right) \times \Omega (C\Omega X,\Omega X)^{P_{l}}.\] The path $P_l$ can be constructed interatively by gluing 1-simplices along a vertex. Therefore, Proposition ~\ref{Pathisdisjointvertex} implies that there is a homotopy equivalence $(C\Omega X,\Omega X)^{P_l} \simeq (C \Omega X,\Omega X)^{V_l}$. Now consider the homotopy fibration \[(C\Omega X,\Omega X)^{V_l} \rightarrow (X,*)^{V_l} \rightarrow \prod\limits_{i=1}^l \Omega X_i\] in Theorem ~\ref{coneloop} for the case $K = V_l$. By definition, $(X,*)^{V_l} = \bigvee_{i=1}^l X$, and so $(C\Omega X,\Omega X)^{V_l}$ is the homotopy fibre of the inclusion $\bigvee_{i=1}^l X \hookrightarrow \prod_{i=1}^l \Omega X$. Porter \cite{P} identified the homotopy type of this fibre as \[\bigvee\limits_{k=2}^l \bigvee\limits_{1 \leq i_1 < \cdots < i_k \leq l} (\Sigma \Omega X_{i_1} \wedge \cdots \wedge \Omega X_{i_k})^{\vee(k-1)}\] where in our case, each $X_{i_j} = X$. Let $\mathcal{W}$ be the set of topological spaces homotopy equivalent to a wedge of spaces where each summand is a suspension of smashes of $\Omega X_i$'s. Porter's identification of the homotopy fibre gives us the following result.

\begin{lemma}
\label{Pathexplicithtpyequiv}
    There is a homotopy equivalence \[(C\Omega X,\Omega X)^{P_l} \simeq \bigvee\limits_{k=2}^l \bigvee\limits_{1 \leq i_1 < \cdots < i_k \leq l} (\Sigma \Omega X_{i_1} \wedge \cdots \wedge \Omega X_{i_k})^{\vee(k-1)}.\] In particular, $(C\Omega X,\Omega X)^{P_l} \in \mathcal{W}$.
    \qedno
\end{lemma}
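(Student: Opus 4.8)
The plan is to identify $(C\Omega X,\Omega X)^{P_l}$ with a polyhedral product over disjoint points and then invoke Porter's computation of the homotopy fibre of a wedge-into-product inclusion. First I would observe that the path graph $P_l$ on $l+1$ vertices is obtained by iteratively attaching $1$-simplices along a single vertex: begin with one edge, then glue each successive edge to the previous graph along one of its endpoints. This is exactly the hypothesis of Proposition \ref{Pathisdisjointvertex} with $k=1$, where $K_1 = \Delta^1$ has two vertices and each of the remaining $l-1$ attachments adds one new vertex, so the number of attaching stages is $\ell = l$. Proposition \ref{Pathisdisjointvertex} then yields a homotopy equivalence
\[(C\Omega X,\Omega X)^{P_l} \simeq (C\Omega X,\Omega X)^{V_l},\]
where $V_l$ is $l$ disjoint points.

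Next I would apply Theorem \ref{coneloop} with $K = V_l$. Since $V_l$ is a disjoint union of $l$ vertices, $\ux^{V_l} = \bigvee_{i=1}^l X$, and the map in Theorem \ref{coneloop} is the canonical inclusion $\bigvee_{i=1}^l X \hookrightarrow \prod_{i=1}^l X$ induced by $V_l \hookrightarrow \Delta^{l-1}$. Consequently $(C\Omega X,\Omega X)^{V_l}$ is the homotopy fibre of this inclusion. Porter's theorem \cite{P} identifies that fibre as
\[\bigvee_{k=2}^l \bigvee_{1 \leq i_1 < \cdots < i_k \leq l} (\Sigma \Omega X_{i_1} \wedge \cdots \wedge \Omega X_{i_k})^{\vee(k-1)},\]
and specialising $X_i = X$ gives the stated homotopy equivalence. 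Since each wedge summand is a suspension of a smash of $\Omega X_i$'s, it follows immediately that $(C\Omega X,\Omega X)^{P_l} \in \mathcal{W}$.

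The argument is a chain of three cited inputs, so I do not expect a deep obstacle; the point requiring most care is the bookkeeping in the first step. One must verify that $P_l$ genuinely meets the attachment hypothesis of Proposition \ref{Pathisdisjointvertex} and that the resulting disjoint-vertex complex has exactly $l$ vertices, so that Porter's wedge is indexed over subsets of $[l]$ and the final formula comes out as claimed. A secondary point is confirming that the map $\ux^{V_l} \to \prod_{i=1}^l X$ supplied by Theorem \ref{coneloop} really is the standard wedge-into-product inclusion, since Porter's result is stated for precisely that map.
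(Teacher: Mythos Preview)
Your proposal is correct and follows essentially the same route as the paper: apply Proposition~\ref{Pathisdisjointvertex} with $k=1$ to obtain $(C\Omega X,\Omega X)^{P_l}\simeq (C\Omega X,\Omega X)^{V_l}$, then use Theorem~\ref{coneloop} to identify this as the homotopy fibre of $\bigvee_{i=1}^l X\hookrightarrow \prod_{i=1}^l X$, and finally invoke Porter's identification of that fibre. The bookkeeping you flag (that $\ell=l$ and the map from Theorem~\ref{coneloop} is the standard wedge-to-product inclusion) is exactly what the paper uses, so there is nothing further to check.
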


Next we determine $F$. First consider the path of length 2. Let the path be $v_0 \cdot v_1 \cdot v_2$. We wish to identify the homotopy fibre of the inclusion $\ux^{\{v_0,v_2\}} \hookrightarrow \ux^{P_2}$. In this case, the path $P_2$ decomposes as the join $\{v_0,v_2\} *\{v_1\}$ so $\ux^{P_2} \cong \ux^{\{v_0,v_2\}} \times \ux^{\{v_1\}}$. By definition of the polyhedral product, this is equivalent to identifying the homotopy fibre of the inclusion \[X \vee X \hookrightarrow (X \vee X) \times X\] into the first factor. Therefore, $F \simeq \Omega X$ in this case.

We now consider the case where the length of the path is greater than or equal to 3. Consider the following fibration diagram which defines the space $F'$, where we use the fibration given in Theorem ~\ref{coneloop}:
	
\[\begin{tikzcd}
	{\Omega(C\Omega X,\Omega X)^{P_l}} & F' & {(C\Omega X,\Omega X})^{\{v_0,v_l\}} & {(C\Omega X,\Omega X)^{P_l}} \\
	{\Omega\ux^{P_l}} & {F} & {\ux^{\{v_0,v_l\}}} & {\ux^{P_l}} \\
	{\prod\limits_{i=0}^l \Omega X_i} & {\prod\limits_{i=1}^{l-1} \Omega X_i} & {X_0 \times X_l} & {\prod\limits_{i=0}^l X_i.}
	\arrow[from=1-3, to=2-3]
	\arrow[from=1-3, to=1-4]
	\arrow[from=1-4, to=2-4]
	\arrow[from=2-3, to=2-4]
	\arrow[from=3-3, to=3-4]
	\arrow[from=2-3, to=3-3]
	\arrow["s",from=2-4, to=3-4]
	\arrow[from=1-2, to=1-3]
	\arrow[from=2-2, to=2-3]
	\arrow[from=3-2, to=3-3]
	\arrow[from=2-2, to=3-2]
	\arrow[from=1-2, to=2-2]
	\arrow["t", from=3-1, to=3-2]
	\arrow[from=2-1, to=2-2]
	\arrow["\Omega s", from=2-1, to=3-1]
	\arrow[from=1-1, to=1-2]
	\arrow[from=1-1, to=2-1]
\end{tikzcd}\] By Theorem ~\ref{coneloop}, the map $\Omega \ux^{P_l} \xrightarrow{\Omega s} \prod_{i=0}^l \Omega X_i$ has a right homotopy inverse. Also, the map $\Omega X_0 \times \Omega X_l \rightarrow \prod_{i=0}^l \Omega X_i$ is an inclusion and so $t$ has a right homotopy inverse. Therefore, the composite $t \circ \Omega s$ has a right homotopy inverse. Hence, it follows from Lemma ~\ref{Fibrethomotopyequiv} that there exists a homotopy equivalence \[F \simeq \prod\limits_{i=1}^{l-1}\Omega X_i \times F'.\]

Now we need to determine $F'$. To do this, we require an alternative homotopy equivalence for $(C\Omega X,\Omega X)^{P_l}$ from \cite{T}. 

\begin{proposition}
\label{gluingconeVertices}
Let $K$ be a simplicial complex on the vertex set $\{1,\cdots,m\}$. Suppose that $K=K_1 \cup \Delta^k$ where: (i) $K_1$ is a simplicial complex on the vertex set $\{1,\cdots,m-1\}$ and $\{i\} \in K_1$ for $1 \leq i \leq m-1$; (ii) $\Delta^k$ is on the vertex set $\{m-k,\cdots,m\}$, and (iii) $K_1 \cap \Delta^k$ is a $(k-1)$-simplex on the vertex set $\{m-k,\cdots,m-1\}$. Then there is a homotopy equivalence \[\cxx^K \simeq \left((\prod\limits_{i=1}^{m-k-1}X_i)*X_m\right)\vee\left(\cxx^{K_1} \rtimes X_m\right).\]
\qedno
\end{proposition}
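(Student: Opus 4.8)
The plan is to exhibit $\cxx^{K}$ as a homotopy pushout of the shape appearing in Lemma~\ref{coordinatehtpyequiv} and then apply that splitting. Write $I=\{m-k,\dots,m-1\}$, so that $\Delta^{k-1}$ is the full simplex on $I$; hypothesis (iii) says precisely that, over the common vertex set $[m]$, one has $K=\bar K_1\cup_{\bar\Delta^{k-1}}\bar\Delta^{k}$ as a pushout of simplicial complexes. Applying Proposition~\ref{inducepushout} to this pushout and reading off the corners via the ghost-vertex recipe gives a homotopy pushout of polyhedral products with top-left corner $P\times X_m$, top-right corner $P\times CX_m$, bottom-left corner $\cxx^{K_1}\times X_m$ and bottom-right corner $\cxx^{K}$, where $P:=\cxx^{\Delta^{k-1}}=\prod_{i=1}^{m-k-1}X_i\times\prod_{i\in I}CX_i$ (formed over the vertex set $[m-1]$); here the top map is $\mathrm{id}_P\times(X_m\hookrightarrow CX_m)$ and the left map is $j\times\mathrm{id}_{X_m}$ for the map $j\colon P\to\cxx^{K_1}$ induced by $\Delta^{k-1}\hookrightarrow K_1$. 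Since $CX_m$ is contractible I would replace $P\times CX_m$ by $P$, which turns the top map into the projection $\pi_1\colon P\times X_m\to P$; thus $\cxx^{K}$ is the homotopy pushout of $\cxx^{K_1}\times X_m\xleftarrow{\,j\times\mathrm{id}\,}P\times X_m\xrightarrow{\,\pi_1\,}P$.

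The crucial point is that $j$ is null-homotopic. (If $m-k-1=0$ then $P$ is a product of cones and this is immediate, so assume $m\ge 3$.) By (iii) the face $I$ lies in $K_1$, so $j$ is the inclusion of the coordinate piece of $\cxx^{K_1}$ indexed by $I$, namely $\prod_{i=1}^{m-k-1}X_i\times\prod_{i\in I}CX_i$. Fix a vertex $v_0\in I$. Inside this piece one can contract the cone coordinates $CX_i$ for $i\in I\setminus\{v_0\}$ to their cone points and then slide those points down to the basepoints of the $X_i$; this deformation never leaves the piece, hence stays in $\cxx^{K_1}$, and exhibits $j$ as homotopic to a map whose image lies in the coordinate piece $CX_{v_0}\times\prod_{i\ne v_0}X_i$ of $\cxx^{K_1}$. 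Because $\{i\}\in K_1$ for every $i$ by hypothesis (i), the $0$-skeleton of $K_1$ is the complex on $m-1$ disjoint points; writing $W$ for its polyhedral product, the deformed map factors through the inclusions $CX_{v_0}\times\prod_{i\ne v_0}X_i\hookrightarrow W\hookrightarrow\cxx^{K_1}$. Lemma~\ref{conenullhtpy}, applied to these $m-1$ disjoint points with $v_0$ playing the role of the distinguished vertex, says the first of these inclusions is null-homotopic; hence $j$ is null-homotopic.

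With $j\simeq\ast$, the homotopy pushout above (after transposing the square) is exactly the situation of Lemma~\ref{coordinatehtpyequiv} in its null-homotopic version --- the version already obtained inside the proof of Lemma~\ref{naturalhtpyequiv} --- with $A=P$, $B=X_m$ and $C=\cxx^{K_1}$. This gives $\cxx^{K}\simeq(P\ast X_m)\vee(\cxx^{K_1}\rtimes X_m)$. Finally $P=\prod_{i=1}^{m-k-1}X_i\times\prod_{i\in I}CX_i$ deformation retracts onto $\prod_{i=1}^{m-k-1}X_i$ by collapsing the cones, and since the join of $CW$-complexes is a homotopy invariant we obtain $P\ast X_m\simeq\big(\prod_{i=1}^{m-k-1}X_i\big)\ast X_m$, which yields the stated equivalence.

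I expect the null-homotopy of $j$ to be the main obstacle. The delicate part is to carry out the contracting homotopy entirely within a single coordinate piece of $\cxx^{K_1}$, so that it genuinely deforms $j$ as a map into $\cxx^{K_1}$, and to recognise that hypothesis (i) is exactly what allows the resulting map to be absorbed into the $0$-skeleton polyhedral product where Lemma~\ref{conenullhtpy} applies. One should also be careful that Lemma~\ref{coordinatehtpyequiv} is used in its null-homotopic rather than its literal $\ast\times\mathrm{id}_B$ form, which is legitimate because homotopy pushouts are invariant under homotoping their defining maps.
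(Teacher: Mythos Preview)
Your argument is correct and follows the same strategy that the paper attributes to \cite{T}: in the paper this proposition is quoted without proof, but the argument is reproduced in the special case $k=1$ within the proof of Lemma~\ref{naturalendpointinclusion}, and your outline matches that reproduction step for step (the induced pushout of polyhedral products, the factorisation of $j$ through the $0$-skeleton polyhedral product, the null-homotopy via Lemma~\ref{conenullhtpy}, and the appeal to the null-homotopic form of Lemma~\ref{coordinatehtpyequiv}). The only addition you make is the preliminary contraction of the extra cone coordinates $CX_i$ for $i\in I\setminus\{v_0\}$, which is exactly what is needed to pass from the general $k$ to the one-cone situation handled by Lemma~\ref{conenullhtpy}; this step is sound since the deformation stays inside the coordinate piece indexed by $I$.
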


Applying Proposition ~\ref{gluingconeVertices} with $K_1 = \{v_0\}$ and $\Delta^0 = \{v_l\}$, \begin{equation}\label{twovertdecomp}(C \Omega X,\Omega X)^{\{v_0,v_l\}} \simeq (\Omega X_0 * \Omega X_l) \vee ((C \Omega X,\Omega X)^{\{v_0\}} \rtimes \Omega X_l)\end{equation}\[ \simeq (\Omega X_0 * \Omega X_l) \vee (C \Omega X_0 \rtimes \Omega X_l) \simeq \Omega X_0 * \Omega X_l\] where the last homotopy equivalence follows since $C \Omega X_0$ is contractible and so $C \Omega X_0 \rtimes \Omega X_l$ is contractible. Now applying Proposition ~\ref{gluingconeVertices} with $K_1 = P_{l-1}$ on the vertex set $\{0,\cdots,l-1\}$ and $\Delta^1$ on the vertex set $\{l-1,l\}$ we obtain \[(C\Omega X,\Omega X)^{P_l} \simeq \left((\prod\limits_{i=0}^{l-2} \Omega X) * \Omega X\right) \vee \left((C\Omega X,\Omega X)^{P_{l-1}} \rtimes \Omega X\right).\] The proof of Proposition  ~\ref{gluingconeVertices} uses Lemma ~\ref{coordinatehtpyequiv}, therefore if the hypotheses of Lemma ~\ref{naturalhtpyequiv} also hold, the homotopy equivalence in Proposition ~\ref{gluingconeVertices} is natural. We wish to show this is the case for $(CX,X)^{\{v_0,v_l\}}$ and $(CX,X)^{P_l}$. To do this, we reproduce the proof of Proposition  ~\ref{gluingconeVertices} from \cite{T} and show that the hypotheses of Lemma ~\ref{naturalhtpyequiv} hold. 

\begin{lemma}
\label{naturalendpointinclusion}
Let $P_l$ be the path of length $l$ and let $v_0$, $v_l$ be the end vertices. The homotopy equivalence in Proposition ~\ref{gluingconeVertices} is natural with respect to the inclusion $(CX,X)^{\{v_0,v_l\}}\hookrightarrow (CX,X)^{P_l}$. In particular, this inclusion can be written, up to homotopy equivalences, as the inclusion \[X_0 * X_l \hookrightarrow \left(\left(\prod\limits_{i=0}^{l-2} X_i\right) * X_l\right) \vee \left((C\Omega X,\Omega X)^{P_{l-1}} \rtimes X_l\right)\] into the left wedge summand.
\end{lemma}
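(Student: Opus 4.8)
The plan is to run Theriault's proof of Proposition~\ref{gluingconeVertices} (from \cite{T}) in parallel for the two simplicial pushouts
\[\{v_0,v_l\} = \{v_0\}\cup_{\emptyset}\{v_l\}\qquad\text{and}\qquad P_l = P_{l-1}\cup_{\{v_{l-1}\}}\Delta^1,\]
where in the second $\Delta^1$ is the terminal edge $v_{l-1}\cdot v_l$ and $K_1=P_{l-1}$ has vertex set $\{v_0,\dots,v_{l-1}\}$, checking along the way that the data falls under Lemma~\ref{naturalhtpyequiv} and then reading off the induced map of wedge decompositions. By Proposition~\ref{inducepushout} each of $(CX,X)^{\{v_0,v_l\}}$ and $(CX,X)^{P_l}$ is the pushout of the polyhedral products over $K_1\cap\Delta^k$, $K_1$ and $\Delta^k$, formed over the common vertex set with ghost vertices. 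Because the $\Delta^k$-vertices carry cone coordinates, the leg $(CX,X)^{\overline{K_1\cap\Delta^k}}\to(CX,X)^{\overline{\Delta^k}}$ is, after deformation retracting those cone coordinates, a projection $A\times B\xrightarrow{\pi_1}A$ with $B=X_l$, while the opposite leg has the shape $A\times B\xrightarrow{f\times\mathrm{id}_B}C\times B$; explicitly $f$ is the cone inclusion $X_0\hookrightarrow CX_0$ when $K=\{v_0,v_l\}$ (so $A=X_0$, $C=CX_0$), and is the inclusion $(CX,X)^{\overline{\{v_{l-1}\}}}\hookrightarrow(CX,X)^{P_{l-1}}$ of the polyhedral product over the single endpoint $v_{l-1}$ when $K=P_l$ (so $A=\bigl(\prod_{i=0}^{l-2}X_i\bigr)\times CX_{l-1}$ and $C=(CX,X)^{P_{l-1}}$). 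Lemma~\ref{coordinatehtpyequiv} then gives $(CX,X)^K\simeq(A*X_l)\vee(C\rtimes X_l)$, which is the content of Proposition~\ref{gluingconeVertices} in these cases.

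Next I would produce the null-homotopies of $f$ needed to put the data into the form of (\ref{eq:pushoutsfornatural})--(\ref{eq:diagramsfornatural}). For $K=\{v_0,v_l\}$, take $H\colon X_0\times I\to CX_0$ to be (a reparameterisation of) the standard cone quotient: it is constant on $X_0\times\{1\}$ and restricts on $X_0$ to the standard quotient. For $K=P_l$, observe that the endpoint inclusion $(CX,X)^{\overline{\{v_{l-1}\}}}\hookrightarrow(CX,X)^{P_{l-1}}$ factors through the inclusion of $(CX,X)^{\overline{\{v_{l-1}\}}}$ into the polyhedral product over the $0$-skeleton of $P_{l-1}$, namely over $l$ disjoint points with $v_{l-1}$ the cone vertex. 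By Lemma~\ref{conenullhtpy} that inclusion is null-homotopic via an explicit homotopy which is constant at time $1$ and, up to reparameterisation, is the standard quotient $X_i\times I\to CX_i$ on each base coordinate; pushing it forward along the inclusion into $(CX,X)^{P_{l-1}}$ gives the required $H'$, still constant at time $1$ and still the standard quotient on each base coordinate.

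Then I would identify the comparison maps. The inclusion $\{v_0,v_l\}\hookrightarrow P_l$ underlies a map of the two simplicial pushout diagrams ($v_0\mapsto v_0$, $v_l\mapsto v_l$), and functoriality of the polyhedral product turns this into maps $b=\mathrm{id}_{X_l}$, $c\colon CX_0=(CX,X)^{\{v_0\}}\hookrightarrow(CX,X)^{P_{l-1}}$ (place $CX_0$ in the $v_0$-coordinate, basepoints elsewhere) and $a\colon X_0\hookrightarrow\bigl(\prod_{i=0}^{l-2}X_i\bigr)\times CX_{l-1}$ (inclusion of the first coordinate, basepoints elsewhere), which assemble with $H$ and $H'$ into the data of Lemma~\ref{naturalhtpyequiv}. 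The one point requiring work is that the square (\ref{eq:diagramsfornatural}) built from $H$, $H'$, $a\times\mathrm{id}_I$ and $c$ commutes strictly: both composites carry $(x_0,t)$ to the point of $(CX,X)^{P_{l-1}}$ whose $v_0$-coordinate is the image of $(x_0,t)$ under the standard quotient $X_0\times I\to CX_0$ and all of whose other coordinates sit at the basepoint. Here one uses crucially that the reduced cone collapses $\{*\}\times I$, so the stages of the homotopy of Lemma~\ref{conenullhtpy} that act only on basepoint coordinates are constant, and one reparameterises $H$ on the source to match. Since the induced pushout map $(CX,X)^{\{v_0,v_l\}}\to(CX,X)^{P_l}$ is the inclusion (by Proposition~\ref{inducepushout} and Theorem~\ref{inducedinclusion}), Lemma~\ref{naturalhtpyequiv} then supplies a homotopy-commutative square with this inclusion along the bottom, the two Proposition~\ref{gluingconeVertices} equivalences as verticals, and top map $(a*\mathrm{id}_{X_l})\vee(c\rtimes\mathrm{id}_{X_l})$.

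Finally I would simplify. On the source, $CX_0\rtimes X_l$ is contractible since $CX_0$ is, so the source equivalence becomes $(CX,X)^{\{v_0,v_l\}}\simeq X_0*X_l$; on the target, $\bigl(\prod_{i=0}^{l-2}X_i\bigr)\times CX_{l-1}\simeq\prod_{i=0}^{l-2}X_i$, and $a$ becomes the first-factor inclusion $X_0\hookrightarrow\prod_{i=0}^{l-2}X_i$. Hence $(a*\mathrm{id}_{X_l})\vee(c\rtimes\mathrm{id}_{X_l})$ restricts on the $X_0*X_l$ summand to the inclusion $X_0*X_l\hookrightarrow\bigl((\prod_{i=0}^{l-2}X_i)*X_l\bigr)$ followed by the inclusion of the left wedge summand, which is exactly the asserted description. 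I expect the main obstacle to be the strict commutativity of (\ref{eq:diagramsfornatural}): one has to unwind the pushout models of Proposition~\ref{gluingconeVertices} carefully enough to confirm that $a$ is the first-coordinate inclusion and to match the ``cone up one coordinate at a time'' homotopy of Lemma~\ref{conenullhtpy} on the target with the single cone quotient on the source, which is an exercise in tracking ghost vertices, basepoints and reparameterisations.
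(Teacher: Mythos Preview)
Your proposal is correct and follows essentially the same route as the paper: you set up the two pushouts for $\{v_0,v_l\}$ and $P_l$ in the form required by Lemma~\ref{coordinatehtpyequiv}, produce the compatible null-homotopies via the cone quotient and Lemma~\ref{conenullhtpy}, verify the strict commutativity of (\ref{eq:diagramsfornatural}) using the ``Moreover'' clause of Lemma~\ref{conenullhtpy}, and then apply Lemma~\ref{naturalhtpyequiv} and contract the cone summands---exactly as the paper does. The only cosmetic difference is that the paper introduces intermediate spaces $R$, $R'$ (obtained by projecting off the $CX_l$ coordinate) and a separate diagram (\ref{PPtoR}) to identify the induced map $R\to R'$ with the original inclusion, whereas you fold this identification into the appeal to Proposition~\ref{inducepushout}.
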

\begin{proof}
In the following, we will label the vertices by their corresponding vertices for clarity however we are still considering the case $(CX,X)$. Define $\{v_0,v_l\}$ by the pushout \[\begin{tikzcd}
	\emptyset & {v_l} \\
	{v_0} & {\{v_0,v_l\}.}
	\arrow[from=1-2, to=2-2]
	\arrow[from=1-1, to=1-2]
	\arrow[from=1-1, to=2-1]
	\arrow[from=2-1, to=2-2]
\end{tikzcd}\] By Proposition ~\ref{inducepushout}, this induces a pushout of polyhedral products \[\begin{tikzcd}
	{X_0 \times X_l} & {X_0 \times CX_l} \\
	{CX_0 \times X_l} & {(CX,X)^{\{v_0,v_l\}}.}
	\arrow[from=1-2, to=2-2]
	\arrow["{id_{X_0} \times i_l}", from=1-1, to=1-2]
	\arrow["{i_0 \times id_{X_l}}", from=1-1, to=2-1]
	\arrow[from=2-1, to=2-2]
\end{tikzcd}\] Consider the iterated pushout \begin{equation}\label{verticesnatural}\begin{tikzcd}
	{X_0 \times X_l} & {X_0 \times CX_l} & {X_0} \\
	{CX_0 \times X_l} & {(CX,X)^{\{v_0,v_l\}}} & R
	\arrow["{id_{X_0} \times i_l}", from=1-1, to=1-2]
	\arrow["{i_0 \times id_{X_l}}", from=1-1, to=2-1]
	\arrow[from=1-2, to=2-2]
	\arrow[from=2-1, to=2-2]
	\arrow["{\pi_1}", from=1-2, to=1-3]
	\arrow["g", from=2-2, to=2-3]
	\arrow[from=1-3, to=2-3]
\end{tikzcd}\end{equation} which defines the space $R$ and map $g$, where $\pi_1$ is the projection into the first factor. Observe that the map $i_0 \simeq *$ and so the outer square of (\ref{verticesnatural}) is of the form in Lemma ~\ref{coordinatehtpyequiv}. 

Now define $P_l$ by the pushout \[\begin{tikzcd}
	{x_{l-1}} & {x_{l-1} \cdot x_l} \\
	{P_{l-1}} & {P_l.}
	\arrow[from=2-1, to=2-2]
	\arrow[from=1-2, to=2-2]
	\arrow[from=1-1, to=2-1]
	\arrow[from=1-1, to=1-2]
\end{tikzcd}\] By Proposition ~\ref{inducepushout}, this induces a pushout of polyhedral products \[\begin{tikzcd}
	{\prod\limits_{i=0}^{l-2} X_i \times CX_{l-1} \times X_l} & {\prod\limits_{i=0}^{l-2}X_i \times CX_{l-1} \times CX_{l}} \\
	{(CX,X)^{P_{l-1}} \times X_l} & {(CX,X)^{P_{l}}.}
	\arrow[from=2-1, to=2-2]
	\arrow["{j \times id_{X_l}}", from=1-1, to=2-1]
	\arrow[from=1-1, to=1-2]
	\arrow[from=1-2, to=2-2]
\end{tikzcd}\] Consider the iterated pushout \begin{equation}\label{pathnatural}\begin{tikzcd}
	{\left(\prod\limits_{i=0}^{l-2}X_i \times CX_{l-1}\right) \times X_l} & {\left(\prod\limits_{i=0}^{l-2}X_i \times CX_{l-1}\right) \times CX_l} & {\prod\limits_{i=0}^{l-2}X_i \times CX_{l-1}} \\
	{(CX,X)^{P_{l-1}} \times X_l} & {(CX,X)^{P_l}} & {R'}
	\arrow["{j \times id_{X_l}}", from=1-1, to=2-1]
	\arrow[hook, from=1-1, to=1-2]
	\arrow[from=2-1, to=2-2]
	\arrow[from=1-2, to=2-2]
	\arrow["{\pi_1'}", from=1-2, to=1-3]
	\arrow["g'", from=2-2, to=2-3]
	\arrow[from=1-3, to=2-3]
\end{tikzcd}\end{equation} which defines the space $R'$ and map $g'$, where $\pi_1'$ is the projection into the first factor. The inclusion $j$ factors as the composite \[\prod_{i=0}^{l-2}X_i \times CX_{l-1} \xhookrightarrow{k_0} (CX,X)^{\{v_0,\cdots,v_{l-1}\}} \xhookrightarrow{k_1} (CX,X)^{P_{l-1}}.\] By Lemma ~\ref{conenullhtpy}, $k_0$ is null homotopic, realised by a homotopy \[H': \prod_{i=0}^{l-2}X_i \times CX_{l-1} \times I \rightarrow (CX,X)^{\{v_0,\cdots,v_{l-1}\}}.\] Therefore $j$ is null homotopic, realised by a homotopy $H = k_1 \circ H': \prod_{i=0}^{l-2}X_i \times CX_{l-1} \times I \rightarrow (CX,X)^{P_{l-1}}$. Hence, the outer square of (\ref{pathnatural}) is of the form in Lemma ~\ref{coordinatehtpyequiv}.

By Lemma ~\ref{conenullhtpy}, there is a commutative diagram \begin{equation}\label{homotopycommute}\begin{tikzcd}
	{X_0 \times I} & {CX_0} \\
	{\prod\limits_{i=0}^{l-2} X_i \times CX_{l-1} \times I} & {(CX,X)^{P_{l-1}},}
	\arrow[hook, from=1-2, to=2-2]
	\arrow[hook, from=1-1, to=2-1]
	\arrow["H|_{X_0}", from=1-1, to=1-2]
	\arrow["{H}", from=2-1, to=2-2]
\end{tikzcd}\end{equation} as in the right square of (\ref{eq:pushoutsfornatural}). Therefore, with the outer square of (\ref{verticesnatural}) and (\ref{pathnatural}) for (\ref{eq:pushoutsfornatural}) and (\ref{homotopycommute}) for (\ref{eq:diagramsfornatural}) the hypotheses of Lemma ~\ref{naturalhtpyequiv} are satisfied, and the homotopy equivalence in Lemma ~\ref{coordinatehtpyequiv} is natural for maps $R \rightarrow R'$. In particular, there exists a homotopy commutative diagram \begin{equation}\label{initialnaturaldiag}\begin{tikzcd}
	{R} & {(X_0*X_l) \vee (CX_0 \rtimes X_l)} \\
	{R'} & {\left(\left(\prod\limits_{i=0}^{l-2}X_i \times CX_{l-1}\right)*X_l\right) \vee ((CX,X)^{P_{l-1}} \rtimes X_l).}
	\arrow[from=1-1, to=2-1]
	\arrow["\simeq", from=1-1, to=1-2]
	\arrow[hook, from=1-2, to=2-2]
	\arrow["\simeq", from=2-1, to=2-2]
\end{tikzcd}\end{equation} Since $CX_0$ is naturally homotopy equivalent to a point, so is $CX_0 \rtimes X_l$. Also, $CX_{l-1}$ is naturally homotopy equivalent to a point, and so we can contract $CX_0 \rtimes X_l$ and $CX_{l-1}$ in (\ref{initialnaturaldiag}) to obtain a homotopy commutative diagram \begin{equation}\label{Rhomcomm}\begin{tikzcd}
	{R} & {X_0*X_l} \\
	{R'} & {\left(\left(\prod\limits_{i=0}^{l-2}X_i\right)*X_l\right) \vee ((CX,X)^{P_{l-1}} \rtimes X_l)}
	\arrow[from=1-1, to=2-1]
	\arrow["\simeq", from=1-1, to=1-2]
	\arrow[hook, from=1-2, to=2-2]
	\arrow["\simeq", from=2-1, to=2-2]
\end{tikzcd}\end{equation} where $X_0 * X_l$ is included into the left wedge summand.

The map $R \rightarrow R'$ is a pushout map obtained by mapping the outer square of (\ref{verticesnatural}) to the outer square of (\ref{pathnatural}) via the maps \[X_0 \hookrightarrow \prod\limits_{i=0}^{l-2} X_i \times CX_{l-1}, \text{ } CX_0 \hookrightarrow (CX,X)^{P_{l-1}}, \text{ } X_l \xrightarrow{=} X_l\] where the first two maps are inclusions and the last map is the identity. The uniqueness of strict pushout maps implies that this is the same map as the one obtained by mapping the right square of (\ref{verticesnatural}) to the right square of (\ref{pathnatural}) via \[X_0 \hookrightarrow \prod\limits_{i=0}^{l-2} X_i \times CX_{l-1}, \text{ } (CX,X)^{\{v_0,v_l\}} \hookrightarrow (CX,X)^{P_{l}}, \text{ } CX_l \xrightarrow{=} CX_l\] where the first two maps are inclusions and the last map is the identity. Thus there exists a commutative diagram \begin{equation}\label{PPtoR}\begin{tikzcd}
	{(CX,X)^{\{v_0,v_l\}}} & R \\
	{(CX,X)^{P_l}} & {R'.}
	\arrow[hook, from=1-1, to=2-1]
	\arrow["{g'}", from=2-1, to=2-2]
	\arrow["g", from=1-1, to=1-2]
	\arrow[from=1-2, to=2-2]
\end{tikzcd}\end{equation} Since $\pi_1$ and $\pi_1'$ in the right squares of (\ref{verticesnatural}) and (\ref{pathnatural}) respectively are homotopy equivalences, so are $g$ and $g'$. Therefore, combining (\ref{Rhomcomm}) and (\ref{PPtoR}) we obtain a homotopy commutative diagram \[\begin{tikzcd}
	{(CX,X)^{\{v_0,v_l\}}} & {X_0 *X_l} \\
	{(CX,X)^{P_l}} & {\left(\left(\prod\limits_{i=0}^{l-2} X_i\right) * X_l\right) \vee ((CX,X)^{P_{l-1}} \rtimes X_l).}
	\arrow[hook, from=1-1, to=2-1]
	\arrow[hook, from=1-2, to=2-2]
	\arrow["\simeq", from=1-1, to=1-2]
	\arrow["\simeq", from=2-1, to=2-2]
\end{tikzcd}\]
\end{proof}

Recall $F'$ is the homotopy fibre of the inclusion $(C\Omega X,\Omega X)^{\{v_0,v_l\}} \hookrightarrow (C \Omega X,\Omega X)^{P_l}$. Applying Lemma ~\ref{naturalendpointinclusion}, the map $(C\Omega X,\Omega X)^{\{v_0,v_l\}} \hookrightarrow (C\Omega X,\Omega X)^{P_l}$ can be written up to homotopy equivalences as the inclusion \begin{equation}\label{initialnaturalinclusion} X_0 * X_l \hookrightarrow \left(\left(\prod\limits_{i=0}^{l-2} X_i\right) * X_l\right) \vee \left((C\Omega X,\Omega X)^{P_{l-1}} \rtimes X_l\right).\end{equation} For spaces, $Y_1,\cdots,Y_m$ and $I = \{i_1,\cdots,i_k\} \subseteq [m]$, denote by $\widehat{Y}^I$ the smash product $Y_{i_1} \wedge \cdots \wedge Y_{i_k}$. There is a well-known natural homotopy equivalence \cite{J,Mi} \[\Sigma(Y_1 \times \cdots \times Y_m) \simeq \Sigma\left(\bigvee\limits_{I \subseteq [m]} \widehat{Y}^I\right).\] Using this, there are natural homotopy equivalences \[\left(\prod\limits_{i=0}^{l-2} X_i\right) * X_l \simeq \Sigma \left(\prod\limits_{i=0}^{l-2} X_i\right) \wedge \Omega X_l \simeq \Sigma \left(\bigvee\limits_{I \subseteq [l-2]} \left(\widehat{\Omega X}^I \wedge \Omega X_l\right)\right)\]\[\Sigma \left(\Omega X_0 \wedge \Omega X_l\right) \vee \Sigma\left(\bigvee\limits_{\substack{I \subseteq [l-2] \\ I \neq \{0\}}} \left(\widehat{\Omega X}^I \wedge \Omega X_l\right)\right).\] Therefore, (\ref{initialnaturalinclusion}), can be rewritten up to homotopy equivalences as the inclusion \[\Omega X_0 * \Omega X_l \hookrightarrow (\Omega X_0 * \Omega X_l) \vee C\] where \[C \simeq \Sigma\left(\bigvee\limits_{\substack{I \subseteq [l-2] \\ I \neq \{0\}}} \left(\widehat{\Omega X}^I \wedge \Omega X_l\right)\right) \vee \left((C\Omega X,\Omega X)^{P_{l-1}} \rtimes X_l\right)\] and $(C\Omega X,\Omega X)^{P_{l-1}}$ can be identified by Lemma ~\ref{Pathexplicithtpyequiv}. In particular, since $(C\Omega X,\Omega X)^{P_{l-1}} \in \mathcal{W}$, it follows that $C \in \mathcal{W}$. Therefore, by Proposition ~\ref{inclusionfibre}, \[F' \simeq \Omega\left(C \rtimes \Omega(\Omega X_0 * \Omega X_l)\right).\] Let $Y$ be a pointed topological space. Denote by $Y^{\wedge n}$ the $n$-fold smash product of $Y$. Putting all this together, we obtain the following result.

\begin{theorem}
\label{bookgraphdecomp}
Let $B(l,2l,p)$ be the generalised book graph with $p \geq 2$ and $l \geq 2$. Then there is a homotopy equivalence \[\Omega \ux^{B(l,2l,p)} \simeq \prod\limits_{i=0}^l \Omega X \times \Omega (C\Omega X,\Omega X)^{P_l} \times \Omega \left(\bigvee\limits_{i=1}^p \Sigma F\right)\] where if $l = 2$, $F \simeq \Omega X$ or if $l \geq 3$, $F \simeq \prod_{i=1}^{l-1}\Omega X \times \Omega\left(C \rtimes \Omega(\Omega X * \Omega X)\right)$ and \[C \simeq \Sigma\left(\bigvee_{\substack{I \subseteq [l-2] \\ I \neq \{0\}}} \Omega X^{\wedge |I|+1}\right) \vee \left((C\Omega X,\Omega X)^{P_{l-1}} \rtimes \Omega X\right).\] In particular, $(C\Omega X,\Omega X)^{P_l},\bigvee_{i=1}^p \Sigma F \in \mathcal{W}$.
\end{theorem}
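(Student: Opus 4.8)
The plan is to assemble the statement from the computations already carried out in this subsection, treating $l=2$ and $l\geq 3$ separately and then dispatching the closure statement about $\mathcal{W}$. Writing $B(l,2l,p)$ as the pushout of $p+1$ copies of $P_l$ glued along their end vertices $\{v_0,v_l\}$ and applying Theorem \ref{polysymDecomp} with $K_i=P_l$, $L_1=L_2=\{v_0,v_l\}$ and all $\psi,\phi_{1,j}$ equal to the identity gives $\Omega\ux^{B(l,2l,p)}\simeq\Omega\ux^{P_l}\times\Omega\bigl(\bigvee_{i=1}^{p}\Sigma F\bigr)$, where $F$ is the homotopy fibre of the inclusion $\ux^{\{v_0,v_l\}}\hookrightarrow\ux^{P_l}$; substituting $\Omega\ux^{P_l}\simeq\bigl(\prod_{i=0}^{l}\Omega X\bigr)\times\Omega(C\Omega X,\Omega X)^{P_l}$ from Theorem \ref{coneloop} produces the displayed homotopy equivalence. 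It then remains to identify $F$ and to establish the final sentence.

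For $F$: when $l=2$ the join decomposition $P_2=\{v_0,v_2\}*\{v_1\}$ gives $\ux^{P_2}\cong\ux^{\{v_0,v_2\}}\times\ux^{\{v_1\}}$, so $F$ is the homotopy fibre of the inclusion $X\vee X\hookrightarrow(X\vee X)\times X$ of the first factor, which is $\Omega X$. When $l\geq 3$, I would form the homotopy fibration diagram comparing the fibrations of Theorem \ref{coneloop} for $P_l$ and for $\{v_0,v_l\}$; since the looped projection $\Omega\ux^{P_l}\to\prod_{i=0}^{l}\Omega X$ has a right homotopy inverse by Theorem \ref{coneloop} and the remaining map is a projection of product factors, the composite appearing in Lemma \ref{Fibrethomotopyequiv} has a right homotopy inverse, and that lemma gives $F\simeq\prod_{i=1}^{l-1}\Omega X\times F'$, where $F'$ is the homotopy fibre of $(C\Omega X,\Omega X)^{\{v_0,v_l\}}\hookrightarrow(C\Omega X,\Omega X)^{P_l}$. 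By Lemma \ref{naturalendpointinclusion} this inclusion is, up to homotopy equivalence, the inclusion of $\Omega X_0*\Omega X_l$ into $(\Omega X_0*\Omega X_l)\vee C$ as the left wedge summand, once one uses the James--Milnor splitting of $\bigl(\prod_{i=0}^{l-2}\Omega X_i\bigr)*\Omega X_l$ to peel off $\Omega X_0*\Omega X_l$ and collect the remainder into $C$; Proposition \ref{inclusionfibre} then gives $F'\simeq\Omega\bigl(C\rtimes\Omega(\Omega X*\Omega X)\bigr)$, with $(C\Omega X,\Omega X)^{P_{l-1}}$ identified by Lemma \ref{Pathexplicithtpyequiv}. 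This is precisely the asserted description of $F$ and $C$.

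Finally, $(C\Omega X,\Omega X)^{P_l}\in\mathcal{W}$ is Lemma \ref{Pathexplicithtpyequiv}, and since $\mathcal{W}$ is closed under wedge sums it is enough to show $\Sigma F\in\mathcal{W}$; for $l=2$ this is immediate since $\Sigma F\simeq\Sigma\Omega X$. For $l\geq 3$ I would expand $\Sigma F\simeq\Sigma\bigl(\prod_{i=1}^{l-1}\Omega X\times F'\bigr)$ by iterating $\Sigma(A\times B)\simeq\Sigma A\vee\Sigma B\vee\Sigma(A\wedge B)$, reducing to showing that suspensions of smash products of copies of $\Omega X$ with $\Sigma F'$ lie in $\mathcal{W}$; the purely $\Omega X$ summands lie in $\mathcal{W}$ by definition, and for those involving $F'=\Omega\bigl(C\rtimes\Omega(\Omega X*\Omega X)\bigr)$ one uses that $C\in\mathcal{W}$ is a wedge of suspensions, hence a suspension, so $C\rtimes\Omega(\Omega X*\Omega X)\simeq C\vee\bigl(C\wedge\Omega(\Omega X*\Omega X)\bigr)$ is again a suspension; the James splitting $\Sigma\Omega\Sigma Y\simeq\Sigma\bigvee_{n\geq1}Y^{\wedge n}$ applied to it, together with $\Omega X*\Omega X\simeq\Sigma(\Omega X\wedge\Omega X)$ and $C\in\mathcal{W}$, then exhibits everything as a wedge of suspensions of smashes of copies of $\Omega X$. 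Hence $\Sigma F\in\mathcal{W}$ and $\bigvee_{i=1}^{p}\Sigma F\in\mathcal{W}$. All the structural homotopy theory is already in hand, so I expect the only genuine obstacle to be this final bookkeeping: organising cleanly the closure of $\mathcal{W}$ under the operations (suspension of a product, join and half-smash with $\Omega X$, and one loop-suspension) that arise when $\Sigma F$ is unwound, rather than tracking it summand by summand through repeated applications of the two splitting theorems.
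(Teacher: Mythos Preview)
Your proposal is correct and follows essentially the same approach as the paper: assemble the decomposition from Theorem~\ref{polysymDecomp} and Theorem~\ref{coneloop}, identify $F$ via the $l=2$ join trick and for $l\geq 3$ via Lemma~\ref{Fibrethomotopyequiv}, Lemma~\ref{naturalendpointinclusion} and Proposition~\ref{inclusionfibre}, and then verify the $\mathcal{W}$ closure by writing $C\rtimes\Omega(\Omega X*\Omega X)$ as a suspension and applying the James splitting. The only cosmetic difference is that the paper invokes Hilton--Milnor to decompose $\Omega\bigl(C\rtimes\Omega(\Omega X*\Omega X)\bigr)$ as a product before suspending, whereas you apply the James splitting $\Sigma\Omega\Sigma Y\simeq\bigvee_{n\geq 1}\Sigma Y^{\wedge n}$ directly; both routes land in $\mathcal{W}$ for the same reasons.
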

\begin{proof}
We have shown there is a homotopy equivalence \[\Omega \ux^{B(l,2l,p)} \simeq \Omega\ux^{P_l} \times \Omega \left(\bigvee\limits_{i=1}^p \Sigma F\right)\] where if $l=2$, $F \simeq \Omega X$ or if $l \geq 3$, $F \simeq \prod_{i=1}^{l-1}\Omega X \times \Omega\left(C \rtimes \Omega(\Omega X * \Omega X)\right)$ with $C \in \mathcal{W}$. By Proposition ~\ref{coneloop}, \[\Omega\ux^{P_l} \simeq \prod\limits_{i=1}^l \Omega X \times \Omega (C \Omega X,\Omega X)^{P_l}\] where $(C \Omega X,\Omega X)^{P_l} \in \mathcal{W}$ by Lemma ~\ref{Pathexplicithtpyequiv}. 

If $l=2$, clearly $\Sigma F \in \mathcal{W}$. Consider the case $l \geq 3$. There is a homotopy equivalence \[\Sigma F \simeq \Sigma \left(\prod_{i=1}^{l-1}\Omega X\right) \vee \Sigma \left(\Omega\left(C \rtimes \Omega(\Omega X * \Omega X)\right)\right) \vee \Sigma \left(\left(\prod_{i=1}^{l-1}\Omega X\right) \wedge \left(C \rtimes \Omega(\Omega X * \Omega X)\right)\right).\] Iterating the homotopy equivalence $\Sigma (X \times Y) \simeq \Sigma X \vee \Sigma Y \vee \Sigma (X \wedge Y)$ where $X$ and $Y$ are pointed spaces, it follows that $\Sigma \left(\prod_{i=1}^{l-1}\Omega X\right) \in \mathcal{W}$. It suffices to show that $\Sigma \left(\Omega\left(C \rtimes \Omega(\Omega X * \Omega X)\right)\right) \in \mathcal{W}$ since if so, then by shifting the suspension coordinate, it follows that \[\Sigma \left(\left(\prod_{i=1}^{l-1}\Omega X\right) \wedge \left(C \rtimes \Omega(\Omega X * \Omega X)\right)\right) \in \mathcal{W}.\]

Since $C \in \mathcal{W}$, it is a suspension and so $C = \Sigma C'$ where $C'$ is a wedge of smashes of $\Omega X$'s. In particular, there is a homotopy equivalence \[C \rtimes \Omega(\Omega X * \Omega X) \simeq C \vee (\Sigma C' \wedge \Omega(\Omega X * \Omega X)) \simeq C \vee (C' \wedge \Sigma \Omega(\Omega X * \Omega X)).\] By the James construction \cite{J}, $\Sigma \Omega(\Omega X * \Omega X)) \in \mathcal{W}$ and therefore, by shifting the suspension coordinate, it follows that $C \vee (C' \wedge \Sigma \Omega(\Omega X * \Omega X)) \in \mathcal{W}$. Hence, $C \rtimes \Omega(\Omega X * \Omega X) \in \mathcal{W}$. The Hilton-Milnor theorem \cite{Mi} implies that $\Omega (C \rtimes \Omega(\Omega X * \Omega X))$ is a finite type product where each term is a loop of the suspension of smashes of $\Omega X$'s. Iterating $\Sigma (X \times Y) \simeq \Sigma X \vee \Sigma Y \vee \Sigma (X \wedge Y)$ and the James construction, we obtain $\Sigma \left(\Omega\left(C \rtimes \Omega(\Omega X * \Omega X)\right)\right) \in \mathcal{W}$.
\end{proof}

For example, let $X=\mathbb{C}P^{\infty}$. In the case that $X = \mathbb{C}P^\infty$, $\Omega X \simeq S^1$ and so spaces in $\mathcal{W}$ are homotopy equivalent to a wedge of simply connected spheres. The graph $B(l,2l,p)$ is a flag complex, and so by \cite{PT}, it is known that the loop space of the corresponding moment-angle complex is homotopy equivalent to a product of spheres and loops on spheres. This also implies that the loop space of the Davis-Januszkiewicz space decomposes in the same way. However, the technique used does not give any indication as to how many spheres are in the decomposition. We can use Theorem ~\ref{bookgraphdecomp} to give an explicit decomposition. 

\begin{theorem}
\label{Bookgraphdecompdavis}
Let $B(l,2l,p)$ be the generalised book graph with $p \geq 2$ and $l \geq 2$. Then there is a homotopy equivalence \[\Omega DJ_{B(l,2l,p)} \simeq \prod\limits_{i=0}^l S^1 \times \Omega \mathcal{Z}_{P_l} \times  \Omega \left(\bigvee\limits_{i=1}^p \Sigma F\right)\] where if $l = 2$, $F \simeq S^1$ or if $l \geq 3$, $F \simeq \prod_{i=1}^{l-1}S^1 \times \Omega\left(C \rtimes \Omega S^3\right)$ and \[C \simeq \bigvee_{\substack{I \subseteq [l-2] \\ I \neq \{0\}}} S^{|I|+2} \vee \left(\mathcal{Z}_{P_{l-1}} \rtimes S^1\right).\] In particular, $\mathcal{Z}_{P_l},\bigvee_{i=1}^p \Sigma F$ are homotopy equivalent to a wedge of simply connected spheres.
\qedno
\end{theorem}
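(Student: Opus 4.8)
The plan is to obtain Theorem~\ref{Bookgraphdecompdavis} as the specialisation of Theorem~\ref{bookgraphdecomp} to $X = \mathbb{C}P^\infty$, the only real work being to translate each constituent space. Three identifications drive this. First, since $\mathbb{C}P^\infty \simeq BS^1$ there is a homotopy equivalence $\Omega\mathbb{C}P^\infty \simeq S^1$. Second, by definition $(\mathbb{C}P^\infty,\ast)^K = DJ_K$, so $(X,\ast)^{B(l,2l,p)} = DJ_{B(l,2l,p)}$. Third, the cone $C S^1$ on the circle is homeomorphic to $D^2$ with $S^1 \hookrightarrow C S^1$ the boundary inclusion, so the pair $(C\Omega\mathbb{C}P^\infty, \Omega\mathbb{C}P^\infty)$ is homotopy equivalent to $(D^2, S^1)$; by homotopy invariance of the polyhedral product in its defining pairs, $(C\Omega\mathbb{C}P^\infty, \Omega\mathbb{C}P^\infty)^K \simeq (D^2,S^1)^K = \mathcal{Z}_K$ for every simplicial complex $K$, which I would apply to $K = P_l$ and $K = P_{l-1}$.

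I would then substitute these into the decomposition of Theorem~\ref{bookgraphdecomp}. The factor $\prod_{i=0}^l \Omega X$ becomes $\prod_{i=0}^l S^1$, the factor $\Omega(C\Omega X,\Omega X)^{P_l}$ becomes $\Omega\mathcal{Z}_{P_l}$, and $\Omega(\bigvee_{i=1}^p \Sigma F)$ is kept with $F$ rewritten: for $l=2$ one has $F \simeq \Omega X = S^1$, while for $l \geq 3$ one uses $\Omega X * \Omega X = S^1 * S^1 \simeq \Sigma(S^1 \wedge S^1) = S^3$ (so $\Omega(\Omega X * \Omega X) \simeq \Omega S^3$) together with $\prod_{i=1}^{l-1}\Omega X = \prod_{i=1}^{l-1}S^1$ to get $F \simeq \prod_{i=1}^{l-1}S^1 \times \Omega(C \rtimes \Omega S^3)$. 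For the space $C$ appearing there, a smash of $|I|+1$ copies of $\Omega X = S^1$ is the sphere $S^{|I|+1}$, its suspension is $S^{|I|+2}$, and suspension distributes over wedges; combined with $(C\Omega X,\Omega X)^{P_{l-1}} \simeq \mathcal{Z}_{P_{l-1}}$ and $\Omega X = S^1$ this gives $C \simeq \bigvee_{I \subseteq [l-2],\, I \neq \{0\}} S^{|I|+2} \vee (\mathcal{Z}_{P_{l-1}} \rtimes S^1)$, exactly as stated.

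Finally I would check the last sentence of the statement. Theorem~\ref{bookgraphdecomp} already records that $(C\Omega X,\Omega X)^{P_l}$ and $\bigvee_{i=1}^p \Sigma F$ lie in $\mathcal{W}$, the class of spaces homotopy equivalent to a wedge each of whose summands is a suspension of a smash product of copies of the spaces $\Omega X_i$. When every $\Omega X_i = S^1$, any such summand has the form $\Sigma(S^1 \wedge \cdots \wedge S^1) = \Sigma S^k = S^{k+1}$ with $k \geq 1$, hence is a simply connected sphere, so every space in $\mathcal{W}$ is a wedge of simply connected spheres; in particular so are $\mathcal{Z}_{P_l} \simeq (C\Omega X,\Omega X)^{P_l}$ and $\bigvee_{i=1}^p \Sigma F$. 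I expect no genuine obstacle here beyond bookkeeping through the substitutions; the only point worth a second look is confirming that the smashes occurring in the relevant members of $\mathcal{W}$ always have at least one factor under the suspension, so that the spheres produced are genuinely simply connected rather than circles — which is visible from the explicit decompositions in Lemma~\ref{Pathexplicithtpyequiv} and of $C$ above.
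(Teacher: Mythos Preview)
Your proposal is correct and matches the paper's own treatment: the theorem is stated with a \qedno and the preceding paragraph simply observes that it is the specialisation of Theorem~\ref{bookgraphdecomp} to $X=\mathbb{C}P^\infty$, using $\Omega\mathbb{C}P^\infty\simeq S^1$ so that spaces in $\mathcal{W}$ become wedges of simply connected spheres. Your substitutions and the check of the final sentence are exactly the bookkeeping the paper leaves to the reader.
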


\subsection*{Data Availability}
Data sharing not applicable to this article as no datasets were generated or analysed during the current study.

\bibliographystyle{amsalpha}

\end{document}